\definecolor{darkblue}{rgb}{0,0,0.68}
\newcommand\blfootnote[1]{%
  \begingroup
  \renewcommand\thefootnote{}\footnote{#1}%
  \addtocounter{footnote}{-1}%
  \endgroup
}
\newcommand{\RR}{\mathbb{R}}
\newcommand{\ZZ}{\mathbb{Z}}
\newcommand{\NN}{\mathbb{N}}
\newcommand{\cl}{\mathrm{cl}}
\newcommand{\Even}{\mathrm{Even}} 
\newcommand{\Odd}{\mathrm{Odd}} 
\newcommand{\ran}[1]{{{\rm{ran}}(#1)}}
\newcommand{\dom}[1]{{{\rm{dom}}(#1)}}
\newtheorem{theorem}{Theorem}[section]
\newtheorem{lemma}[theorem]{Lemma}
\newtheorem{question}[theorem]{Question}
\newtheorem{claim}[theorem]{Claim}
\newtheorem*{claim*}{Claim}
\newtheorem{subclaim}[theorem]{Subclaim}
\newtheorem*{subclaim*}{Subclaim}
\newtheorem{case}{Case}
\newtheorem*{case*}{Case}
\newtheorem{subcase}{Subcase}
\newtheorem*{subcase*}{Subcase}
\newtheorem{subsubcase}{Subsubcase}
\newtheorem*{subsubcase*}{Subsubcase}
\theoremstyle{definition}
\newtheorem{definition}[theorem]{Definition}
\theoremstyle{remark}
\newtheorem{remark}[theorem]{Remark}
\newenvironment{enumerate-(a)}{\begin{enumerate}[label={\upshape (\alph*)}, leftmargin=2pc]}{\end{enumerate}}
\newenvironment{enumerate-(a)-r}{\begin{enumerate}[label={\upshape (\alph*)}, leftmargin=2pc,resume]}{\end{enumerate}}
\newenvironment{enumerate-(A)}{\begin{enumerate}[label={\upshape (\Alph*)}, leftmargin=2pc]}{\end{enumerate}}
\newenvironment{enumerate-(A)-r}{\begin{enumerate}[label={\upshape (\Alph*)}, leftmargin=2pc,resume]}{\end{enumerate}}
\newenvironment{enumerate-(i)}{\begin{enumerate}[label={\upshape (\roman*)}, leftmargin=2pc]}{\end{enumerate}}
\newenvironment{enumerate-(i)-r}{\begin{enumerate}[label={\upshape (\roman*)}, leftmargin=2pc,resume]}{\end{enumerate}}
\newenvironment{enumerate-(I)}{\begin{enumerate}[label={\upshape (\Roman*)}, leftmargin=2pc]}{\end{enumerate}}
\newenvironment{enumerate-(I)-r}{\begin{enumerate}[label={\upshape (\Roman*)}, leftmargin=2pc,resume]}{\end{enumerate}}
\newenvironment{enumerate-(1)}{\begin{enumerate}[label={\upshape (\arabic*)}, leftmargin=2pc]}{\end{enumerate}}
\newenvironment{enumerate-(1)-r}{\begin{enumerate}[label={\upshape (\arabic*)}, leftmargin=2pc,resume]}{\end{enumerate}}
\begin{document}

\makeatletter

\makeatother



\title{Continuous reducibility and dimension}

\author{Philipp Schlicht}

\address{Mathematisches Institut, 
              Universit\"at of Bonn\\ 
              Endenicher Allee 60, 
              53115 Bonn, Germany} 
\email{schlicht@math.uni-bonn.de}




\title{Continuous reducibility and dimension of metric spaces
}\blfootnote{This paper supersedes an earlier preprint, where the main result of this paper was proved for Polish spaces, with an unrelated proof.}

\date{\today}


\maketitle

\begin{abstract} 
If $(X,d)$ is a Polish metric space of dimension $0$, then by Wadge's lemma, no more than two Borel subsets of $X$ are incomparable with respect to continuous reducibility. In contrast, our main result shows that for any metric space $(X,d)$ of positive dimension, 
there are uncountably many Borel subsets of $(X,d)$ that are pairwise incomparable with respect to continuous reducibility. 

In general, the reducibility that is given by the collection of continuous functions on a topological space $(X,\tau)$ is called the \emph{Wadge quasi-order} for $(X,\tau)$. As an application of the main result, we show that this quasi-order, restricted to the Borel subsets of a Polish space $(X,\tau)$, is a \emph{well-quasiorder (wqo)} if and only if $(X,\tau)$ has dimension $0$. 

Moreover, we give further examples of applications of the construction of graph colorings that is used in the proofs. 
\end{abstract}

\tableofcontents 

\section{Introduction}


The \emph{Wadge quasi-order} on the subsets of the Baire space 
of all functions $f\colon\NN\rightarrow \NN$ 
is an important notion that is used to fit definable sets into a hierarchy of complexity.  Its importance comes from the fact that it defines the finest known hierarchy on various classes of definable subsets of the Baire space. For instance, it refines the difference hierarchy, the Borel hierarchy and the projective hierarchy. 

The structure of the Wadge quasi-order on the Baire space and its closed subsets has therefore been an object of intensive research (see e.g. \cite{MR2906999, MR730585, MR2279651}). 
Moreover, many structural results could even be extended to many other classes of functions (see e.g. \cite{MR1992531, MR2499419, MR2605897, MR2601013, 
MR3164747, MR3308051}). 

In this paper, we study the Wadge quasi-order on the class of Borel subsets of arbitrary metric spaces. 
By a Borel subset of a topological space, we mean an element of the least $\sigma$-algebra that contains the open sets. 
We define the Wadge quasi-order for arbitrary topological spaces as follows.

\begin{definition} 
Suppose that $(X,\tau)$ is a topological space and $A$, $B$ are subsets of $X$. 
\begin{enumerate-(a)} 
\item 
$A$ is \emph{reducible} to $B$ ($A\leq_{(X,\tau)} B$) if $A=f^{-1}[B]$ for some continuous map $f\colon X\rightarrow X$. 
\item 
$A$, $B$ are \emph{equivalent} ($A\sim_{(X,\tau)} B$)
if $A\leq_{(X,\tau)} B$ and $B\leq_{(X,\tau)} A$. 
\item 
$A$, $B$ are \emph{comparable} 
if $A\leq_{(X,\tau)} B$ or $B\leq_{(X,\tau)} A$, and otherwise 
\emph{incomparable}. 
\end{enumerate-(a)}
\end{definition}

The following principle states a further important property of these quasi-orders. 

\begin{definition} 
Assuming that $(X,\tau)$ is a topological space, the \emph{semi-linear ordering principle} $\mathsf{SLO}_{(X,\tau)}$ states that for all Borel subsets $A$, $B$ of $X$, $A$ is reducible to $B$, or $B$ is reducible to $X\setminus A$. 
\end{definition} 

It is easy to see that $\mathsf{SLO}_{(X,\tau)}$ implies that no more than two Borel subsets of $X$ can be incomparable, 
Moreover, Woodin observed that $\mathsf{SLO}_{\RR}$ fails (see \cite[Remark 9.26]{Woodin_2010}, \cite[Example 3]{Andretta_SLO}) and thus it is natural to ask the following question. 

\begin{question} \label{main question} 
Under which conditions on a Polish space $(X,\tau)$ does $\mathsf{SLO}_{(X,\tau)}$ fail? 
\end{question} 

The principle 
$\mathsf{SLO}$ holds for any Polish space that is homeomorphic to a closed subset of the Baire space, since the proof of Wadge's lemma (see e.g. \cite[Section 2.3]{Andretta_SLO}) works for these spaces. 
Moreover, these Polish space are known to be exactly the ones with dimension $0$, which is defined as follows. 

\begin{definition} 
A topological space $(X,\tau)$ has \emph{dimension} $0$ if 
for every $x$ in $X$ and every open set $U$ subset of $X$ containing $x$, there is a subset of $U$ containing $x$ that is both open and closed. 
Moreover, the space has \emph{positive dimension} if it does not have dimension $0$. 
\end{definition} 

This is usually denoted by having \emph{small inductive dimension $0$}, but note that for separable metric spaces, it is equivalent to the condition that other standard notions of dimension, such as the large inductive dimension $0$ or the Lebesgue covering dimension, are equal to $0$ by \cite[Theorem 7.3.2]{MR1039321}. 
However, not every totally disconnected Polish space has dimension $0$, 
since for instance the complete Erd\"os space \cite{MR2488452} has the former property, but not the latter. 

The following is the main result of this paper, which is proved in Theorem \ref{main theorem again} below.  

\begin{theorem} \label{main theorem} 
For any metric space $(X,d)$ of positive dimension, there are uncountably many Borel subsets of $(X,d)$ that are pairwise incomparable with respect to continuous reducibility. 
\end{theorem}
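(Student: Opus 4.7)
The plan is to encode an uncountable almost-disjoint family of subsets of $\omega$ into Borel subsets of $X$, using positive dimension to arrange enough local rigidity that continuous reductions cannot mismatch almost-disjoint indices. Since $(X,d)$ has positive dimension, I fix a point $x_{*}\in X$ and an open neighborhood $U$ of $x_{*}$ such that no clopen subset of $X$ containing $x_{*}$ lies in $U$. Writing $F:=X\setminus U$, every clopen neighborhood of $x_{*}$ meets $F$, and this is the only feature of $(X,d)$ I intend to use.

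The first step is to produce a convergent witness at $x_{*}$: in the easy case $x_{*}\in\overline{F}$, this is a sequence $(y_n)$ in $F$ such that every clopen neighborhood of $x_{*}$ contains cofinitely many $y_n$; in general one iterates the non-clopen-separability inside a nested sequence of neighborhoods of $x_{*}$ to extract an analogous object. The second step is to attach to each $n\in\omega$ a Borel gadget $B_n$ localized near the $n$th witness, chosen from a proper countable colouring of an auxiliary graph whose edges record configurations that would permit a continuous function to either scramble the indices of the gadgets or create a gadget from nothing. The desired \emph{rigidity} property is that for every continuous $f\colon X\to X$ and every $S,T\subseteq\omega$ satisfying $f^{-1}[\{x_{*}\}\cup\bigcup_{m\in T}B_m]=\{x_{*}\}\cup\bigcup_{n\in S}B_n$, cofinitely many $n\in S$ satisfy $n\in T$. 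Establishing this is where the graph-colouring machinery advertised in the abstract enters.

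With the rigid family in hand, fix an almost-disjoint family $\{S_\alpha:\alpha<\omega_1\}$ of infinite subsets of $\omega$ (which exists in ZFC) and define the Borel sets
\[ A_\alpha \;=\; \{x_{*}\}\cup\bigcup_{n\in S_\alpha} B_n. \]
For $\alpha\neq\beta$, any continuous reduction $A_\alpha\leq_{(X,d)} A_\beta$ would, by rigidity, force cofinitely many $n\in S_\alpha$ to lie in $S_\beta$, contradicting $|S_\alpha\cap S_\beta|<\infty$. Hence the $A_\alpha$ form an uncountable antichain under continuous reducibility.

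The main obstacle is the rigidity step. The complete Erd\H{o}s space shows that positive-dimensional metric spaces can be totally disconnected, so rigidity cannot be extracted from connected subsets of $X$; it must instead be squeezed out of the non-clopen-separability of $x_{*}$ from $F$ together with the combinatorics of how a continuous map can interact with the small neighborhoods of the witnesses $y_n$. The crux is thus to design the auxiliary graph so that its edges capture exactly the configurations ruling out rigidity, and to verify that this graph admits a countable proper colouring; it is here that I expect the bulk of the technical work to lie.
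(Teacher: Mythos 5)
Your overall strategy---encode a combinatorial antichain (here, an uncountable almost-disjoint family) into Borel sets and argue that continuous reductions must respect the encoding---is sound in spirit, and in fact the paper uses a variant of it in Section~\ref{section: incomparable sets of arbitrary complexity} (Theorem~\ref{incomparable non-definable sets again}), where locally separated ``gadgets'' $D^{\alpha_n}_{{\bf m}_n}$ are placed near a sequence of positive points and an injection $I\mapsto D_I$ from sets of reals to subsets of $X$ is built. But that construction needs \emph{local compactness}, precisely because gadgets placed near distinct points of $X$ carry no intrinsic obstruction to being permuted or collapsed by a continuous map: you need compactness to force $F$ to act on the family of gadgets in a controlled way. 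For the general metric-space theorem the paper works entirely inside a single ball around \emph{one} positive point $\alpha$, building concentric shells $D^\alpha_{\bf n}$ indexed by a strictly increasing sequence $\bf n$ with $\Delta{\bf n}$ strictly increasing, and replaces the almost-disjoint family by the tail-equivalence relation on the difference sequences $\Delta{\bf n}$, which has countable classes and hence a perfect transversal.

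The genuine gap in your proposal is the ``rigidity'' lemma, which you correctly identify as the crux but do not supply. The auxiliary-graph machinery you gesture at does not obviously attach to your set-up. In the paper the graphs $G_{\bf n}$ encode the \emph{radial} pattern of a single nest of shells, and the pivotal technical fact (Lemma~\ref{Xi is open intro}) is that a certain ``compatibility range'' is simultaneously open and relatively closed in a ball around $\alpha$; this is exactly where positive dimension is cashed in, since the positivity of $\alpha$ means no such set can be a proper clopen neighborhood, forcing the compatibility range to meet every sphere (the second claim in the proof of Lemma~\ref{existence of reduction implies tail equivalence}). Your scheme places the combinatorial structure on separated witnesses $y_n$, so the analogous open-and-closed argument is unavailable: a continuous $f$ can freely send the neighborhood of $y_n$ into the neighborhood of $y_m$ for $n\neq m$ (or into the complement of all gadgets), and nothing in the positivity of $x_*$ alone rules this out. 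Your proposed fix, ``a countable proper colouring of an auxiliary graph whose edges record bad configurations,'' is not a mechanism that produces a topological obstruction; the paper's ``unfoldings'' are not proper colourings but finite partial matchings between the two shell patterns, and the whole argument lives in a single nested ball so that the clopen-obstruction can bite. Finally, the step ``in general one iterates the non-clopen-separability inside a nested sequence of neighborhoods'' needs to be made precise: what you actually get from positivity is that for each radius $r$ the sphere $\partial B_r(x_*)$ is nonempty (otherwise $B_r(x_*)$ would be clopen), and it is this observation, rather than a sequence of witnesses in $F$, that the paper exploits.
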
 



%


As an application of the main result, 
we will characterize the Wadge order on Polish spaces by the following notion, which is important in the theory of quasi-orders (see e.g. \cite{MR3268712}). 

\begin{definition} 
A \emph{well-quasiorder (wqo)} is a quasi-order with the property that there is no infinite strictly decreasing sequence and no infinite set of pairwise incomparable elements. 
\end{definition} 


The next characterization is proved in Theorem \ref{characterization result again} below and answers Question \ref{main question}.

\begin{theorem} \label{characterization result} 
Suppose that $(X,d)$ is a Polish metric space. 
Then the following conditions are equivalent. 
\begin{enumerate-(1)} 
\item[(a)] 
$X$ has dimension $0$. 
\item[(b)] 
$\mathsf{SLO}_{(X,\tau)}$ holds. 
\item[(c)] 
The Wadge order on the Borel subsets of $X$ is a well-quasiorder. 
\item[(d)] 
There are at most two pairwise incomparable Borel subsets of $X$. 
\item[(e)] 
There are at most countably many pairwise incomparable Borel subsets of $X$. 
\end{enumerate-(1)} 
\end{theorem}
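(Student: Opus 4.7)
The plan is to establish the equivalences by running the cycle
$(a) \Rightarrow (b) \Rightarrow (d) \Rightarrow (e) \Rightarrow (a)$
together with the branch $(a) \Rightarrow (c) \Rightarrow (e)$, so as to insert condition $(c)$ into the loop.

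For $(a) \Rightarrow (b)$ I would use the classical fact that every zero-dimensional Polish space is homeomorphic to a closed subspace of the Baire space $\omega^\omega$; this lets the usual proof of Wadge's lemma transfer, delivering $\mathsf{SLO}_{(X,\tau)}$ verbatim. For $(a) \Rightarrow (c)$, the same homeomorphism combined with Martin's theorem (a consequence of Borel determinacy) gives that the Wadge order on the Borel subsets of $\omega^\omega$ is well-founded; together with $\mathsf{SLO}$, which already excludes antichains of size $\geq 3$, this yields a well-quasiorder.

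The implication $(b) \Rightarrow (d)$ is the argument promised after the definition of $\mathsf{SLO}$: if $A, B, C$ were pairwise incomparable Borel subsets, then applying $\mathsf{SLO}$ to each ordered pair would deliver $A \leq X\setminus B$, $B \leq X\setminus A$, $A \leq X\setminus C$, and so on; a further application of $\mathsf{SLO}$ to the pair $(X\setminus A, X\setminus B)$ either gives $X\setminus A \leq X\setminus B$, hence $A \leq B$ (reductions commute with complementation), or gives $X\setminus B \leq A$, which when composed with $A \leq X\setminus C$ forces $X\setminus B \leq X\setminus C$ and hence $B \leq C$; both cases contradict incomparability. The implications $(d) \Rightarrow (e)$ and $(c) \Rightarrow (e)$ are immediate, the latter because a well-quasiorder forbids infinite antichains.

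The core implication $(e) \Rightarrow (a)$ would be proved by contraposition: if $X$ has positive dimension, Theorem \ref{main theorem} supplies uncountably many pairwise incomparable Borel subsets of $X$, contradicting $(e)$. The entire substance of the characterization is thus absorbed into the main theorem; every other implication is either a classical Wadge/Martin result or routine order-theoretic bookkeeping, so the main obstacle is precisely Theorem \ref{main theorem} itself.
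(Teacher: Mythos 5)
Your proof is correct and matches the structure the paper intends, but it is more explicit and in one place more careful. The paper's own proof establishes only the implication $(a)\Rightarrow(b)$ via the homeomorphism to $[T]$ and the Wadge game, and then asserts that ``the remaining implications follow immediately from Theorem~\ref{main theorem again}.'' Taken literally this is a slight overstatement: the main theorem supplies $(e)\Rightarrow(a)$ by contraposition, and $(b)\Rightarrow(d)\Rightarrow(e)$ and $(c)\Rightarrow(e)$ are indeed routine, but the implication $(a)\Rightarrow(c)$ requires the Martin--Monk well-foundedness theorem (no infinite strictly decreasing Wadge chains under Borel determinacy) in addition to $\mathsf{SLO}$, and that does not come from Theorem~\ref{main theorem}. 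You correctly identify and supply this ingredient, which is the one genuinely non-immediate step the paper elides.

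Your $(b)\Rightarrow(d)$ derivation is valid. It can be streamlined: from pairwise incomparability of $A,B,C$, $\mathsf{SLO}$ applied to $(A,B)$ gives $B\leq X\setminus A$; then $\mathsf{SLO}$ applied to $(X\setminus A,\,C)$ gives either $X\setminus A\leq C$, whence $B\leq C$, or $C\leq A$ — a contradiction in either case. Your version via the pair $(X\setminus A, X\setminus B)$ also works (reductions commute with complements), just with one extra step. All other implications are as you say, immediate from the definitions, and the core of the theorem is indeed absorbed into Theorem~\ref{main theorem} via $(e)\Rightarrow(a)$.
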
 

It is worthwhile to mention that Pequignot defines an alternative quasi-order on the subsets of an arbitrary Polish space \cite{MR3372615}. Moreover, his notion is more natural in the sense that it always satisfes the $\mathsf{SLO}$ principle. 

%

We will further prove the following variant of the main result in Theorem \ref{incomparable non-definable sets again} below. 

\begin{theorem} \label{incomparable non-definable sets} 
Suppose that $(X,d)$ is a locally compact metric space of positive dimension. 
Then there is a (definable) injective map that takes sets of reals to subsets of $X$ in such a way that these subsets 
are pairwise incomparable with respect to continuous reducibility. 
\end{theorem}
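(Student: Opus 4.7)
The plan is to amplify the construction underlying Theorem~\ref{main theorem}. Since $|\mathcal{P}(\RR)| = 2^{\mathfrak{c}}$, in a separable space the sets $\Phi(A)$ must in general be non-Borel; I would therefore build them as unions of continuum-many ``rigid'' local pieces, one per real, selected according to whether $r \in A$. Local compactness enters by providing enough room to carry out the construction of Theorem~\ref{main theorem} in many places inside $X$ simultaneously, while arranging these places to be controlled by a single ``anchor'' point.

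First, using local compactness and positive dimension, I would extract a family $(K_r)_{r \in \RR}$ of pairwise disjoint compact subsets of $X$, each of positive dimension, accumulating at a single point $p \in X \setminus \bigcup_r K_r$. The construction proceeds by inductively cutting out disjoint compact pieces of positive dimension inside a compact neighborhood of a point where positive dimension concentrates, ensuring accumulation at $p$. Next, inside each $K_r$ I would apply the construction behind Theorem~\ref{main theorem} to obtain a Borel set $B_r \subseteq K_r$ together with an identifying ``dimensional tag'' $t_r \in B_r \setminus \bigcup_{s \neq r} B_s$, designed so that any continuous self-map of $X$ which would reduce the $B_r$-piece to the $B_{r'}$-piece must respect the tag and hence force $r = r'$. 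Finally, I would define $\Phi(A) = \bigcup_{r \in A} B_r$. Injectivity is immediate from the tag points, and incomparability of $\Phi(A)$ and $\Phi(A')$ for $A \neq A'$ would follow by restricting a hypothetical continuous reduction $f\colon X \to X$ to each $K_r$ with $r \in A$: the rigidity of the tag forces $f$ to send $B_r$ into some $B_{r'}$ with $r' \in A'$, yielding $A \subseteq A'$; the symmetric argument gives $A' \subseteq A$, a contradiction.

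The main obstacle I expect is establishing this rigidity simultaneously across the continuum-many $K_r$. Since a separable metric space cannot contain continuum-many pairwise disjoint non-empty open sets, the $K_r$ must be thin, and the identifying tag cannot be a crude open-topological invariant but must be anchored in the dimension-theoretic structure (the graph colorings mentioned in the abstract) provided by the proof of Theorem~\ref{main theorem}. Controlling the behavior of arbitrary continuous self-maps of $X$ near the accumulation point $p$, and ruling out reductions that mix pieces with the behavior near $p$, will be the technical core of the argument; once this rigidity is secured, the definability and injectivity of $\Phi$ follow uniformly from the construction.
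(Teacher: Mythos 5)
Your plan has a genuine gap at the very first step: you cannot, in general, find continuum-many pairwise disjoint compact subsets of $X$ of positive dimension. The theorem must apply to $X=\RR$, which is locally compact and one-dimensional, yet a compact $K\subseteq\RR$ of positive dimension necessarily contains a nondegenerate interval (a subset of $\RR$ is zero-dimensional iff it contains no interval), hence has nonempty interior, hence contains a rational. So in $\RR$ one can have at most countably many pairwise disjoint compact sets of positive dimension, not continuum-many. You note the obstruction "the $K_r$ must be thin," but in $\RR$ there is no "thin" escape: any positive-dimensional compact piece is fat, and continuum-many disjoint ones simply do not exist. The whole architecture of putting one rigid positive-dimensional piece per real therefore collapses already in the most basic example.

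The paper's construction avoids this by \emph{not} demanding a positive-dimensional locus per real. It works inside a single pre-compact ball $Y$ around one positive point; uses a dimension-theoretic fact to see that the positive points $B\cap Y$ are uncountable; extracts a nowhere dense perfect set $C\subseteq\cl(B)\cap Y$ (whose individual points carry no rigid structure at all) and a \emph{countable} sequence of positive points $\alpha_n$ with shrinking disjoint balls, disjoint from $C$, that accumulate on all of $C$. The continuum-sized information is stored on $C$ via a bijection $h\colon 2^{\NN}\to C$, setting $D_I=h[I]\cup\bigcup_n D_{{\bf m}_n}^{\alpha_n}$ with pairwise $E_{\mathrm{tail}}$-inequivalent sequences ${\bf m}_n$. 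Rigidity is derived from the countably many pinned pieces $D_{{\bf m}_n}^{\alpha_n}$: a hypothetical reduction $F$ of $D_I$ to $D_J$, run through Lemma \ref{existence of reduction implies tail equivalence} and Lemma \ref{unfoldings with large domain implies tail equivalence} at each $\alpha_n$, is forced to map each $\alpha_n$-ball essentially back to itself, and since the $\alpha_n$ accumulate on $C$ and the radii tend to $0$ this forces $F\upharpoonright C=\mathrm{id}\upharpoonright C$, giving $h[I]=h[J]$ and thus $I=J$. The moral is that only countably many "tags" are needed, and the continuum of information is carried by a perfect set of limit points, each individually zero-dimensional. Your proposal has the right instinct (reuse the Theorem \ref{main theorem} machinery, exploit local compactness, force rigidity near an accumulation point), but the quantifiers are in the wrong place.
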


 
%
%
%

This paper has the following structure. 
We will prove Theorem \ref{main theorem} and Theorem \ref{characterization result} in Section  \ref{section: incomparable Borel sets}, but postpone the proof of some auxiliary results to Section \ref{section: auxiliary results}. 
Moreover, Section \ref{section: incomparable sets of arbitrary complexity} contains 
the proof of Theorem \ref{incomparable non-definable sets}, and some further remarks on the proofs can be found in Section \ref{section: further remarks}. 

We would like to thank the referee for various useful suggestions to make the paper more readable.

\section{Incomparable Borel sets} \label{section: incomparable Borel sets}

In this section, we will prove Theorem \ref{main theorem}, except for some technical steps that are postponed to the next sections.

We always let the letters $i$ to $n$ and $s$ to $w$ denote natural numbers. 
We assume that $(X,d)$ is a metric space of positive dimension and let $\alpha$, $\beta$ always denote elements of $X$. 
Moreover, for any $r> 0$ we will denote the open ball with radius $r$ around $\alpha$ by 
\[B_r(\alpha)=\{x\in X\mid d(\alpha,x)<r\}.\] 
If $\alpha$ is an element and $U$ is an open subset of $X$, then $\alpha$ is called \emph{$U$-positive} if $\alpha\in U$ and no subset $V$ of $U$ that contains $\alpha$ is open and closed in $X$. 
We further call an element $\alpha$ of $X$ \emph{positive} if it is $U$-positive for some open set $U$. 
Now $X$ has at least one positive element by the definition of positive dimension.

For each $\alpha\in X$, we fix some $r^\alpha>0$ such that $\alpha$ is $B_{r^\alpha}(\alpha)$-positive if $\alpha$ is positive and $r^\alpha$ is arbitrary otherwise. 
We further fix a strictly increasing sequence ${\bf r}^\alpha=\langle r_t^\alpha\mid t\geq 0\rangle$ with $r_0^\alpha=0$ and $\sup_{t\geq 0}r_t^\alpha=r^{\alpha}$. Let $X^{\alpha}=B_{r^{\alpha}}(\alpha)$ and $X^{\alpha}_{<t}=B_{r_t^{\alpha}}(\alpha)$ for any $t\geq 0$.







\begin{definition} \label{definition of C_A}
If $A\subseteq \mathbb{R}_{\geq 0}=\{x\in \mathbb{R}\mid x\geq 0\}$, let 
\[B^{\alpha }_A=\{x\in X^{\alpha}\mid d(\alpha,x)\in A\}\] 
if $\inf (A)>0$ and 
\[B^{\alpha }_A=\{x\in X^{\alpha}\mid d(\alpha,x)\in A\}\cup\{\alpha\}\] 
otherwise. 
If moreover $i,j\in\NN$ with $i<j$, we let 
\[C^\alpha_{(i,j)}=B^{\alpha }_{(r^\alpha_i,r^\alpha_j)}\] 
and define this in an analogous way for half-open and closed intervals from $i$ to $j$. 
\end{definition} 



We always let ${\bf m}=\langle m_i\mid i\geq 0\rangle$ and ${\bf n}=\langle n_i\mid i\geq 0\rangle$ denote strictly increasing sequences of natural numbers beginning with $0$. Moreover, we will frequently use the following notation. 

\begin{definition} \label{definition: even and odd blocks} 
Suppose that $\bf n$ is as above. 
Letting $[n_i,n_{i+1})$ denote the interval in $\mathbb{N}$, we define the following sets of natural numbers. 
\begin{enumerate-(a)} 
\item 
$\Even_{\bf n}
=\bigcup_{i\in \mathbb{N}\text{ is even}} [n_i, n_{i+1})$, 
\item 
$\Odd_{\bf n}=\bigcup_{i\in\mathbb{N}\text{ is odd}} [n_i, n_{i+1})$. 
\end{enumerate-(a)} 
\end{definition} 

In the next definition, we define sets $D_{\bf n}^\alpha$ for $\alpha$ and $\bf n$ as above. These sets will later be shown to be incomparable under appropriate assumptions.

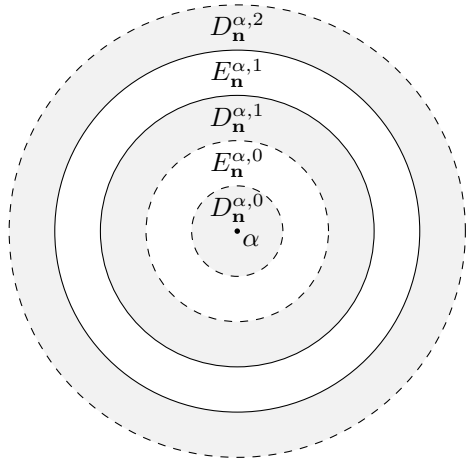
\begin{figure}[h]
\begin{center} 
\begin{tikzpicture}[scale=0.6] 

\definecolor{light}{gray}{0.95} 

\fill[fill=light] (0,0) circle (5cm); 
\draw[dashed] (0,0) circle (5cm); 

\fill[fill=white] (0,0) circle (4cm); 
\draw (0,0) circle (4cm); 
\fill[fill=light] (0,0) circle (3cm); 
\draw (0,0) circle (3cm); 
\fill[fill=white] (0,0) circle (2cm); 
\draw[dashed] (0,0) circle (2cm); 
\fill[fill=light] (0,0) circle (1cm); 
\draw[dashed] (0,0) circle (1cm); 
\fill (0,0) circle (0.06cm); 


\node at (0.3,-0.2) {$\alpha$}; 


\node at (0,0.5) {$D_{\bf n}^{\alpha,0}$};
\node at (0,1.5) {$E_{\bf n}^{\alpha,0}$};
\node at (0,2.5) {$D_{\bf n}^{\alpha,1}$};
\node at (0,3.5) {$E_{\bf n}^{\alpha,1}$};
\node at (0,4.5) {$D_{\bf n}^{\alpha,2}$};

\end{tikzpicture}
\end{center} 
\caption{A diagram of 
$D_{\bf n}^\alpha$ (shaded), assuming that $n_0=0$, $n_1=1$ and $n_2=2$. 
Solid lines consist of elements of $D_{\bf n}^\alpha$ and dotted lines of elements of $E_{\bf n}^\alpha$.  
} \label{figure concentric circles}
\end{figure}

\begin{definition} \label{definition: blocks indexed by sequences} 
Suppose that $\bf n$ is as above. 
\begin{enumerate-(a)} 
\item 
\begin{enumerate-(i)} 
\item 
Let $s$ denote the string of symbols $2j,2j+1$. 

$D_{{\bf n},j}^{\alpha}=
\begin{cases} 
C^\alpha_{[s)} & \text{if } j\in \Even_{\bf n}, \\ 
C^\alpha_{(s]} & \text{if } j\in \Odd_{\bf n}, 
\end{cases}$ 
\item 
$D_{\bf n}^\alpha=\bigcup_{j\geq 0} D_{{\bf n}, j}^{\alpha}$, 
\end{enumerate-(i)} 
\item 
\begin{enumerate-(i)} 
\item 
Let $s$ denote the string of symbols $2j+1,2j+2$ and let $u$ denote the ordered pair $(j,j+1)$. 

$E_{{\bf n},j}^{\alpha}=
\begin{cases} 
C_{[s)}^\alpha & \text{if } u\in \Even_{\bf n}\times \Even_{\bf n}, \\ 
C_{[s]}^\alpha & \text{if } u\in \Even_{\bf n}\times \Odd_{\bf n}, \\ 
C_{(s)}^\alpha & \text{if } u\in \Odd_{\bf n}\times \Even_{\bf n}, \\ 
C_{(s]}^\alpha & \text{if } u\in \Odd_{\bf n}\times \Odd_{\bf n}, 
\end{cases}$
\item 
$E_{{\bf n},j}^\alpha=\bigcup_{j\geq 0} E_{{\bf n},j}^{\alpha}$.
\end{enumerate-(i)} 
\end{enumerate-(a)} 
\end{definition}

The sets $E_{{\bf n},j}^{\alpha}$ 
are chosen to partition the complement of $D_{\bf n}^\alpha$ in $X^{\alpha}$, and thus the sets $D_{\bf n}^\alpha$, $E_{\bf n}^\alpha$ partition $X^{\alpha}$, 
as illustrated in Figure \ref{figure concentric circles}.

The idea for the proof of Theorem \ref{main theorem} in the remainder of this section is as follows. We will associate certain graphs to the sets $D^\alpha_{\bf n}$ for various sequences $\bf n$ as above. Then, we will show that for all positive $\alpha$, $\beta$, the existence of a continuous reduction $F\colon X\rightarrow X$ of $D^\alpha_{\bf m}$ to $D^\beta_{\bf n}$ 
implies the existence of certain maps associated to these graphs, and moreover, that such maps cannot exist if $\bf m$, $\bf n$ are sufficiently different. 

The combinatorics in the following proofs reflect the 
fact that it does not follow from the existence of such a function $F$ that $\bf m$ and $\bf n$ are equal, as can be seen from the example in Figure \ref{figure curve through concentric circles}. 

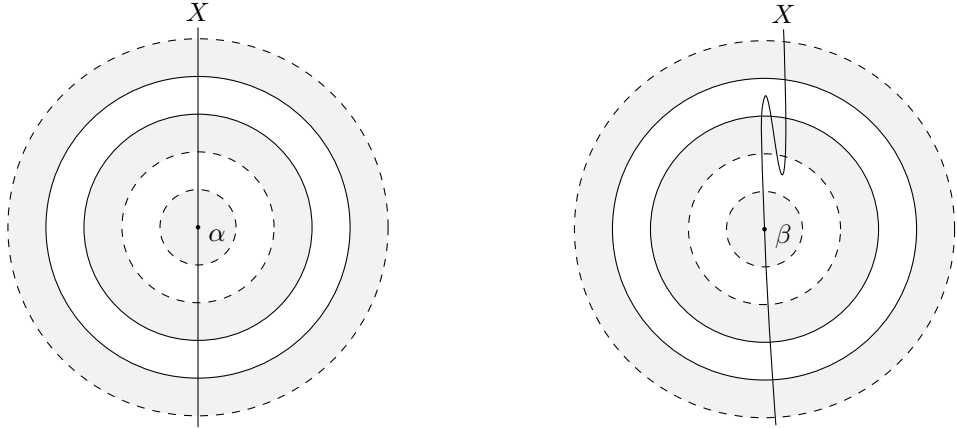
\begin{figure}[h] 
\begin{subfigure}{{.5\textwidth}}
\centering 
\begin{tikzpicture}[scale=0.5] 
\definecolor{light}{gray}{0.95} 

\fill[fill=light] (0,0) circle (5cm); 
\draw[dashed] (0,0) circle (5cm); 
\fill[fill=white] (0,0) circle (4cm); 
\draw (0,0) circle (4cm); 
\fill[fill=light] (0,0) circle (3cm); 
\draw (0,0) circle (3cm); 
\fill[fill=white] (0,0) circle (2cm); 
\draw[dashed] (0,0) circle (2cm); 
\fill[fill=light] (0,0) circle (1cm); 
\draw[dashed] (0,0) circle (1cm); 
\fill (0,0) circle (0.06cm); 

\node at (0.5,-0.2) {$\alpha$};
\node at (0,5.7) {$X$};

\draw [ xshift=0cm] plot [smooth, tension=1] coordinates {(0,-5.3) (0,0) (0,5.3)};

\end{tikzpicture}
\end{subfigure}%
\begin{subfigure}{{.5\textwidth}}
\centering 
\begin{tikzpicture}[scale=0.5] 

\definecolor{light}{gray}{0.95} 

\fill[fill=light] (0,0) circle (5cm); 
\draw[dashed] (0,0) circle (5cm); 
\fill[fill=white] (0,0) circle (4cm); 
\draw (0,0) circle (4cm); 
\fill[fill=light] (0,0) circle (3cm); 
\draw (0,0) circle (3cm); 
\fill[fill=white] (0,0) circle (2cm); 
\draw[dashed] (0,0) circle (2cm); 
\fill[fill=light] (0,0) circle (1cm); 
\draw[dashed] (0,0) circle (1cm); 
\fill (0,0) circle (0.06cm); 

\node at (0.5,-0.2) {$\beta$};
\node at (0.5,5.7) {$X$};

\draw [ xshift=0cm] plot [smooth, tension=1] coordinates {(0.3,-5.2) (0,0) (0,3.5) (0.5,1.5) (0.5,5.3)};

\end{tikzpicture} 
\end{subfigure}

\caption{Diagrams of $D_{\bf n}^\alpha$, $D_{\bf n}^\beta$ (shaded) for a one-dimensional subspace $X$ of $\RR^2$ and positive elements $\alpha$, $\beta$ of $X$. 
} \label{figure curve through concentric circles}
\end{figure} 


In (undirected) graphs, we will identify the edges, i.e. $2$-element sets of vertices, with ordered pairs. 

\begin{definition} \label{definition colored graph} 
A \emph{colored graph} consists of a graph $G$ with vertex set $V$ and edge set $E$ that satisfy the following conditions, together with a coloring $c_G$ in the colors $0$ and $1$ that is defined both on the vertices and the edges. 
\begin{enumerate-(a)} 
\item 
$V$ is a (finite or infinite) interval in $\ZZ$ of size at least $2$. 
\item 
$E$ consists of the pairs of successive vertices in $V$. 
\end{enumerate-(a)} 
\end{definition} 

We will write $c_G(i,i+1)$ for the color of the edge $(i,i+1)$ and thereby omit the additional brackets. 

\begin{definition} 
Suppose that $G$ and $H$ are colored graphs. 
\begin{enumerate-(a)} 
\item 
A \emph{reduction $f$ from $G$ to $H$} is a function defined on both the vertices and the edges of $G$ that satisfies the following conditions. 
The vertices of $G$ are mapped to vertices of $H$, the edges of $G$ are mapped to edges of $H$ and the map preserves colors in both cases. 
Moreover, if $(i,i+1)$ is an edge in $G$, then $f(i)$ and $f(i+1)$ are end points of the edge $f(i,i+1)$ in $H$. 
We will write $\dom{f}$ and $\ran{f}$ for the sets of vertices in the domain and range of $f$, respectively. 
\item 
An \emph{unfolding of $G$, $H$} is a pair $\xi=(f,g)$ such that for some finite colored graph $I$, $f$ is a reduction from $I$ to $G$ and $g$ is a reduction from $I$ to $H$. We will write 
$\dom{\xi}$
for the vertex set of $I$ and let $c_\xi=c_I$. 
\end{enumerate-(a)} 
\end{definition} 

It follows immediately from the definition of reductions that the image of a reduction $f$ is an interval in $\ZZ$, and for all $i$ with $i,i+1\in\dom{f}$ and $f(i)\neq f(i+1)$, $f(i,i+1)=(f(i),f(i+1))$. 

The followings colored graphs carry information about the sets $D_{\bf n}^\alpha$ and $E_{\bf n}^\alpha$, which were defined above. Moreover, the maps between graphs defined below carry information about possible continuous reductions from $D_{\bf m}^\alpha$ to $D_{\bf n}^\beta$ for positive elements $\alpha$, $\beta$ of $X$ and sequences $\bf m$, $\bf n$ as above.  

\begin{definition} \label{definition of Gn}
Suppose that ${\bf n}$ is as above. 
Let $G_{\bf n}$ denote the colored graph with vertex set $\NN$ and the coloring $c_{\bf n}$ defined on the vertices by 

\begin{equation*}
    c_{\bf n}(2j) =
    \begin{cases*}
      1 & if $j\in \Even_{\bf n}$ \\
      0 & if $j\in \Odd_{\bf n}$ \\       
    \end{cases*}
\end{equation*} 

\begin{equation*}
    c_{\bf n}(2j+1) =
    \begin{cases*}
      0 & if  $j\in \Even_{\bf n}$ \\
      1 & if  $j\in \Odd_{\bf n}$ \\       
    \end{cases*}
\end{equation*} 
for $j\geq 0$ and on the edges by 

\begin{equation*}
    c_{\bf n}(j,j+1) =
    \begin{cases*}
      1 & if \text{$j$ is even} \\
      0 & if \text{$j$ is odd}  \\       
    \end{cases*}
\end{equation*} 
for $j\geq 0$. 
\end{definition}

When ${\bf m}$, ${\bf n}$ are as above and $\xi$ is an  unfolding, we will always mean that it is an unfolding of $G_{\bf m}$, $G_{\bf n}$.

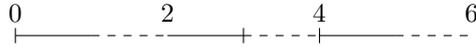
\begin{figure}[h]
\begin{tikzpicture} 
\draw (0,0) -- (1,0);
\node at (0,0.3) {$0$};
\draw (0,-0.1) -- (0,0.1);
\draw[dashed] (1,0) -- (2,0);
\draw (2,0) -- (3,0);
\node at (2,0.3) {$2$};
\draw (3,-0.1) -- (3,0.1);
\draw[dashed] (3,0) -- (4,0);
\draw (4,0) -- (5,0);
\node at (4,0.3) {$4$};
\draw (4,-0.1) -- (4,0.1);
\draw[dashed] (5,0) -- (6,0);
\node at (6,0.3) {$6$};

\end{tikzpicture} 

\caption{A diagram of the graph $G_{\bf n}$ corresponding to Figures \ref{figure concentric circles} and \ref{figure curve through concentric circles}.
The solid lines denote edges with color $1$ and the dotted lines denote edges with color $0$. Moreover, the marked vertices have color $1$ and the other ones have color $0$. }
\end{figure} 

\begin{definition} 
Suppose that $\bf m$, $\bf n$ are as above and $\xi=(f,g)$ is an unfolding of $G_{\bf m}$, $G_{\bf n}$, where $f$ is a reduction from $I$ to $G_{\bf m}$ and $g$ is a reduction from $I$ to $G_{\bf n}$. 
We define the relation $\sim_\xi$ on pairs of vertices and pairs of edges as follows. 
\begin{enumerate-(a)} 
\item 
If $k$ is a vertex in $G_{\bf m}$ and $l$ is a vertex in $G_{\bf n}$, 
let $k\sim_{\xi}l$ if $f(j)=k$ and $g(j)=l$ for some vertex $j$ in $I$. 
\item 
If $v$ is an edge in $G_{\bf m}$ and $w$ is an edge in $G_{\bf n}$, let $v \sim_{\xi} w$ if $f(u)=v$ and $g(u)=w$ for some edge $u$ in $I$. 
\end{enumerate-(a)} 
\end{definition} 


We will from now on always assume that we are in the following situation. 
We assume that $\alpha$ is a positive and $\beta$ is an arbitrary element of $X$. 
The sequences $\bf m$, $\bf n$ are strictly increasing sequences in $\NN$ beginning with $0$ as above and $F\colon X\rightarrow X$ is a continuous function with $D^\alpha_{\bf m}\cap Y^{\alpha,\beta}=F^{-1}[D^\beta_{\bf n}]\cap Y^{\alpha,\beta}$, where $Y^{\alpha,\beta}=X^{\alpha}\cap F^{-1}[X^{\beta}]$.


\begin{definition} \label{definition compatible} 
Suppose that $\bf m$, $\bf n$, $\alpha$, $\beta$, $F$ are as above. 
Moreover, suppose that $x\in X^{\alpha }$ and $\xi=(f,g)$ is an unfolding of $G_{\bf m}$, $G_{\bf n}$. 
We say that \emph{$x$ and $\xi$ are compatible} with respect to ${\bf m}$, ${\bf n}$, $\alpha$, $\beta$, $F$ if one of the following conditions holds. 
\begin{enumerate-(a)} 
\item 
$(x,F(x))\in D^{\alpha}_{{\bf m},i}\times D^{\beta}_{{\bf n},j}$ for some pair $(i,j)$ with 
$$(2i,2i+1)\sim_\xi (2j,2j+1).$$ 
\item 
$(x,F(x))\in E^{\alpha}_{{\bf m},i}\times E^{\beta}_{{\bf n},j}$ for some pair $(i,j)$ with 
$$(2i+1,2i+2)\sim_\xi (2j+1,2j+2).$$ 
\end{enumerate-(a)} 
We will omit ${\bf m}$, ${\bf n}$, $\alpha$, $\beta$, $F$ if they are clear from the context. 
Moreover, we define the \emph{compatibility range} of $\xi$ with respect to these parameters 
as the set $X_{{\bf m}, {\bf n}, \xi}^{\alpha,\beta,F}$ of all $x\in X^{\alpha}$ such that $x$ and $\xi$ are compatible. 
\end{definition}

\begin{definition} \label{definition X_i} 
Suppose that $\bf m$, $\bf n$, $\alpha$, $\beta$, $F$ are as above, $i\geq 0$ and $t\geq 1$. 
\begin{enumerate-(a)} 
\item 
Let $\mathbb{U}_{{\bf m}, {\bf n},i}$ denote the set of unfoldings $\xi$ of $G_{\bf m}$, $G_{\bf n}$ with $0\sim_\xi i$ 
and $\mathbb{U}_{{\bf m}, {\bf n},i,{<}t}$ the set of $\xi=(f,g)\in \mathbb{U}_{{\bf m}, {\bf n},i}$ with $t\notin \ran{f}$. 
\item 
Let $X_{{\bf m}, {\bf n},i,<t}^{\alpha, \beta,F}=X^\alpha_{<t}\cap \bigcup_{\xi\in \mathbb{U}_{{\bf m}, {\bf n},i,<t}} X_{{\bf m}, {\bf n}, \xi}^{\alpha,\beta,F}$. 
\end{enumerate-(a)} 
\end{definition}

The key to the proof of the main theorem is given by the next lemma, whose proof is postponed to Section \ref{section proofs Xi is open and closed} due to its technical nature. 

\begin{lemma} \label{Xi is open intro} \label{Xi is closed intro} 
For all $\bf m$, $\bf n$, $\alpha$, $\beta$, $F$ as above with $\Delta{\bf m}$, $\Delta{\bf n}$ strictly increasing, $s\geq 0$ and $t\geq 1$, $X_{{\bf m}, {\bf n},s,<t}^{\alpha,\beta,F}$ is an open and relatively closed subset of $X^{\alpha}_{<t}$. 
\end{lemma}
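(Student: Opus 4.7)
The statement decomposes into showing openness and relative closedness of $X_{{\bf m}, {\bf n},s,<t}^{\alpha,\beta,F}$ in $X^{\alpha}_{<t}$. In both parts, the guiding idea is that small perturbations of a compatible point $x$ can be absorbed by correspondingly extending the witnessing unfolding $\xi$, and that when passing to a limit point the $\xi_n$ witnessing the approximants can be (after passing to a subsequence) either reused or trimmed to yield a witness for the limit. The reason this is combinatorially feasible is that the coloring of $G_{\bf m}$ and $G_{\bf n}$ is designed to mirror the geometric alternation between the $D$- and $E$-layers in $X^\alpha$ and $X^\beta$.

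For openness, fix $x_0\in X_{{\bf m}, {\bf n},s,<t}^{\alpha,\beta,F}$ with witness $\xi_0=(f_0,g_0)\in\mathbb{U}_{{\bf m},{\bf n},s,<t}$ and compatibility pair $(i_0,j_0)$; assume case (a) of Definition \ref{definition compatible}, the $E$-case being symmetric. By continuity of $F$ and of the distance functions $d(\alpha,\cdot)$, $d(\beta,\cdot)$, I choose an open ball $U\subseteq X^{\alpha}_{<t}$ around $x_0$ so small that for every $x\in U$, the radius $d(\alpha,x)$ lies in one of the at most two annular intervals adjacent to the one containing $x_0$, and analogously $d(\beta,F(x))$ lies in an annular interval adjacent to the one containing $F(x_0)$. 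This yields finitely many possible combinations of regions in which $(x,F(x))$ can sit. In the diagonal case where $(x,F(x))\in D^\alpha_{{\bf m},i_0}\times D^\beta_{{\bf n},j_0}$, $\xi_0$ itself still witnesses $x$; for each of the remaining cases I construct a compatible unfolding $\xi$ by extending $\xi_0$ on one side by a bounded number of new vertices of its colored-graph domain $I_0$, mapping them to neighboring vertices of $(2i_0,2i_0+1)$ in $G_{\bf m}$ and of $(2j_0,2j_0+1)$ in $G_{\bf n}$. The strict monotonicity of $\Delta{\bf m}$ and $\Delta{\bf n}$ will guarantee that a color-preserving extension is always available. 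Shrinking $U$ if necessary also keeps the new vertices from ever reaching level $t$ of $G_{\bf m}$, so the extended unfolding still lies in $\mathbb{U}_{{\bf m},{\bf n},s,<t}$.

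For relative closedness, let $x_n\to x$ with $x_n\in X_{{\bf m}, {\bf n},s,<t}^{\alpha,\beta,F}$ and $x\in X^{\alpha}_{<t}$. Each $x_n$ is witnessed by some $\xi_n=(f_n,g_n)$ and a pair $(i_n,j_n)$. Since $(2i_n,2i_n+1)\in\ran{f_n}\subseteq[0,t)$, the indices $i_n$ take only finitely many values; and since $F$ is continuous and each $F(x_n)$ lies in a definite annulus about $\beta$, continuity forces the $j_n$ to be bounded as well. Passing to a subsequence I may assume $(i_n,j_n)=(i,j)$ is constant, whence $x\in\overline{D^\alpha_{{\bf m},i}}\subseteq X^{\alpha}$ and $F(x)\in\overline{D^\beta_{{\bf n},j}}\subseteq X^{\beta}$. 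Each of these closed annuli is covered by the original region together with its two immediate $E$-neighbors, so once more the limit sits in one of finitely many region combinations; for each of them I exhibit a compatible unfolding, either by reusing $\xi_n$ for large $n$ or by the same extension recipe as in the openness step.

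The main obstacle throughout is the case-by-case verification that for every relevant configuration of adjacent regions there exists an actual color-preserving and range-respecting extension of the unfolding. This hinges on the strict monotonicity of $\Delta{\bf m}$ and $\Delta{\bf n}$, which governs where the vertex colors in $G_{\bf m}$ and $G_{\bf n}$ alternate and thereby ensures that a compatibly colored neighboring edge is always present. It is precisely this combinatorial bookkeeping — guaranteeing that the several boundary-crossing cases can each be handled within $\mathbb{U}_{{\bf m},{\bf n},s,<t}$ — that warrants being deferred to Section \ref{section proofs Xi is open and closed}.
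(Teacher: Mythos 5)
Your openness argument is broadly in line with the paper's: both shrink to a small neighborhood on which the annular positions of $x$ and $F(x)$ are controlled (the paper calls such sets $(\alpha,\beta)$-small), and both upgrade the witnessing unfolding by a short extension when the base point sits on a sphere $\Gamma^\alpha$ or $\Gamma^\beta$; the paper isolates the extension step in a dedicated lemma (its Lemma~\ref{extend unfolding}) and handles the non-boundary case with another auxiliary result (Lemma~\ref{same unfolding}), but the strategy matches yours.

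The closedness argument, however, has a genuine gap. You write that since each $F(x_n)$ lies in a definite annulus, ``continuity forces the $j_n$ to be bounded.'' Continuity of $F$ alone does not give this. If the limit $F(x)$ sat on the boundary sphere of $X^\beta$ (or outside $X^\beta$ entirely), then $d(\beta,F(x_n))$ could tend to $r^\beta$ and the indices $j_n$ would go to infinity; nothing about the topology of $X$ or the continuity of $F$ prevents this a priori. Equivalently, your argument implicitly assumes $x\in Y^{\alpha,\beta}$, i.e.\ $F(x)\in X^\beta$, which is exactly the point that needs proof. The paper devotes a standalone claim to this: it shows that if the $F(x_u)$ escaped to arbitrarily far annuli, one could take a witnessing unfolding $\zeta=(f,g)$ with $t\notin\ran{f}$, pick $k$ with the block $[2n_k,2n_{k+1})$ wide enough, and deduce that $\ran{g}$ passes through that block while $\ran{f}$ is stuck in $[0,t)$; the distance estimate from the sum formula (its Lemma~\ref{sum formula}\,(\ref{distance statement}), using that $\Delta\mathbf m,\Delta\mathbf n$ are strictly increasing) then bounds how far $g$ can move while $f$ stays confined, producing a contradiction. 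This combinatorial confinement argument is precisely what makes $x\in Y^{\alpha,\beta}$ hold, and it is the technically central step of the closedness proof; without it the passage to a constant subsequence $(i_n,j_n)=(i,j)$ is unjustified. You would need to supply this step (or an equivalent) for the proof to go through.
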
 


Note that it follows 
that $X_{{\bf m}, {\bf n},s,<t}^{\alpha,\beta,F}$ is an open subset of the full space $X$, since $X^{\alpha}$ is open. However, 
we do not require that $X_{{\bf m}, {\bf n},s,<t}^{\alpha,\beta,F}$ is a closed subset of $X$.

The next step in the proof of Theorem \ref{main theorem} is given by the following result, which is used only for $\alpha=\beta=\gamma$ there and later, in its general form, in the proof of Theorem \ref{incomparable non-definable sets}.

\begin{lemma} \label{existence of reduction implies tail equivalence} 
Suppose that $\bf m$, $\bf n$, $\alpha$, $\beta$, $F$ are as above with $\Delta \bf m$, $\Delta \bf n$ strictly increasing 
and some $\gamma\in X^\alpha_{<1}\cap Y^{\alpha,\beta}$ is $X^\alpha$-positive. 
Then 
$$\bigcup_{(f,g)\in \mathbb{U}_{{\bf m}, {\bf n},l}} \ran{f}=\NN$$ 
for some $l\geq 0$. 
\end{lemma}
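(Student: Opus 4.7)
The plan is to pick $l$ from the position of $F(\gamma)$, introduce the set $S_l\subseteq X^\alpha$ of points compatible with some unfolding in $\mathbb{U}_{{\bf m},{\bf n},l}$, and — assuming the conclusion fails — derive a contradiction from Lemma \ref{Xi is closed intro} combined with the $X^\alpha$-positivity of $\gamma$.

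First I would observe that $0\in\Even_{\bf m}$ gives $X^\alpha_{<1}=D^\alpha_{{\bf m},0}$, so $\gamma\in D^\alpha_{\bf m}\cap Y^{\alpha,\beta}$, and the reduction hypothesis forces $F(\gamma)\in D^\beta_{{\bf n},j_0}$ for some $j_0\geq 0$. Set $l=2j_0$ if $j_0\in\Even_{\bf n}$ and $l=2j_0+1$ otherwise. On the colored graph $I$ with vertex set $\{0,1\}$ and its single edge, the unfolding $\xi_0=(f,g)$ with $f$ the identity and $g$ mapping $\{0,1\}$ bijectively onto $\{2j_0,2j_0+1\}$ in the orientation forced by the vertex colors is easily checked to be a valid unfolding compatible with $\gamma$ via Definition \ref{definition compatible}(a), and to satisfy $0\sim_{\xi_0}l$; hence $\gamma\in S_l$. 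Now suppose for contradiction that $R_l:=\bigcup_{(f,g)\in\mathbb{U}_{{\bf m},{\bf n},l}}\ran{f}$ is a proper subset of $\NN$. Since each $\ran{f}$ is an interval in $\NN$ containing $0$, $R_l$ is a proper initial segment $[0,k^*]$; set $t^*=k^*+1$. Then no $\xi\in\mathbb{U}_{{\bf m},{\bf n},l}$ has $t^*\in\ran{f}$, so $\mathbb{U}_{{\bf m},{\bf n},l,<t^*}=\mathbb{U}_{{\bf m},{\bf n},l}$, and the indices $i$ appearing in Definition \ref{definition compatible} for any compatible pair $(x,\xi)$ satisfy $2i+1\leq k^*$ (respectively $2i+2\leq k^*$ in the $E$-case), giving $d(\alpha,x)\leq r^\alpha_{k^*}<r^\alpha_{t^*}$. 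Hence $S_l\subseteq X^\alpha_{<t^*}$ and $S_l=X^{\alpha,\beta,F}_{{\bf m},{\bf n},l,<t^*}$.

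It remains to upgrade the relative clopenness of $S_l$ in $X^\alpha_{<t^*}$ supplied by Lemma \ref{Xi is closed intro} to clopenness in $X$. Openness in $X$ is automatic since $X^\alpha_{<t^*}$ is open in $X$. For closedness, any limit point $y$ of $S_l$ in $X$ satisfies $d(\alpha,y)\leq r^\alpha_{k^*}<r^\alpha_{t^*}$ by continuity of the metric, so $y\in X^\alpha_{<t^*}$, and then relative closedness forces $y\in S_l$. Thus $S_l$ is a clopen subset of $X$ contained in $X^\alpha$ and containing $\gamma$, contradicting the $X^\alpha$-positivity of $\gamma$. The step I expect to be most delicate is isolating this boundedness observation, which is what allows the relative closedness of Lemma \ref{Xi is closed intro} to upgrade to genuine closedness in $X$; modulo this, the argument is routine in light of Lemma \ref{Xi is closed intro}.
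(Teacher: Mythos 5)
Your proof is correct, and it invokes the same key ingredients as the paper's (the clopenness Lemma~\ref{Xi is open intro}/\ref{Xi is closed intro} for the compatibility range, plus $X^\alpha$-positivity of $\gamma$), but it packages them differently in a way that is actually cleaner. The paper first proves a ``sphere'' claim --- for every radius $r$ with $d(\alpha,\gamma)\le r<r^\alpha_t$ there is a compatible point at distance exactly $r$, obtained by a contradiction at each $r$ --- and only then reads off, radius by radius, that each $2j$ lies in some $\ran{f}$. You instead negate the conclusion outright, deduce a uniform bound $k^*$ on all the ranges, note that this forces every compatible $x$ to satisfy $d(\alpha,x)\le r^\alpha_{k^*}$, and hence that the entire compatibility set $S_l=X^{\alpha,\beta,F}_{{\bf m},{\bf n},l,<t^*}$ sits inside a closed ball strictly inside $X^\alpha_{<t^*}$; this is exactly what lets you promote ``relatively closed in $X^\alpha_{<t^*}$'' to ``closed in $X$'' in one step. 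So you apply the clopenness lemma once, not once per radius. One further difference worth noting: your choice $l=2j_0$ if $j_0\in\Even_{\bf n}$ and $l=2j_0+1$ otherwise is actually more careful than the paper's, which fixes the label $2l$ in both parities even though the unfolding it constructs in the odd case gives $0\sim_\xi 2l+1$; your version avoids that mismatch. Your base unfolding (identity $f$ on $\{0,1\}$ rather than the paper's collapse $f(0)=f(1)=0$) is also a valid alternative witness.
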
 
\begin{proof} 
We assumed before Definition \ref{definition compatible} that $F\colon X\rightarrow X$ is a continuous function with 
$D^\alpha_{\bf m}\cap Y^{\alpha,\beta}=F^{-1}[D^\beta_{\bf n}]\cap Y^{\alpha,\beta}$, where 
$Y^{\alpha,\beta}=X^{\alpha}\cap F^{-1}[X^{\beta}]$ as given before Definition \ref{definition compatible}. 
Now the assumptions on $\gamma$ imply that $\gamma\in D^\alpha_{\bf m}\cap Y^{\alpha,\beta}$ and hence $F(\gamma)\in D^\beta_{\bf n}$. 
Since the sets $D^{\beta}_{{\bf n},j}$ are pairwise disjoint for different $j\geq 0$, there is a unique $l\geq 0$ with $F(\gamma)\in D^{\beta}_{{\bf n},l}$. 
We will prove the next two claims for arbitrary $t\geq 1$ and write $X_{2l}=X^{\alpha,\beta,F}_{{\bf m},{\bf n},2l,<t}$.

\begin{claim*} 
$\gamma \in X_{2l}$. 
\end{claim*} 
\begin{proof} 
By the definition of $X_{2l}=X^{\alpha,\beta,F}_{{\bf m},{\bf n},2l,<t}$ in Definition \ref{definition X_i}, it is sufficient to show that there is some unfolding $\xi=(f,g)$ of $G_{\bf m}$, $G_{\bf n}$ with $0\sim_{\xi}2l$ and $t\notin \ran{f}$ such that $\gamma$ and $\xi$ are compatible with respect to ${\bf m}$, ${\bf n}$, $\alpha$, $\beta$, $F$. We consider the colored graph $I$ with vertex set $\{0,1\}$, vertex colors $c_I(0)=c_I(1)=1$ and edge color $c_I(0,1)=1$. 

Since $c_{\bf m}(0)=1$ and $c_{\bf m}(0,1)=1$ by the definition of $c_{\bf m}$, we can define a reduction $f$ from $I$ to $G_{\bf m}$ by 
$$f(0)=f(1)=0$$ 
$$f(0,1)=(0,1).$$ 
If $l\in\mathrm{Even}_{\bf n}$, then $c_{\bf n}(2l)=1$ and $c_{\bf n}(2l,2l+1)=1$ by the definition of $c_{\bf n}$. Therefore, we can define a reduction $g$ from $I$ to $G_{\bf n}$ by 
$$g(0)=g(1)=2l$$ 
$$g(0,1)=(2l,2l+1).$$ 
If $l\in \mathrm{Odd}_{\bf n}$, then we let  
$g(0)=g(1)=2l+1$ and leave the remaining values of $f$, $g$ unchanged. 

Thus $\xi=(f,g)$ is an unfolding of $G_{\bf m}$, $G_{\bf n}$ with $1\notin \ran{f}$. 
We have $\gamma\in D_{{\bf m},0}^{\alpha}$ by the definition of $D_{{\bf m},0}^{\alpha}$ and $F(\gamma)\in D_{{\bf n},l}^{\beta}$ by the choice of $l$. 
Hence $\gamma$ and $\xi$ are compatible with respect to ${\bf m}$, ${\bf n}$, $\alpha$, $\beta$, $F$ as witnessed by the fact that $(0,1)\sim_{\xi}(2l,2l+1)$. 
\end{proof} 

\begin{claim*}
For every $r$ with $d(\alpha,\gamma)\leq r<r^{\alpha}_t$, there is some $x\in X_{2l}$ 
with $d(\alpha,x)=r$. 
\end{claim*} 
\begin{proof} 
By the previous claim, we can assume that $r>d(\alpha,\gamma)$. 
Towards a contradiction, suppose that there is no $x\in X_{2l}$ with $d(\alpha,x)=r$. It follows that $X_{2l}\cap B_r(\alpha)=X_{2l}\cap \bar{B}_r(\alpha)$, where 
\[\bar{B}_r(\alpha)=\{x\in X\mid d(\alpha,x)\leq r\}\] 
denotes the closed ball of radius $r$ around $\alpha$ in $X$. 
Let 
$$U=X_{2l}\cap B_r(\alpha)=X_{2l}\cap\bar{B}_r(\alpha).$$ 
We have that $X_{2l}\cap B_r(\alpha)$ is open by Lemma \ref{Xi is open intro} and $X_{2l}\cap \bar{B}_r(\alpha)$ is relatively closed in $X^{\alpha}_{<t}$ by Lemma \ref{Xi is closed intro} and hence closed in $X$. 
Thus $U$ is a subset of $X^{\alpha}$ with $\gamma\in X^{\alpha}$ that is both open and closed in $X$. 
However, this contradicts the assumption that $\gamma$ is $X^{\alpha}$-positive. 
\end{proof}

\begin{claim*} 
$\bigcup_{(f,g)\in \mathbb{U}_{{\bf m}, {\bf n},2l}} \ran{f}=\NN.$ 
\end{claim*} 
\begin{proof} 
Suppose that $j\geq 0$ and $r$ is chosen with $r_{2j}^\alpha<r<r_{2j+1}^\alpha$ and $d(\alpha,\gamma)<r$. 
By the previous claim for $t=2j+1$, there is some $x\in X_{2l}$ with $d(\alpha,x)=r$. 
In particular, we have $x\in D_{{\bf n},j}^{\alpha}$ by the definition of $D_{{\bf n},j}^{\alpha}$.

By the definition of $X_{2l}=X^{\alpha,\beta,F}_{{\bf m},{\bf n},2l,<t}$ in Definition \ref{definition X_i}, 
there is some unfolding $\xi=(f,g)\in\mathbb{U}_{{\bf m},{\bf n},2l}$ 
with $t\notin \ran{f}$ and $x\in X_{{\bf m},{\bf n},\xi}^{\alpha,\beta,F}$. 
Thus $x$ and $\xi$ are compatible with respect to ${\bf m}$, ${\bf n}$, $\alpha$, $\beta$, $F$. 
Together with the fact that $x\in D_{{\bf n},j}^{\alpha}$ and by the definition of compatibility in Definition \ref{definition compatible}, we thus obtain that $2j\in \ran{f}$. 
Since moreover $0\in \mathrm{ran}(f)$ by the definition of $\mathbb{U}_{{\bf m},{\bf n},2l}$ and since $\ran{f}$ is an interval in $\ZZ$ by the definition of reductions, it follows that 
$\{0,\dots,2j\}\subseteq \ran{f}$. 
\end{proof} 

The last claim completes the proof of Lemma \ref{existence of reduction implies tail equivalence}. 
\end{proof}

Given the preceding lemma, the remaining step 
is to show that $\Delta{\bf m}$, $\Delta{\bf n}$ are $\mathrm{E}_{\mathrm{tail}}$-equivalent, where  $\Delta{\bf m}$, $\Delta{\bf n}$ are as above and the equivalence relation $\mathrm{E}_{\mathrm{tail}}$ is defined as follows.


\begin{definition} 
Suppose that $\bf m$, $\bf n$ are as above. 
\begin{enumerate-(a)} 
\item 
We define an equivalence relation $\mathrm{E}_{\mathrm{tail}}$ on the set of sequences $\bf n$ as above as follows. 
Two sequences $\bf m$, $\bf n$ are \emph{$\mathrm{E}_{\mathrm{tail}}$-equivalent} 
if there is some $i_0\geq 0$ and some $j\in \ZZ$ such that $m_i=n_{i+j}$ for all $i\geq i_0$. 
\item 
Let $\Delta \bf m$ denote the sequence with values $({\Delta \bf m})_i=m_{i+1}-m_i$ for all $i\geq 0$. 
\end{enumerate-(a)} 
\end{definition}

The proof of $\mathrm{E}_{\mathrm{tail}}$-equivalence uses the next result, whose proof is postponed to Section \ref{section proof of a lemma} due to its technical nature.

\begin{lemma} \label{unfoldings with large domain implies tail equivalence} 
Suppose that $\bf m$, $\bf n$ are as above 
with $\Delta \bf m$, $\Delta \bf n$ strictly increasing. Moreover, suppose that 
$$\bigcup_{(f,g)\in \mathbb{U}_{{\bf m}, {\bf n},l}} \ran{f}=\NN$$ 
for some $l\geq 0$. 
Then $\Delta \bf m$, $\Delta \bf n$ are $\mathrm{E}_{\mathrm{tail}}$-equivalent. 
\end{lemma}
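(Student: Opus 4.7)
The plan is to analyze any unfolding via a four-bit parity invariant and to derive from it a synchronization identity that forces the block structures of $G_{\bf m}$ and $G_{\bf n}$ to agree eventually up to a shift.

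For every unfolding $\xi = (f,g) \in \mathbb{U}_{{\bf m}, {\bf n}, l}$ and every $i \in \dom{\xi}$, I would define four bits: $p_i = f(i) \bmod 2$ and $q_i = g(i) \bmod 2$ (vertex parities), together with $b_i$ and $b_i'$ equal to the parity of the $\bf m$-block containing $f(i)$ and of the $\bf n$-block containing $g(i)$, respectively (block parities). Unpacking Definition \ref{definition of Gn}, a vertex $v$ of $G_{\bf m}$ has color $1$ if and only if the parity of $v$ equals the parity of its block index; hence $c_I(i) = 1$ iff $p_i = b_i$, and analogously via $g$, $c_I(i) = 1$ iff $q_i = b_i'$. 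Comparing these yields the color invariant
\[ p_i \oplus q_i = b_i \oplus b_i' \]
at every $i \in \dom{\xi}$.

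Next I would track how these bits change across a step $i \to i+1$. Writing $\delta_i^f = f(i+1) - f(i) \in \{-1, 0, 1\}$ and $c_i^f = b_{i+1} \oplus b_i$, one has $p_{i+1} \oplus p_i = |\delta_i^f|$ (the vertex parity flips iff the walk moves) and $b_{i+1} \oplus b_i = c_i^f$ (the block parity flips iff the walk crosses a boundary), with analogous equations for $g$. Differencing the color invariant across a single step yields the synchronization identity
\[ |\delta_i^f| \oplus |\delta_i^g| = c_i^f \oplus c_i^g, \]
which gives the following trichotomy at each step: either both walks move (and either both cross a block boundary or neither does), both walks stay (and neither crosses), or exactly one walk moves, in which case that walk must cross a block boundary while the other stays put.

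For each $K$, the hypothesis supplies an unfolding $\xi_K = (f_K, g_K)$ with $2m_K \in \ran{f_K}$, so $f_K$ crosses each of the first $K$ boundaries of $G_{\bf m}$ at least once. By the synchronization identity, each such crossing of $f_K$ is paired with either a crossing of $g_K$ in either direction or a stay of $g_K$. Using strict monotonicity of $\Delta {\bf m}$ and $\Delta {\bf n}$, so that each block length occurs uniquely within its graph, I would show that between two consecutive paired crossings the walks traverse blocks of equal length, and that there is a fixed shift $j_0 \in \mathbb{Z}$ (depending only on $l$) such that the pairing is eventually the index shift $i \mapsto i + j_0$ on block indices. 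This gives $(\Delta {\bf m})_i = (\Delta {\bf n})_{i + j_0}$ for all sufficiently large $i$, which is $\mathrm{E}_{\mathrm{tail}}$-equivalence of $\Delta {\bf m}$ and $\Delta {\bf n}$.

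The hard part will be the last step: converting the synchronization principle into a rigid shift correspondence. Walks may stay, backtrack, or revisit the same boundary many times, so naive counting of crossings can fail. The strict monotonicity of $\Delta {\bf m}$ and $\Delta {\bf n}$ is essential, as it makes block lengths pairwise distinct within each graph, so that any length-matching between blocks of $G_{\bf m}$ and $G_{\bf n}$ uniquely identifies which blocks correspond and forces the correspondence to be order-preserving.
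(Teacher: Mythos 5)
Your color invariant and synchronization identity are computed correctly from Definition~\ref{definition of Gn}: indeed $c(v)=1$ iff $v$ and the block index of $v$ have the same parity, which yields $p_i\oplus q_i = b_i\oplus b_i'$ and, by differencing, your trichotomy. But the step you flag as ``the hard part'' is the entire content of the lemma, and a $\bmod\ 2$ invariant is not strong enough to carry it. Two things are missing. First, your synchronization identity uses only the vertex-color constraint; the edge-color constraint is also preserved by reductions, and it is what forces the two walks to move in \emph{lockstep} (same direction, or consistently reversed direction) whenever they remain inside fixed blocks --- this is the content of Lemma~\ref{unfolding induces reduction}. Your trichotomy is strictly weaker: ``both move or both stay'' says nothing about direction, so without the edge colors the walks can oscillate independently inside their blocks and decouple in position while all parities stay consistent.

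Second, and more fundamentally, a parity invariant cannot see block \emph{lengths}, and that is where the strict monotonicity of $\Delta{\bf m}$, $\Delta{\bf n}$ is actually used. The paper's Lemma~\ref{sum formula} establishes a quantitative fact: if the $G_{\bf m}$-walk stays in one block $[2m_s,2m_{s+1}-1]$ while the $G_{\bf n}$-walk ranges over several blocks, then the spread of $G_{\bf m}$-positions is bounded by the length of the \emph{last} $G_{\bf n}$-block visited. It is precisely this distance bound that defeats the re-crossings and backtracking you acknowledge as a danger, by showing the $G_{\bf m}$-walk cannot reach the far end $2m_{u+1}-1$ of a long block unless the $G_{\bf n}$-walk simultaneously reaches a boundary of a block of at least that length. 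From this the paper extracts only a parity statement (the block index $u$ and the least $v$ with $p_u\le q_v$ have matching parity), and a separate combinatorial argument about the sets of large block lengths forces the shift. Note also that the unfoldings $\xi_K$ your plan invokes are unrelated to one another; the paper's parity statement is designed to depend only on which vertices get reached, not on which unfolding reaches them, which is what makes it possible to combine information across $K$. Without these quantitative ingredients, your claim that ``between two consecutive paired crossings the walks traverse blocks of equal length'' does not follow from the synchronization identity.
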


We are now ready to complete the proof of the main result. 

\begin{theorem} \label{main theorem again} 
For any metric space $(X,d)$ of positive dimension, there are uncountably many Borel subsets of $(X,d)$ that are pairwise incomparable with respect to continuous reducibility. 
\end{theorem}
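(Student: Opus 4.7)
The plan is to combine Lemma \ref{existence of reduction implies tail equivalence} and Lemma \ref{unfoldings with large domain implies tail equivalence} to show that continuous reducibility between the sets $D^\alpha_{\bf m}$ and $D^\alpha_{\bf n}$ for a fixed positive element $\alpha$ forces $\Delta{\bf m}$ and $\Delta{\bf n}$ to be $\mathrm{E}_{\mathrm{tail}}$-equivalent. The theorem then follows by exhibiting uncountably many admissible sequences whose difference sequences lie in pairwise distinct $\mathrm{E}_{\mathrm{tail}}$-classes.

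First I would fix a positive element $\alpha$ of $X$, which exists by the hypothesis that $(X,d)$ has positive dimension. By the choice of $r^\alpha$ made at the beginning of this section, $\alpha$ is $X^\alpha$-positive. Each set $D^\alpha_{\bf n}$ is a countable union of annuli in $X^\alpha$ and hence Borel in $X$.

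Next I would construct an uncountable family $\mathcal{F}$ of strictly increasing sequences ${\bf n}$ in $\NN$ beginning with $0$ such that $\Delta{\bf n}$ is strictly increasing and such that the sequences $\{\Delta{\bf n} : {\bf n} \in \mathcal{F}\}$ lie in pairwise distinct $\mathrm{E}_{\mathrm{tail}}$-classes. Concretely, pick $\{x^r : r \in I\} \subseteq 2^{\NN}$ with $|I|$ uncountable that are pairwise not eventually equal (possible since each eventually-equal class is countable), set $d^r_i := 10(i+1) + x^r_i$, and let ${\bf n}^r$ be defined by $n^r_0 = 0$ and $\Delta{\bf n}^r = d^r$. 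Each $d^r$ is strictly increasing because consecutive differences are at least $9$, and distinct $d^r, d^{r'}$ are not $\mathrm{E}_{\mathrm{tail}}$-equivalent: a nonzero shift $j$ would force $|d^r_i - d^{r'}_{i+j}| \geq 10$ for large $i$, impossible because both $x^r_i, x^{r'}_{i+j} \in \{0,1\}$, while a zero shift would force $x^r$ and $x^{r'}$ eventually equal.

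Third I would show that for distinct $r, r' \in I$ the sets $D^\alpha_{{\bf n}^r}$ and $D^\alpha_{{\bf n}^{r'}}$ are incomparable. Writing ${\bf m} = {\bf n}^r$ and ${\bf n} = {\bf n}^{r'}$, suppose toward a contradiction that $F \colon X \to X$ is continuous with $D^\alpha_{\bf m} = F^{-1}[D^\alpha_{\bf n}]$. Apply Lemma \ref{existence of reduction implies tail equivalence} with $\beta := \alpha$ and $\gamma := \alpha$. Since $\alpha \in D^\alpha_{{\bf m},0} \subseteq D^\alpha_{\bf m}$ (as $r^\alpha_0 = 0$), we get $F(\alpha) \in D^\alpha_{\bf n} \subseteq X^\alpha$, so $\alpha \in Y^{\alpha,\alpha}$. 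Also $\alpha \in X^\alpha_{<1} = B_{r^\alpha_1}(\alpha)$ and $\alpha$ is $X^\alpha$-positive by construction, so the hypotheses hold. The lemma supplies some $l \geq 0$ with $\bigcup_{(f,g) \in \mathbb{U}_{{\bf m}, {\bf n}, l}} \ran{f} = \NN$, and Lemma \ref{unfoldings with large domain implies tail equivalence} then concludes that $\Delta{\bf m}$ and $\Delta{\bf n}$ are $\mathrm{E}_{\mathrm{tail}}$-equivalent, contradicting the construction. The symmetric argument rules out the reverse reduction, so the uncountable family $\{D^\alpha_{{\bf n}^r} : r \in I\}$ consists of pairwise incomparable Borel sets.

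The main obstacle is deferred: the technical heart of the argument lies in the two postponed Lemmas \ref{existence of reduction implies tail equivalence} and \ref{unfoldings with large domain implies tail equivalence} (and ultimately in Lemma \ref{Xi is open intro}). Granting them, the present proof is a direct combination together with the combinatorial construction of $\mathcal{F}$.
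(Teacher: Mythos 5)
Your proof is correct and follows the same overall strategy as the paper: fix a positive element $\alpha$, apply Lemma \ref{existence of reduction implies tail equivalence} with $\alpha = \beta = \gamma$ followed by Lemma \ref{unfoldings with large domain implies tail equivalence} to conclude that a reduction forces $\Delta{\bf m}$ and $\Delta{\bf n}$ to be $\mathrm{E}_{\mathrm{tail}}$-equivalent, and then exhibit uncountably many admissible sequences with pairwise $\mathrm{E}_{\mathrm{tail}}$-inequivalent difference sequences.

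The one genuine difference is the last step. The paper observes that the equivalence relation $E$ on the (Polish) space $A$ of admissible sequences, defined by $E_{\mathrm{tail}}$-equivalence of difference sequences, is a Borel equivalence relation with countable classes and invokes the standard perfect-set result to obtain a perfect family of pairwise $E$-inequivalent sequences. You instead give a direct construction, encoding an uncountable almost-disjoint-like family of elements of $2^{\NN}$ into the low-order digits of strictly increasing gap sequences $d^r_i = 10(i+1) + x^r_i$ and checking by hand that distinct codes are $\mathrm{E}_{\mathrm{tail}}$-inequivalent (any nonzero shift causes a gap of at least $9$, and a zero shift forces eventual equality of the codes). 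This is more elementary and self-contained, at the cost of being a little ad hoc; the paper's route makes the underlying cardinality phenomenon (countable classes, uncountable space) transparent and reusable. Both are correct; the rest of your argument, including the verification that $\alpha$ satisfies the hypotheses of Lemma \ref{existence of reduction implies tail equivalence} when $\alpha=\beta=\gamma$, matches the paper.
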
 

\begin{proof} 
Let $A$ denote the subset of the Baire space $\NN^{\NN}$ consisting of the sequences $\bf n$ beginning with $0$ such that both $\bf n$ and $\Delta{\bf n}$ are strictly increasing. Since $A$ is a closed subset of $\NN^{\NN}$, it is itself a Polish space. Moreover, we consider the equivalence relation $E$ on $A$ defined by 
\[ ({\bf m},{\bf n})\in E\Longleftrightarrow (\Delta{\bf m},\Delta{\bf n})\in E_{\mathrm{tail}}.\] 
Then $E$ is a Borel equivalence relation on $A$ whose equivalence classes are countable. Therefore, it is easy to check and follows from standard results (see e.g. \cite[Lemma 32.2]{Jech} and \cite[Theorem 8.41]{Kechris_Classical}) that there is a perfect subset of $A$ whose elements are pairwise $E$-inequivalent. 
Let $\alpha$ be any positive element of $X$, as defined in the beginning of this section. 
The claim now follows from Lemma \ref{existence of reduction implies tail equivalence} for $\alpha=\beta=\gamma$ and Lemma \ref{unfoldings with large domain implies tail equivalence}. 
\end{proof} 

Moreover, we can now easily obtain the following application of the main result. 

\begin{theorem} \label{characterization result again} 
Suppose that $(X,d)$ is a Polish metric space. 
Then the following conditions are equivalent. 
\begin{enumerate-(1)} 
\item[(a)] 
$X$ has dimension $0$. 
\item[(b)] 
$\mathsf{SLO}_{(X,\tau)}$ holds. 
\item[(c)] 
The Wadge order on the Borel subsets of $X$ is a well-quasiorder. 
\item[(d)] 
There are at most two pairwise incomparable Borel subsets of $X$. 
\item[(e)] 
There are at most countably many pairwise incomparable Borel subsets of $X$. 
\end{enumerate-(1)} 
\end{theorem}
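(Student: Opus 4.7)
The plan is to close a cycle of implications $(a)\Rightarrow(b)\Rightarrow(d)\Rightarrow(e)\Rightarrow(a)$ together with the side steps $(a)\Rightarrow(c)\Rightarrow(e)$. The single substantive input is Theorem \ref{main theorem again}, which will deliver $(e)\Rightarrow(a)$ by contrapositive; the remaining links are packaging of classical facts. The trivial implication $(d)\Rightarrow(e)$ I dispose of first.

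For $(a)\Rightarrow(b)$ I would invoke the standard characterization that any Polish space of dimension $0$ is homeomorphic to a closed subspace of the Baire space $\NN^{\NN}$ (see \cite[Theorem 7.8]{Kechris_Classical}). The usual proof of Wadge's lemma via Borel determinacy then transfers verbatim to any such closed subspace: a winning strategy for player $\pII$ in the Wadge game on $A$, $B$ induces a continuous map, and because the target is a closed subset of $\NN^{\NN}$ the strategy can be arranged so that the associated continuous function has range inside $X$. Thus $\mathsf{SLO}_{(X,\tau)}$ holds.

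Next, for $(b)\Rightarrow(d)$, I would use the observation that continuous reducibility is symmetric under complementation, since $A=f^{-1}[B]$ if and only if $X\setminus A=f^{-1}[X\setminus B]$; equivalently, $A\leq_{(X,\tau)}C$ iff $X\setminus A\leq_{(X,\tau)}X\setminus C$. Suppose for contradiction that $A$, $B$, $C$ are pairwise incomparable Borel subsets of $X$. Applying $\mathsf{SLO}$ to the pair $(A,B)$, from $A\not\leq_{(X,\tau)}B$ we get $B\leq_{(X,\tau)}X\setminus A$, and symmetrically $A\leq_{(X,\tau)}X\setminus B$, which by complementation yields $X\setminus A\leq_{(X,\tau)}B$; hence $B\sim_{(X,\tau)}X\setminus A$. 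Applying $\mathsf{SLO}$ to $(A,C)$ and using $A\not\leq_{(X,\tau)}C$ gives $C\leq_{(X,\tau)}X\setminus A\sim_{(X,\tau)}B$, contradicting the incomparability of $B$ and $C$.

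For the side implication $(a)\Rightarrow(c)$, the antichain clause of wqo follows from $(a)\Rightarrow(b)\Rightarrow(d)$ as just proved, so it remains to establish well-foundedness of the Wadge order on the Borel subsets of $X$; this is the classical Martin--Wadge theorem for Borel subsets of $\NN^{\NN}$ (again a consequence of Borel determinacy), which transfers to any closed subspace by the same argument as in $(a)\Rightarrow(b)$. For $(c)\Rightarrow(e)$, a well-quasiorder by definition contains no infinite antichain, so any family of pairwise incomparable elements is finite, hence countable. The final step $(e)\Rightarrow(a)$ is the contrapositive of Theorem \ref{main theorem again}. I expect no genuine obstacle: the entire difficulty of the characterization is packed into Theorem \ref{main theorem again}, and the remaining verifications only require keeping track of the complementation symmetry and the closed-subspace formulation of Wadge's lemma.
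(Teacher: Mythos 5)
Your proof is correct and follows essentially the same route as the paper: $(a)\Rightarrow(b)$ via the realization of a dimension-$0$ Polish space as a closed subspace of $\NN^{\NN}$ plus Borel determinacy, and the load-bearing converse direction $(e)\Rightarrow(a)$ via the contrapositive of Theorem~\ref{main theorem again}. The paper disposes of the remaining links with the single sentence ``The remaining implications follow immediately from Theorem~\ref{main theorem again}'' (having observed in the introduction that $\mathsf{SLO}$ implies there are at most two pairwise incomparable Borel sets), which is actually slightly too terse: the well-foundedness half of $(a)\Rightarrow(c)$ is not a consequence of Theorem~\ref{main theorem again} at all but of the Martin--Monk theorem, and you are right to flag it as a separate classical input. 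Your explicit write-out of $(b)\Rightarrow(d)$ using the complementation symmetry $A\leq C\iff X\setminus A\leq X\setminus C$ is exactly the ``easy to see'' step the paper elides and is argued correctly.
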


\begin{proof} 
It is well-known that any Polish metric space $(X,d)$ of dimension $0$ is homeomorphic to the set of branches $[T]$ of some subtree $T$ of the tree $\NN^{<\NN}$ of finite sequences of natural numbers, ordered by inclusion. Since the proof of Wadge's lemma (see e.g. \cite[Section 2.3]{Andretta_SLO}) works for these spaces, it follows from Borel determinacy that $\mathsf{SLO}_{(X,d)}$ holds.  
The remaining implications follow immediately from Theorem \ref{main theorem again}. 
\end{proof}


\section{Auxiliary results} \label{section: auxiliary results} 
In this section we complete the missing proofs of Lemma \ref{Xi is open intro} and \ref{unfoldings with large domain implies tail equivalence} from the previous section.


\subsection{Unfoldings and $\mathrm{E}_{\mathrm{tail}}$-equivalence
}
\label{section proof of a lemma} 


In this section we will prove Lemma \ref{unfoldings with large domain implies tail equivalence} and several other auxiliary results about unfoldings. 
We always let ${\bf m}$, ${\bf n}$ denote strictly increasing sequences in $\NN$ beginning with $0$. 
Whenever $\xi$, $\xi^*$ are unfoldings, we say that $\xi^*$ \emph{extends} $\xi$ if every pair or vertices or edges that appears in $\sim_\xi$ also appears in $\sim_{\xi^*}$. 

\begin{lemma} \label{extend unfolding} 
In the following, $\xi$, $\xi^*$ will denote unfoldings of $G_{\bf m}$, $G_{\bf n}$. 
Suppose that $\xi=(f,g)$, $k,l>0$ and $c_{\bf m}(k)=c_{\bf n}(l)$. 
If one of the conditions 
$$(k,k+1)\sim_{\xi}(l,l+1)$$ 
$$(k-1,k)\sim_{\xi}(l-1,l)$$ 
$$(k,k+1)\sim_{\xi}(l-1,l)$$ 
$$(k-1,k)\sim_{\xi}(l,l+1)$$ 
holds, then there is some $\xi^*=(f^*,g^*)$ that extends $\xi$ and satisfies the respective condition in the list 
$$(k-1,k)\sim_{\xi^*}(l-1,1)$$ 
$$(k,k+1)\sim_{\xi^*}(l,l+1)$$ 
$$(k-1,k)\sim_{\xi^*}(l,l+1)$$ 
$$(k,k+1)\sim_{\xi^*}(l-1,l)$$ 
and moreover $\ran{f^*}=\ran{f}\cup\{k\}$, $\ran{g^*}=\ran{g}\cup\{l\}$. 
\end{lemma}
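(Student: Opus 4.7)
The plan is to construct $\xi^*$ by inserting two new vertices into the finite colored graph $I$ underlying $\xi$, placed between the pair $(j,j+1)$ of successive vertices that witnesses the given hypothesis. Focusing on the first implication $(k,k+1)\sim_\xi(l,l+1)\Rightarrow(k-1,k)\sim_{\xi^*}(l-1,l)$, I would pick an edge $(j,j+1)$ of $I$ with $f(j,j+1)=(k,k+1)$ and $g(j,j+1)=(l,l+1)$, and define $I^*$ by inserting two new vertices $j',j''$ between $j$ and $j+1$, re-indexing the later vertices by $+2$ so that the vertex set of $I^*$ remains an interval in $\ZZ$. On old vertices $f^*,g^*$ agree with $f,g$ (after the re-indexing), and on the new vertices I would set $f^*(j')=f^*(j'')=k$ and $g^*(j')=g^*(j'')=l$. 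The middle new edge is then assigned $f^*(j',j'')=(k-1,k)$ and $g^*(j',j'')=(l-1,l)$, which is a legal reduction on this edge since both $f^*$-endpoints are $k$ and both $g^*$-endpoints are $l$.

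The two flanking edges $(j,j')$ and $(j'',j+1)$ each have one new endpoint-value ($k$ or $l$) and one old endpoint-value lying in $\{k,k+1\}$ or $\{l,l+1\}$, so each admits one or two valid labels in $\{(k-1,k),(k,k+1)\}$ for $f^*$ and in $\{(l-1,l),(l,l+1)\}$ for $g^*$. A short case analysis on the possible values of $(f(j),f(j+1),g(j),g(j+1))\in\{k,k+1\}^2\times\{l,l+1\}^2$ shows that in every combination one can choose the labels so that the $f^*$- and $g^*$-colors agree on each flanking edge, and moreover that at least one flanking edge can be labelled $((k,k+1),(l,l+1))$, which preserves the original witness $(k,k+1)\sim_\xi(l,l+1)$ in $\sim_{\xi^*}$. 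Since every other vertex and edge of $I$ persists unchanged in $I^*$, the resulting $\xi^*$ extends $\xi$; and because the only new $f^*$- and $g^*$-values on the inserted vertices are $k$ and $l$, we obtain $\ran{f^*}=\ran{f}\cup\{k\}$ and $\ran{g^*}=\ran{g}\cup\{l\}$.

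The key input for color consistency on the middle edge is the parity observation: by the definition of $c_{\bf m},c_{\bf n}$ the edge colors alternate along the paths $G_{\bf m},G_{\bf n}$, so the equality $c_{\bf m}(k,k+1)=c_{\bf n}(l,l+1)$ forced by $(k,k+1)\sim_\xi(l,l+1)$ is equivalent to $k\equiv l\pmod 2$, whence $c_{\bf m}(k-1,k)=c_{\bf n}(l-1,l)$; the new vertex colors agree by the standing hypothesis $c_{\bf m}(k)=c_{\bf n}(l)$. The remaining three implications are handled by the same insertion strategy with the middle edge relabelled accordingly; for the mixed pairs the relevant parity is $k\not\equiv l\pmod 2$, which again follows from the respective hypothesis via the same alternating-color observation. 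The main obstacle I expect is the bookkeeping for the $16$ combinations of endpoint values in the case analysis of the flanking edges, but the uniform assignment $f^*(j')=f^*(j'')=k$, $g^*(j')=g^*(j'')=l$ localises this to a two-option choice on each flanking edge, which is always resolvable by the parity observation together with the edge-color equalities forced by the hypothesis.
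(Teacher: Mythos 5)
Your proposal is correct and follows the paper's construction: insert two new vertices between the endpoints of the witness edge, send both new vertices to $k$ (resp.\ $l$), assign the middle edge $(k-1,k)$ (resp.\ $(l-1,l)$), and use the edge-color alternation in $G_{\bf m},G_{\bf n}$ together with the hypothesis $c_{\bf m}(k)=c_{\bf n}(l)$ for color consistency. The only difference is cosmetic: the $16$-way case analysis you anticipate for the flanking edges is unnecessary, since the paper simply assigns both flanking edges the value $(k,k+1)$ (resp.\ $(l,l+1)$) of the original witness edge, which is always legal because the old endpoint lies in $\{k,k+1\}$ (resp.\ $\{l,l+1\}$) and the inserted endpoint is $k$ (resp.\ $l$), and this uniform choice also makes the persistence of every pair of $\sim_\xi$ in $\sim_{\xi^*}$ immediate.
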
 
\begin{proof} 
Suppose that $\dom{\xi}=[u,u^*]$. We begin with the first case. 
By the assumption and by the definition of $\sim_\xi$, there is some $j\in\dom{\xi}$ with $f(j,j+1)=(k,k+1)$ and $g(j,j+1)=(l,l+1)$. 

We define the following coloring on the vertices in the interval $[u-1,u^*+1]$ and on the edges between adjacent vertices, and thereby define a colored graph $I$. 


\begin{equation*}
    c_I(i) =
    \begin{cases*}
      c_\xi(i+1) & if $i\leq j-1$ \\
      c_{\bf m}(k)=c_{\bf n}(l) & if $i\in\{j,j+1\}$ \\
      c_\xi(i-1) & if $i\geq j+2$ \\
    \end{cases*}
\end{equation*} 

\begin{equation*}
    c_I(i,i+1) =
    \begin{cases*}
      c_\xi(i+1,i+2) & if $i\leq j-1$ \\
      1-c_{\bf m}(k,k+1)=1-c_{\bf n}(l,l+1) & if $i=j$ \\
      c_\xi(i-1,i) & if $i\geq j+1$ \\
    \end{cases*}
\end{equation*} 

We further define the following maps $f^*$, $g^*$ on the vertices and edges of $I$. 

\begin{equation*}
    f^*(i) =
    \begin{cases*}
      f(i+1) & if $i\leq j-1$ \\
      k & if $i\in\{j,j+1\}$ \\
      f(i-1) & if $i\geq j+2$ \\
    \end{cases*}
\end{equation*} 


\begin{equation*}
    f^*(i,i+1) = 
    \begin{cases*}
       f(i+1,i+2) & if $i\leq j-1$ \\
      (k-1,k) & if $i=j$ \\
      f(i-1,i) & if $i\geq j+1$  \\
    \end{cases*}
\end{equation*} 


The definition of $g^*$ is obtained from the definition of $f^*$ by replacing $f$, $k$ by $g$, $l$.  
The statement that $\xi^*=(f^*,g^*)$ is an unfolding that extends $\xi$ can be checked from the definitions above. 
Moreover, the statements about the ranges of $f^*$, $g^*$ follows from the definitions above. 

We now describe the changes to the above definitions in the remaining three cases. 
Here $j$ is chosen such that $f(j,j+1)$, $g(j,j+1)$ are equal to the two given pairs that are in the relation $\sim_\xi$ by one of the conditions. 
In the definition $c_I(i,i+1) = 1-c_{\bf m}(k,k+1)=1-c_{\bf n}(l,l+1)$ for $i=j$, the arguments $(k,k+1)$, $(l,l+1)$ are replaced by the two given pairs. 
The definitions of $f^*$, $g^*$ remain the same if the first pair is $(k,k+1)$ or the second pair is $(l,l+1)$, respectively. If the first pair is $(k-1,k)$, then the definition of $f^*$ is changed to $f^*(i,i+1)=(k,k+1)$ for $i=j$, and similarly, if the second pair is $(l-1,l)$, then we let $g^*(i,i+1)=(l,l+1)$ for $i=j$. 
\end{proof}

For the statement of the next result, we fix the following notation. If $G$ is a colored graph and $V$ is a sub-interval of the vertex set of $G$, then we let $G^V$ denote the unique colored graph with vertex set $V$ that is induced by $G$. 




\begin{lemma} \label{unfolding induces reduction} 
Suppose that $\bf m$, $\bf n$ are as above, $s,t\geq 0$ and $\xi=(f,g)$ is an unfolding of $G_{\bf m}$, $G_{\bf n}$ with $$\ran{f}\subseteq [2m_s,2m_{s+1}-1]$$ $$\ran{g}\subseteq[2n_t,2n_{t+1}-1].$$ 
Then the following statements hold. 
\begin{enumerate-(1)} 
\item 
The equation 
$$(*_{i,j})\ \ \ \ \ \ 
f(i)-f(j)=(-1)^{s+t} (g(i)-g(j)) $$ 
holds for all $i,j\in\dom{\xi}$. 
\item 
We obtain a unique reduction from $G_{\bf m}^{\ran{f}}$ to $G_{\bf n}^{\ran{g}}$ by restricting $fg^{-1}$ to the vertices and edges of $G_{\bf m}^{\ran{f}}$. 
The same holds for $gf^{-1}$ when $G_{\bf m}^{\ran{f}}$ is replaced with $G_{\bf n}^{\ran{g}}$ and conversely. 
\end{enumerate-(1)} 
\end{lemma}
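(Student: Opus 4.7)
The plan is to establish (1) first and then derive (2) as an almost immediate consequence. For (1), I would reduce to the base case of consecutive indices $j = i+1$ and telescope.

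First I would record two structural facts about the restricted graphs $G_{\bf m}^{[2m_s,\, 2m_{s+1}-1]}$ and $G_{\bf n}^{[2n_t,\, 2n_{t+1}-1]}$ that are read off from Definition \ref{definition of Gn}: within the block, $c_{\bf m}(v) = 1$ if and only if $v \equiv s \pmod{2}$, and the edge colors satisfy $c_{\bf m}(u, u+1) = 1$ if and only if $u$ is even, independently of $s$; the analogous statements hold for $\bf n$ with $t$. Since any reduction $h$ satisfies $|h(i+1) - h(i)| \leq 1$, the strict alternation of vertex colors in a block gives $h(i+1) = h(i)$ if and only if $c_I(i) = c_I(i+1)$. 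Consequently $f(i+1) = f(i)$ if and only if $g(i+1) = g(i)$, which handles the case when both sides of $(*_{i, i+1})$ vanish.

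When both differences are nonzero they lie in $\{\pm 1\}$, and I would pin down the signs by combining the two facts above. The vertex color identity $c_{\bf m}(f(i)) = c_{\bf n}(g(i))$ forces $f(i) - g(i) \equiv s + t \pmod{2}$, and the edge color identity $c_{\bf m}(f(i,i+1)) = c_{\bf n}(g(i,i+1))$ forces $\min(f(i), f(i+1)) \equiv \min(g(i), g(i+1)) \pmod{2}$. Writing $\epsilon_f = f(i+1) - f(i)$ and $\epsilon_g = g(i+1) - g(i)$, the parity of each $\min$ is either that of $f(i)$ or of $f(i) + 1$ (resp.\ of $g(i)$ or of $g(i) + 1$) according to the sign, and a short arithmetic reduces the two constraints to $\epsilon_f = (-1)^{s+t}\epsilon_g$, which is the base case of $(*)$. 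Telescoping then yields $(*_{i,j})$ for all $i, j \in \dom{\xi}$.

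For (2), the equation $(*_{i,j})$ shows that $f(i) = f(j)$ implies $g(i) = g(j)$, so the assignment $\phi(f(i)) := g(i)$ gives a well-defined affine map $\phi(v) = g(i_0) + (-1)^{s+t}(v - f(i_0))$ from $\ran{f}$ onto $\ran{g}$, which is a bijection of intervals. Vertex colors are preserved automatically since $c_{\bf n}(g(i)) = c_I(i) = c_{\bf m}(f(i))$. I would extend $\phi$ to edges by sending $(u, u+1)$ to the edge of $G_{\bf n}^{\ran{g}}$ with endpoints $\phi(u), \phi(u+1)$; the same parity calculation as in (1) shows that $\min(\phi(u), \phi(u+1))$ has the same parity as $u$, so edge colors are also preserved. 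Uniqueness of $\phi$ as a reduction is immediate from the prescription $\phi(f(i)) = g(i)$, and the claim for $gf^{-1}$ is obtained by swapping the roles of $f$ and $g$.

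The main obstacle is simply bookkeeping: the sign $(-1)^{s+t}$ must be tracked consistently through both the vertex and the edge color constraints. The underlying geometric picture is clean --- $f$ and $g$ trace synchronized unit-step paths through their respective blocks, running parallel when $s+t$ is even and antiparallel when $s+t$ is odd --- but making it precise requires the careful parity analysis outlined above.
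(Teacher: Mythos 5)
Your proof is correct and follows essentially the same line of argument as the paper: establish $(*_{i,i+1})$ from the alternation of vertex colors and the parity pattern of edge colors within a single block, then telescope, and read the affine reduction in (2) off from (1). The only difference is cosmetic --- you compress the paper's exhaustive parity case analysis into the two modular identities $f(i)-g(i)\equiv s+t$ and $\min(f(i),f(i+1))\equiv\min(g(i),g(i+1))$ followed by one sign computation, which is tidier bookkeeping but not a different argument.
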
 
\begin{proof} 
To prove the first claim, suppose that $f$ is a reduction from $I$ to $G_{\bf m}$ and $g$ is a reduction from $I$ to $G_{\bf n}$, where $I$ is a finite colored graph. 
It is sufficient to prove the claim for $i\leq j$, since this implies the claim for $i\geq j$. 
We now fix $i$ and prove the formula $(*_{i,j})$ by induction on $j\geq i$. Therefore, we assume that the formula holds for some $j\geq i$ with $j+1\in\dom{\xi}$. 

\begin{case*} 
$f(j+1)=f(j)$. 
\end{case*} 
Since $f$, $g$ are reductions, the case assumption implies that  
\[c_{\bf n}(g(j+1))=c_I(j+1)=c_{\bf m}(f(j+1))=c_{\bf m}(f(j))=c_I(j)=c_{\bf n}(g(j)).\] 
Since the colors of vertices alternate in $[2n_t,2n_{t+1}-1]$ by the definition of $G_{\bf n}$, and since $g$ is a reduction, this implies that $g(j+1)=g(j)$. Thus the formula $(*_{i,j+1})$ follows immediately from the formula $(*_{i,j})$ that is given by the induction hypothesis. 

\begin{case*} 
$f(j+1)\neq f(j)$. 
\end{case*} 
We can assume that $f(j+1)>f(j)$, since the case that $f(j+1)<f(j)$ is symmetric. Using this assumption and the fact that $f$ is a reduction, it follows that $f(j+1)=f(j)+1$. 
We can further assume that $s$, $t$ are even and that $f(j)$ is even, because the remaining cases are symmetric. Since $f$, $g$ are reductions, $s$ is even, $f(j)$ is even and by the definition of $G_{\bf m}$, we have 
\setcounter{equation}{0}
\begin{equation} \label{equation 1} 
c_{\bf n}(g(j))=c_I(j)=c_{\bf m}(f(j))=1 
\end{equation} 
\begin{equation} \label{equation 3} 
c_{\bf n}(g(j),g(j+1))=c_I(j,j+1)=c_{\bf m}(f(j),f(j+1))=c_{\bf m}(f(j),f(j)+1)=1. 
\end{equation} 

Since $t$ is even and by the definition of $G_{\bf n}$, equation (\ref{equation 1}) 
implies that $g(j)$ is even and $g(j+1)$ is odd. 
Finally, since we argued that $g(j)$ is even, since $t$ is even and by the definition of $G_{\bf n}$, 
equation (\ref{equation 3}) implies that $g(j+1)=g(j)+1$. 
Since $s$, $t$ are assumed to be even, $(-1)^{s+t}=1$. Thus the formula $(*_{i,j+1})$ follows immediately from the statements $f(j+1)=f(j)+1$ and $g(j+1)=g(j)+1$. 

We now indicate the modifications to the previous proof in the situation that $f(j+1)=f(j)$ for the remaining cases. 
In all cases with $f(j+1)<f(j)$ below, we will write $f(j)-1$ instead of $f(j)+1$ in equation (\ref{equation 3}).

We first assume that $s$, $t$ have the same parity. It follows that $f(j)$, $g(j)$ have the same parity by equation (\ref{equation 1}). 
If $f(j+1)>f(j)$ and $f(j)$ is even, then equation (\ref{equation 3}) takes the value $1$. Since it follows from the case assumption that $g(j)$ is also even, this implies that $g(j+1)=g(j)+1$. 
Similarly, if $f(j+1)<f(j)$ and $f(j)$ is odd, then equation (\ref{equation 3}) takes the same value, and it follows from this and the fact that $g(j)$ is odd that $g(j+1)=g(j)-1$. 

If $f(j+1)>f(j)$ and $f(j)$ is odd, then equation (\ref{equation 3}) takes the value $0$. Since it follows from the case assumption that $g(j)$ is also odd, this implies that $g(j+1)=g(j)+1$. 
Similarly, if $f(j+1)<f(j)$ and $f(j)$ is even, then equation (\ref{equation 3}) takes the same value, $g(j)$ is even and $g(j+1)=g(j)-1$.

We now assume that $s$, $t$ have different parity and hence $f(j)$, $g(j)$ have different parity by equation (\ref{equation 1}). 
If $f(j+1)>f(j)$ and $f(i)$ is even, then (\ref{equation 3}) has the value $1$. Since $g(j)$ is then odd, this shows that $g(j+1)=g(j)-1$. 
If $f(j+1)<f(j)$ and $f(j)$ is odd, then the (\ref{equation 3}) has the same value, and it follows from this and the fact that $g(j)$ is even that $g(j+1)=g(j)+1$. 

If $f(j+1)>f(j)$ and $f(j)$ is odd, then (\ref{equation 3}) has the value $0$. Since $g(j)$ is then even, this implies that $g(j+1)=g(j)-1$. 
Finally, if $f(j+1)<f(j)$ and $f(j)$ is even, then (\ref{equation 3}) has the same value, $g(j)$ is odd and $g(j+1)=g(j)+1$.


We now prove the second claim. 
Note that it is immediate from the first claim that the restriction of $fg^{-1}$ to vertices is a well-defined map on $\ran{g}$. 
Moreover, we can assume that $s+t$ is even, since the case that $s+t$ is odd is symmetric. Then $s$, $t$ have the same parity. If $i_0\in \ran{g}$ and $g(j_0)=i_0$, then by the first claim, $fg^{-1}(i_0+i)=f(j_0)+i$ for all $i$ with $i_0+i\in \ran{g}$. 
It now follows from the fact that $f$, $g$ are reductions that 
$$fg^{-1}(i_0+i,i_0+i+1)=(f(j_0)+i,f(j_0)+i+1)$$ 
for all $i$ with $i_0+i,i_0+i+1\in \ran{g}$, and thus the described restriction of $fg^{-1}$ is a reduction. 

In the symmetric case that $i+j$ is odd, we replace $f(j_0)+i$ with $f(j_0)-i$ in the previous statements. 
\end{proof} 

By keeping the range of $f$ restricted to a single interval but allowing $g$ to range over several intervals, we obtain the following variant of the previous lemma. 


\begin{lemma} \label{sum formula} 
Suppose that $\bf m$, $\bf n$ are as above and $\xi=(f,g)$ is an unfolding of $G_{\bf m}$, $G_{\bf n}$. Moreover, suppose that $a\in \dom{\xi}$, $s,t\geq 0$ and $w\geq t$ are such that 
$$\ran{f}\subseteq [2m_s,2m_{s+1}-1]$$ 
$$\ran{g}\subseteq[g(a),2n_w-1]$$ 
$$g(a)\in [2n_t,2n_{t+1}-1].$$ 
Let
\begin{equation*}
    b_u =
    \begin{cases*}
      g(a) & if $u=t$ \\
      2n_u & if $t<u\leq w$, \\
    \end{cases*}
\end{equation*} 

\[p_u=(b_{u+1}-b_u)-1\]
for $t\leq u< w$ and 
\[\Sigma_v=\sum_{t\leq u<v} (-1)^u p_u.\] 
for $t\leq v\leq w$. 

\begin{enumerate-(1)} 
\item 
If $i\in\dom{\xi}$, let $v$, $j$ be unique with $t\leq v<w$, $j\leq p_v$ and $g(i)=b_v+j<b_{v+1}$. Then 
\[
(*_i) \ \ \ \ \ \ 
f(i)=f(a)+(-1)^s (\Sigma_v+(-1)^v j)\] 
\item \label{distance statement}
Assume that $\Delta \bf m$, $\Delta \bf n$ are strictly increasing and $t\leq v<v+1=w$. 
If $\ran{g}\subseteq [g(a),2n_w-1]$, then 
$|f(i)-f(i^*)|\leq p_v$ 
for all $i,i^*\in \dom{\xi}$. 
If moreover $\ran{g}\subseteq [g(a),2n_w-1)$, then we obtain the strict inequality $|f(i)-f(i^*)|< p_v$. 
\end{enumerate-(1)} 
\end{lemma}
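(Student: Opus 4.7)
The plan is to prove (1) by induction on the position $i \in \dom{\xi}$ as it traverses the interval, treating $f$ and $g$ via Lemma \ref{unfolding induces reduction} within a single block and via a color-boundary argument across blocks. For (2), I will use the formula in (1) to reduce the problem to bounding the range of the auxiliary integer-valued function $\phi(i) := \Sigma_{v(i)} + (-1)^{v(i)} j(i)$ on the image of $g$, and exploit the strict monotonicity of the $p_u$ implied by the hypothesis on $\Delta {\bf n}$.

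For (1), the base case $i = a$ is trivial since $\phi(a) = 0$. For the inductive step from $i$ to an adjacent vertex $i+1$ (the other direction is symmetric), there are three sub-cases determined by $g(i+1) - g(i) \in \{-1,0,1\}$. If $g(i+1) = g(i)$, then $i$ and $i+1$ share the same color in $I$, and since vertex colors in $G_{\bf m}^{[2m_s,\,2m_{s+1}-1]}$ alternate along successive positions, $f(i+1) = f(i)$ and $\phi(i+1) = \phi(i)$. If $g(i+1) - g(i) = \pm 1$ with both values in the same block $v$, applying Lemma \ref{unfolding induces reduction} to the sub-unfolding of $\xi$ obtained by restricting to $\{i,i+1\}$ yields $f(i+1) - f(i) = (-1)^{s+v}(g(i+1)-g(i))$, matching $\phi(i+1) - \phi(i) = (-1)^v(g(i+1)-g(i))$. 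Finally, if $g(i)$ and $g(i+1)$ straddle a block boundary, so that $\{g(i), g(i+1)\} = \{2n_{v+1}-1,\, 2n_{v+1}\}$ for some $v$, then since $n_{v+1}-1 \in [n_v, n_{v+1})$ and $n_{v+1} \in [n_{v+1}, n_{v+2})$ lie in blocks of opposite Even/Odd parity, a direct reading of Definition \ref{definition of Gn} gives $c_{\bf n}(2n_{v+1}-1) = c_{\bf n}(2n_{v+1})$; this forces $f(i+1) = f(i)$, and on the other hand $\phi(i+1) - \phi(i) = \Sigma_{v+1} - (\Sigma_v + (-1)^v p_v) = 0$ by the definition of $\Sigma_{v+1}$.

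For (2), by (1) we have $|f(i) - f(i^*)| = |\phi(i) - \phi(i^*)|$, so it suffices to bound the range of $\phi$ on its image. Within each block $u$ with $t \leq u \leq v$, the map $j \mapsto \Sigma_u + (-1)^u j$ is strictly monotone taking values in the interval between $\Sigma_u$ and $\Sigma_{u+1}$, so the extreme values of $\phi$ are attained among $\{\Sigma_u : t \leq u \leq v+1\}$. Because $g(a) \geq 2n_t$ and $\Delta{\bf n}$ is strictly increasing, we have $p_t < p_{t+1} < \dots < p_v$; hence the partial sums $\Sigma_u$ alternate in sign with strictly increasing absolute values, and a direct induction on $u$ shows $\max_u \Sigma_u - \min_u \Sigma_u = p_v$, giving the non-strict bound. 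For the strict inequality, if $g$ fails to reach $b_v + p_v = b_{v+1}-1$, then $\phi$ does not attain $\Sigma_{v+1}$, which is one of the two extremes, so the reachable range shrinks by at least one to $p_v - 1$.

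The principal technical obstacle will be verifying the color analysis at block boundaries in the third sub-case of the induction across the various parity combinations, together with the sign bookkeeping of $(-1)^s$, $(-1)^v$, and $(-1)^t$ in both parts, in order to ensure that the identities from Definitions \ref{definition: even and odd blocks} and \ref{definition of Gn} compose correctly to yield the formula $(*_i)$.
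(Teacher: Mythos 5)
Your proof follows essentially the same line as the paper's: part (1) is an induction along $\dom{\xi}$, applying Lemma~\ref{unfolding induces reduction} within a block and noting that $c_{\bf n}(2n_{v+1}-1)=c_{\bf n}(2n_{v+1})$ at a block boundary to force $f(i)=f(i+1)$; the paper organizes the same induction by partitioning $[a,a^*]$ into maximal subintervals on which $g$ stays inside a single block, which amounts to your three-way case split. Part (2) is likewise reduced, as in the paper, to bounding the oscillation of the auxiliary sum (the paper phrases it in terms of the ranges of $fg^{-1}$ restricted to growing initial segments).

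There is, however, one false intermediate assertion in your part (2). The absolute values $|\Sigma_u|$ need not be strictly increasing: take $t$ even, $p_t=10$, $p_{t+1}=15$; then $\Sigma_{t+1}=10$ but $\Sigma_{t+2}=10-15=-5$, so $|\Sigma_{t+2}|=5<10=|\Sigma_{t+1}|$. What is true, and what your argument actually needs, is that the even-offset subsequence $\Sigma_t,\Sigma_{t+2},\dots$ is strictly monotone in one direction while the odd-offset subsequence $\Sigma_{t+1},\Sigma_{t+3},\dots$ is strictly monotone in the opposite direction, since consecutive same-parity terms differ by $\pm(p_{u+1}-p_u)\neq 0$ and $(p_u)$ is strictly increasing. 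This already shows that the two extremes of $\{\Sigma_u : t\le u\le v+1\}$ are attained at $\Sigma_v$ and $\Sigma_{v+1}$, so $\max_u\Sigma_u-\min_u\Sigma_u=|\Sigma_{v+1}-\Sigma_v|=p_v$; the rest of your part (2), including the appeal to integer-valuedness of $\phi$ for the strict case, then goes through as written.
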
 
\begin{proof} 
To prove the first claim, suppose that $f$ is a reduction from $I$ to $G_{\bf m}$ and $g$ is a reduction from $I$ to $G_{\bf n}$, where $I$ is a finite colored graph. 

We prove the formula $(*_i)$ by induction on $i\geq a$. The proof for $i\leq a$ is symmetric (by replacing $i+1$ with $i-1$ in the equations below).  
Let $a^*=\max(\dom{\xi})$. Suppose that the formula $(*_i)$ holds for some $i$ with $a\leq i<a^*$. 

We partition $[a,a^*]$ into maximal subintervals 
$[a_0,a_0^*],\dots,[a_l,a_l^*]$ 
such that for all $j<l$, there is some $u$ with $t\leq u<w$ and 
$$\ran{g{\upharpoonright}[a_j,a_{j+1}]}\subseteq[b_u,b_{u+1}-1].$$ 
Moreover, we can assume that the intervals are ordered such that 
$a_j<a_{j+1}$ for all $j<l$. 

\begin{case*} 
$i=a_k^*$ for some $k<l$. 
\end{case*} 

By the choice of the subintervals and since $i<a$, we have $i+1=a_k^*+1=a_{k+1}$ and there is some $v$ with $t\leq v+1<w$ and 
\setcounter{equation}{0}
\begin{equation} \label{equation a1} 
\{g(i),g(i+1)\}=\{g(a_k^*),g(a_{k+1})\}=\{b_{v+1}-1,b_{v+1}\}.
\end{equation} 

Since $c_{\bf n}(b_{v+1}-1)=c_{\bf n}(b_{v+1})$ by the definition of $G_{\bf n}$, it follows from equation (\ref{equation a1}) and the fact that $f$, $g$ are reductions that 
\begin{equation} \label{equation a2} 
c_{\bf m}(f(i))=c_I(i)=c_{\bf n}(g(i))=c_{\bf n}(g(i+1))=c_I(i+1)=c_{\bf m}(f(i+1)). 
\end{equation} 

Since the colors of vertices in $[2m_s,2m_{s+1}-1]$ alternate by the definition of $G_{\bf m}$ and since $\ran{f}$ is contained in this interval by assumption, it follows from equation (\ref{equation a2}) that $f(i)=f(i+1)$. 

Since $b_{v+1}-1=b_v+p_v$ and $\Sigma_v+(-1)^v p_v=\Sigma_{v+1}$, it follows from the equation (\ref{equation a1}) that the formulas $(*_i)$ and $(*_{i+1})$ yield the same value. 
Since we argued that $f(i)=f(i+1)$ and the formula $(*_{i})$ holds by the induction hypothesis, this implies that $(*_{i+1})$ holds. 

\begin{case*} 
$i\in [a_k,a_k^*)$ for some $k\leq l$. 
\end{case*} 

Suppose that $t\leq u<w$ and $\ran{g{\upharpoonright}[a_k,a_k^*]}\subseteq [b_u,b_{u+1}-1]$. 
Moreover, let $j,j^*\leq p_u$ be the unique numbers with $g(i)=b_u+j<b_{u+1}$ and 
$g(i+1)=b_u+j^*<b_{u+1}$. 
We have 
\begin{equation} \label{equation b} 
f(i+1)-f(i)=(-1)^{s+u}(g(i+1)-g(i))=(-1)^{s+u}(j^*-j) 
\end{equation} 
by Lemma \ref{unfolding induces reduction}. 
Moreover, by the induction hypothesis $(*_i)$, we have $f(i)=f(a)+(-1)^s (\Sigma_u+(-1)^u j)$ and hence $(*_{i+1})$ by equation (\ref{equation b}).

It remains to prove the second claim. Note that by the first claim, $f g^{-1}$ defines a unique map on $\ran{g}$. 
Moreover, it follows immediately from the first claim that the restrictions of $f g^{-1}$ to sub-intervals of $\ran{g}$ of the form $[n_i,n_{i+1})$ are affine functions whose direction changes between adjacent intervals. 
Since $\Delta {\bf n}$ is strictly increasing,  the lengths $p_u=(b_{u+1}-b_u)-1$ of the intervals $[b_u,b_{u+1}-1]$ have the same property. 
It thus follows by induction on $u$ that for all $u$ with $t\leq u< w$, we have 
$$\ran{f g^{-1}{\upharpoonright}[g(a),b_{u+1}-1]}=\ran{f g^{-1}{\upharpoonright}[b_u,b_{u+1}-1]}$$
Since the length of $[b_v,b_{v+1}-1]$ is $p_v$, this implies the first distance statement. Moreover, by comparing the lengths of the last two intervals, we obtain 
$$\ran{f g^{-1}{\upharpoonright}[g(a),b_{v+1}-1)}=\ran{f g^{-1}{\upharpoonright}[b_v,b_{v+1}-1)}$$
Since the length of $[b_v,b_{v+1}-1)$ is $p_v-1$, the last distance statement follows. 
\end{proof} 

We are now ready to prove Lemma \ref{unfoldings with large domain implies tail equivalence} above, which we restate now. 

\begin{lemma} \label{tail equivalence from unfoldings} 
Suppose that $\bf m$, $\bf n$ are strictly increasing sequences of natural numbers beginning with $0$ such that also $\Delta \bf m$, $\Delta \bf n$ are strictly increasing. Moreover, suppose that 
$$\bigcup_{(f,g)\in \mathbb{U}_l^{{\bf m}, {\bf n}}} \ran{f}=\NN$$ 
for some $l\geq 0$. 
Then $\Delta\bf m$, $\Delta\bf n$ are $E_{\mathrm{tail}}$-equivalent. 
\end{lemma}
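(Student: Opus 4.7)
The plan is to read off $\Delta\bf m$ and $\Delta\bf n$ directly from the vertex colorings of $G_{\bf m}$ and $G_{\bf n}$ and then use the hypothesis, together with Lemmas \ref{unfolding induces reduction} and \ref{sum formula}, to force these sequences to agree past a finite index shift.

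First I would note that the coloring $c_{\bf n}$ strictly alternates inside each block $[2n_t, 2n_{t+1}-1]$ but repeats across every junction between $2n_t - 1$ and $2n_t$ for $t \geq 1$; hence the lengths $2\Delta n_t$ are exactly the lengths of the maximal alternating runs of $c_{\bf n}$, and symmetrically for $c_{\bf m}$. For any unfolding $\xi = (f,g)$ the identity $c_\xi(i) = c_{\bf m}(f(i)) = c_{\bf n}(g(i))$ shows that a color repeat $c_\xi(i) = c_\xi(i+1)$ arises from a fold or a block-boundary crossing on each of the two sides. Thus block boundaries of $\bf m$ inside $\ran{f}$ are paired, via the $c_\xi$ repeats they create in $\dom{\xi}$, with folds or block boundaries of $\bf n$ inside $\ran{g}$.

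Next, fix a large $s$ and apply the hypothesis to pick $\xi = (f,g) \in \mathbb{U}_{{\bf m}, {\bf n}, l}$ with $2m_{s+1} \in \ran{f}$, so that $[0, 2m_{s+1}] \subseteq \ran{f}$. Select a maximal sub-interval $J \subseteq \dom{\xi}$ on which $f(J)$ is contained in and covers the block $[2m_s, 2m_{s+1}-1]$. If $g(J)$ lies in a single block of $\bf n$, Lemma \ref{unfolding induces reduction} gives $|\ran{f|_J}| = |\ran{g|_J}|$ together with a translation-reflection identification of the two block intervals, forcing $\Delta m_s \leq \Delta n_t$ for the matched $t$; if instead $g(J)$ spans several blocks of $\bf n$, Lemma \ref{sum formula}(\ref{distance statement}) gives a bound $|\ran{f|_J}| \leq p_v < 2 \Delta n_v$ for some $v$ in the $g$-span, and again $\Delta m_s < \Delta n_v$. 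A symmetric examination of what $g$ does at the block-boundary crossings of $f$ at $2m_s$ and $2m_{s+1}$, which must be mirrored by folds or block-boundary crossings of $g$, together with the strict monotonicity of $\Delta \bf n$, forces the matching $s \mapsto t(s)$ to be monotone and eventually a shift by a fixed constant $t_0 - s_0$, yielding $\Delta m_{s_0 + j} = \Delta n_{t_0 + j}$ for all $j \geq 0$ and hence $\mathrm{E}_{\mathrm{tail}}$-equivalence.

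The main obstacle is ruling out spurious matchings arising from folds of $f$ or $g$ inside blocks, since a fold of $g$ hides a block boundary of $\bf n$ from $c_\xi$ and could in principle make very different block-length sequences compatible. The strict monotonicity of both $\Delta \bf m$ and $\Delta \bf n$ is precisely what prevents this: past the threshold where $\Delta m_s$ exceeds every earlier $\Delta n_t$, a block of $\bf n$ skipped by folding would have length larger than $\Delta m_s$, contradicting the distance bound from Lemma \ref{sum formula}(\ref{distance statement}). I would make this precise inductively, tracking the matching across consecutive blocks and using Lemma \ref{extend unfolding} to enlarge $\xi$ whenever the initial choice fails to expose the relevant boundary behavior.
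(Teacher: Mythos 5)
Your high-level plan is on the right track: reading block lengths from the maximal alternating runs of the coloring, invoking Lemma \ref{unfolding induces reduction} for the within-block rigidity and Lemma \ref{sum formula}(\ref{distance statement}) for cross-block bounds, and using strict monotonicity of $\Delta{\bf m}$, $\Delta{\bf n}$ to kill spurious folds. These are all ingredients of the paper's proof. But there is a genuine gap at the crucial step: you assert that the matching $s\mapsto t(s)$ "forces" an eventual shift with $\Delta m_{s_0+j}=\Delta n_{t_0+j}$, yet your argument only produces one-sided bounds ($\Delta m_s\leq \Delta n_{t}$ or $\Delta m_s<\Delta n_v$). One-sided bounds and monotone matching alone do not give equality of the matched block lengths, which is what tail-equivalence requires; there is no reason in what you wrote for the matching to be order-preserving, bijective, or length-preserving simultaneously.

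What you are missing is the \emph{parity} argument, which is the backbone of the paper's proof. Set $p_u=2\Delta m_u-1$, $q_v=2\Delta n_v-1$, and let $\Phi$, $\Psi$ be the sets of $p_u$, $q_v$ above the threshold $\max(p_0,q_t)$ where $l\in[2n_t,2n_{t+1}-1]$. The paper's key claim is: if $p_u\in\Phi$ and $v$ is least with $p_u\leq q_v$, then $u\equiv v\pmod 2$. This comes from matching the \emph{color} of the block-end vertex $a=2m_{u+1}-1$ to the color of $b=2n_v+p_u$ under the unfolding (proving $g(j^*)=b$ whenever $f(j^*)=a$ is the hard technical step, using Lemma \ref{sum formula} twice on a careful interval decomposition), and $c_{\bf m}(2m_u)=c_{\bf n}(2n_v)$ exactly records the parity of $u$ and $v$. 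Once you have this parity constraint, the finish is short: the least $v$ with $p_u\leq q_v$ must satisfy $q_v<p_{u+1}$ (else $u+1$ would also be matched to $v$, giving the opposite parity), and if $p_u<q_v$ strictly, then $w=u+1$ is least with $q_v\leq p_w$ and the symmetric claim gives $v\equiv u+1$, again contradicting parity. Hence $p_u=q_v$, so $\Phi=\Psi$ and the two $\Delta$-sequences agree above a shift. Without this parity bookkeeping, your approach cannot rule out, for example, $\Delta{\bf m}=(1,3,5,\dots)$ against $\Delta{\bf n}=(2,4,6,\dots)$, where all your $\leq$-type estimates and monotonicity considerations are satisfied but the sequences are not tail-equivalent.

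Your closing paragraph about folds is correct in spirit and is indeed part of why Lemma \ref{sum formula}(\ref{distance statement}) and strict monotonicity are needed, but it bears on the subclaim $g(j^*)=b$ rather than on producing equality of lengths. You would still need to extract the parity information from the vertex color at the matched block end and run the least-index argument to get equality; as written, the proposal stops short of that.
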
 
\begin{proof} 
Let $p_u=2(m_{u+1}-m_u)-1$ and $q_v=2(n_{v+1}-n_v)-1$ for $u,v\geq 0$. 
Moreover, suppose that $$l\in [2n_t,2n_{t+1}-1]$$ and let 
\[ \Phi=\{p_u\mid u\geq 0,\  p_u>p_0, q_t\}\] 
\[ \Psi=\{q_v\mid v\geq 0,\  q_v>p_0, q_t\}.\] 

\begin{claim*} 
If $u\geq 0$, $p_u\in \Phi$ and $v$ is least with $p_u\leq q_v$, then $u,v$ have the same parity, i.e. both are even or both are odd. 
\end{claim*} 
\begin{proof} 
Let 
$$a=2m_u+p_u=2m_{u+1}-1$$ 
and 
$$b=2n_v+p_u.$$ 


By the assumptions, there is some unfolding $\xi=(f,g)\in \mathbb{U}_l^{{\bf m},{\bf n}}$ with $a\in\ran{f}$. 
Suppose that $f$ is a reduction from $I$ to $G_{\bf m}$ and $g$ is a reduction from $I$ to $G_{\bf n}$, where $I$ is a finite colored graph. 

Since $\xi\in\mathbb{U}_l^{{\bf m},{\bf n}}$, there is some $j_0\in\dom{f}$ with $f(j_0)=0$ and $g(j_0)=l$.  Moreover, since $a\in\ran{f}$, there is some $k\in\dom{f}$ with $f(k)=a$. 
We can assume that $k>j_0$, since the case that $k<j_0$ is symmetric (by changing the order of $j_0$, $j$, $j_*$, $j^*$, $k$ below). 

The next subclaim follows almost immediately from the definitions. 

\begin{subclaim*} 
$f(j_0)<2m_u\leq a$ and $g(j_0)<2n_v\leq b$. 
\end{subclaim*} 
\begin{proof} 
For the first claim, the assumption $p_u\in\Phi$ implies that $p_0< p_u$. Since $\Delta{\bf m}$ is strictly increasing, the last inequality implies that $0<u$ and hence $0\leq2m_{1}-1<2m_u$. Therefore, 
\[ f(j_0)=0< 2m_u\leq 2m_u+p_u=a.\] 

The proof of the second claim is very similar to the previous proof. 
The assumptions $p_u\in\Phi$ and $p_u\leq q_v$ imply that $q_t<p_u\leq q_v$. Since $\Delta{\bf n}$ is strictly increasing, the last inequality implies that $t<v$ and hence $l\leq2n_{t+1}-1<2n_v$. Therefore, 
\[ g(j_0)=l< 2n_v\leq 2n_v+p_u=b.\] 
\end{proof} 

Let $j^*>j_0$ be least with $f(j^*)=a$ or $g(j^*)=b$. We can assume that $f(j^*)=a$, since the case that $g(j^*)=b$ is symmetric (by replacing $f$, $a$, $m_u$, $p_u$ with $g$, $b$, $n_v$, $q_v$ and conversely). 

\begin{subclaim*} 
$g(j^*)=b$. 
\end{subclaim*} 
\begin{proof} 
Suppose that $g(j^*)\neq b$. By the choice of $j^*$ and since $g(j_0)<b$ by the previous subclaim, we have $g(j)<b$ for all $j\in[j_0,j^*]$. 

Let $j<j^*$ be minimal such that $f(i)\in [2m_u,a]$ for all $i\in[j,j^*]$. 
Since $f(j_0)<2m_u$ by the previous subclaim, the choice of $j$ implies that $j>j_0$ and $f(j)=2m_u$. 
We further have $q_w<p_u$ for all $w<v$ by the choice of $v$, and moreover $\ran{g{\upharpoonright}[j_0,j^*]}$ is bounded strictly below $b=2n_v+p_u$, as we argued above. 

By the definition of $j^*$, we can now apply Lemma \ref{sum formula} to restrictions of $f$ to sub-intervals of $[j,j^*]$. 
Let $j_*<j^*$ be maximal with $g(j_*)=g(j)$. Since the last equation holds for $j_*=j$, we have $j\leq j_*$. 

We first assume that $j<j_*$. 
We then choose some $a^*\in [j,j_*]$ with $g(a^*)$ least among all $g(i)$ for $i\in[j,j_*]$. 
We further consider the unique $t^*$, $w$ such that the minimal and maximal values of $g(i)$ for $i\in[j,j_*]$ are elements of $[2n_{t^*},2n_{t^*+1}-1]$ and $[2n_w,2n_{w+1}-1]$, respectively. 
The sum formula in the first claim of Lemma \ref{sum formula} can now be applied to the unfolding that is given by the restrictions of $f$, $g$ to $[j,j_*]$ instead of $f$, $g$ and to  the parameters $u$, $t^*$, $w$, $a^*$ instead of $s$, $t$, $w$, $a$ above. Since $g(j_*)=g(j)$ by the choice of $j_*$, it follows immediately from the sum formula that $f(j_*)=f(j)=2m_u$, similar to the argument in the end of the proof of Lemma \ref{sum formula}. 

We now choose some $a_*\in [j_*,j^*]$ with $g(a_*)$ least among all $g(i)$ for $i\in[j_*,j^*]$ and consider the unique $t_*$ with $g(a_*)\in [2n_{t_*},2n_{t_*+1}-1]$. 
We then apply the second claim of Lemma \ref{sum formula} to the unfolding that is given by the restrictions of $f$, $g$ to $[j_*,j^*]$ instead of $f$, $g$ and to the parameters $u$, $t_*$, $v$, $v+1$, $a_*$ instead of $s$, $t$, $v$, $w$, $a$ above. 

To apply this, the following conditions are satisfied. 
The range of the restriction of $g$ to $[j_*,j^*]$ is a subset of $[g(a^*),2n_{v+1}-1)$, since our assumption $p_u\leq q_v$ and the argument in the beginning of the proof of this subclaim imply that the range is strictly bounded by $b=2n_v+p_u\leq 2n_v+q_v=2n_{v+1}-1$. 
Moreover its minimum is $g(a_*)$ by the definition of $a_*$, $t_*$. 

Hence $|f(i)-f(i^*)|<p_u$ for all $i, i^*\in [j_*,j^*]$. However, this contradicts the assumption $f(j^*)=a=2m_u+p_u$ that was made before the current subclaim, and the fact that $f(j_*)=2m_u$ as we have just proved. 

If $j=j_*$, then the argument in the last paragraph leads to the  same contradiction. 
%
\end{proof}

Before we complete the proof, we discuss the changes that have to be made above in the symmetric cases. The proof of the first subclaim will remain unchanged. 

We first assume that $k<j_0$ 
and describe how we will define numbers $j$, $j_*$, $j^*$ with $k<j^*<j_*<j<j_0$ in reverse order compared to above. 
Before the second subclaim, we now let $j^*<j_0$ be largest instead of $j^*>j_0$ least with $f(j^*)=a$ or $g(j^*)=b$. 
In the proof of the second subclaim, we consider the interval $[j^*,j_0]$ instead of $[j_0,j^*]$, choose $j>j^*$ maximal instead of $j<j^*$ minimal and $j_*>j^*$ minimal instead of $j_*<j^*$ maximal with the same properties as above. 
If $j>j^*$, we first apply Lemma \ref{sum formula} to the restrictions of $f$, $g$ to the interval $[j^*,j]$ and then to the interval $[j^*,j_*]$ with the same parameters as above. 
If $j=j^*$, then as above, the argument for the last application of Lemma \ref{sum formula} is sufficient. 

We now consider the changes for the symmetric case $g(j^*)=b$ to the assumption $f(j^*)=a$ made before the second subclaim in the case $k>j_0$. 
This is obtained by replacing $f$, $a$, $m_u$, $p_u$ with $g$, $b$, $n_v$, $q_v$ and conversely as follows. 
The changes for the case $k<j_0$ are then obtained by again changing the order of $j_0$, $j$, $j_*$, $j^*$, $k$, as we have just described. 

The statement of the second subclaim is now that $f(j^*)=a$. In the proof of the second subclaim, we assume towards a contradiction that $f(j^*)\neq a$. It follows as above that $f(j)<a$ for all $j\in[j_0,j^*]$. 
We then work with the condition $g(i)\in [2n_v,b]$ instead of $f(i)\in [2m_u,a]$. 
In the proof above, we used that $q_w<p_u$ for all $w<v$ by the choice of $v$. Since $p_u\leq q_v$ by the choice of $v$ and $p_w<p_u$ for all $w<u$, we now have the symmetric property that $p_w<q_v$ for all $w<u$. 

In the following, the role of $a^*$ is replaced with $b^*$, where $b^*$ is defined using $f$ instead of $g$. Moreover, the role of $f$ and $g$ are interchanged here and in the rest of the proof. 
Lemma \ref{sum formula} is now applied to the unfolding that is given by the restrictions of $g$, $f$ to $[j,j_*]$ instead of $f$, $g$ and to the parameters $v$, $t^*$, $w$, $b^*$ 
instead of $s$, $t$, $w$, $a$. 
We then define $b_*$ via $f$ instead of $a_*$ via $g$. 
The second application of Lemma \ref{sum formula} is to the unfolding that is given by the restrictions of $g$, $f$ to $[j_*,j^*]$ instead of $f$, $g$ and to the parameters $v$, $t_*$, $u$, $u+1$, $b_*$ 
instead of $s$, $t$, $v$, $w$, $a$. 

This completes the description of the symmetric cases and we now complete the proof of the claim.

Since $f(j^*)=a$ by the assumption and $g(j^*)=b$ by the previous subclaim, we have 
\[c_{\bf m}(a)=c_{\bf m}(f(j^*))=c_I(j^*)=c_{\bf n}(g(j^*))=c_{\bf n}(b).\] 
We have $a=2m_u+p_u<2m_{u+1}$ and $b=2n_v+p_u<2n_{v+1}$, since $p_u\leq q_v$ by the assumptions of the claim. 
Thus the last equation and the definition of $G_{\bf m}$, $G_{\bf n}$ imply that $c_{\bf m}(2m_u)=c_{\bf n}(2n_v)$. 
By the definition of $G_{\bf m}$, $G_{\bf n}$ in Definition \ref{definition of Gn}, this implies that $m_u\in \Even_{\bf m}\Longleftrightarrow n_v\in \Even_{\bf n}$. 
However, $m_u\in \Even_{\bf m}$ holds if and only if $u$ is even by the definition of $\Even_{\bf m}$ in Definition \ref{definition: even and odd blocks}, and the same holds for $n_v$, $\Even_{\bf n}$ and $v$. Hence $u$, $v$ have the same parity. 
\end{proof} 

We now let  
$$u_0=\min\{u\geq 0\mid p_u\in \Phi\}$$ 
$$v_0=\min\{v\geq 0\mid q_v\in \Psi\}.$$ 

\begin{claim*} 
$\Delta{\bf m}$, $\Delta{\bf n}$ are $E_{\mathrm{tail}}$-equivalent. 
\end{claim*} 
\begin{proof} 
It follows immediately from the definitions of $\Phi$ and $\Psi$ that the sequences 
$${\bf p}=\langle p_u\mid u\geq u_0\rangle$$ 
$${\bf q}=\langle q_v\mid v\geq v_0\rangle$$ 
enumerate $\Phi$ and $\Psi$, respectively. 
By the definitions of $p_u$ and $q_v$ and since $\Delta{\bf m}$, $\Delta{\bf n}$ are strictly increasing by the assumption, the sequences $\bf p$, $\bf q$ are strictly increasing. 

We now show that $\Phi=\Psi$. By the properties of $\bf p$, $\bf q$ that were just stated, this implies that $p_{u_0+j}=q_{v_0+j}$ for all $j\geq 0$, and hence $\Delta {\bf m}$, $\Delta{\bf n}$ are $E_{\mathrm{tail}}$-equivalent, proving the claim. We will only show that $\Phi\subseteq \Psi$, since the proof of the other inclusion is symmetric. 

Suppose that $p_u\in\Phi$ and $v$ is least with $p_u\leq q_v$. Then $u$, $v$ have the same parity by the first claim. 

\begin{subclaim*} 
$q_v<p_{u+1}$. 
\end{subclaim*} 
\begin{proof} 
Towards a contradiction, suppose that $p_{u+1}\leq q_v$. 
By the choice of $v$ as least with $p_u\leq q_v$, this implies that $v$ is also least with $p_{u+1}\leq q_v$. 
Then $u+1$, $v$ have the same parity by the first claim. But this contradicts the fact that $u$, $v$ have the same parity. 
\end{proof} 

To complete the proof of the claim, it is sufficient to show that $p_u=q_v$. Towards a contradiction, suppose that $p_u<q_v$. Since $p_u<q_v<p_{u+1}$ by the previous subclaim, $w=u+1$ is least with $q_v<p_w$. 
Note that the proof of the first claim also works for $\Psi$, $\bf n$, $\bf m$ instead of $\Phi$, $\bf m$, $\bf n$ by switching the roles of $p_0$ and $q_t$. 
Therefore $u+1$, $v$ have the same parity. 
However, this contradicts the fact that $u$, $v$ have the same parity. 
\end{proof} 

The statement of the previous claim 
completes the proof of Lemma \ref{tail equivalence from unfoldings}. 
\end{proof}


\subsection{The compatibility range 
is open and closed
}
\label{section proofs Xi is open and closed} 
In this section we will prove Lemma \ref{Xi is open intro} and 
two auxiliary results. 

As stated before Definition \ref{definition compatible}, we always assume the following situation. 
We assume that $\alpha$ is a positive and $\beta$ is an arbitrary element of $X$. 
The sequences $\bf m$, $\bf n$ are strictly increasing sequences in $\NN$ beginning with $0$ as above and $F\colon X\rightarrow X$ is a continuous function with $D^\alpha_{\bf m}\cap Y^{\alpha,\beta}=F^{-1}[D^\beta_{\bf n}]\cap Y^{\alpha,\beta}$, where $Y^{\alpha,\beta}=X^{\alpha}\cap F^{-1}[X^{\beta}]$. 
From now on, we additionally assume that $\Delta{\bf m}$, $\Delta{\bf n}$ are strictly increasing.


\begin{definition} \label{definition: small} 
Suppose that $\alpha$, $\beta$, $F$ are as above and $x\in Y^{\alpha,\beta}$. 
A subset $U$ of $Y^{\alpha,\beta}$ is called \emph{$(\alpha,\beta)$-small at $x$} if $x\in U$ and the following conditions hold for all $i,j\in \mathbb{N}$ with $i<j$ and 
$A=(i,j)$. 
\begin{enumerate-(a)} 
\item 
If $x\in C_A^\alpha$, then $U\subseteq C_A^\alpha$. 
\item 
If $F(x)\in C_A^\beta$, then $F[U]\subseteq C_A^\beta$. 
\end{enumerate-(a)} 
\end{definition} 

The next lemma is almost immediate from the previous definition. 

\begin{lemma} \label{existence of small open set} 
Suppose that $\alpha$, $\beta$ are as above, $F\colon X\rightarrow X$ is continuous and $x\in Y^{\alpha,\beta }$. 
Then there is an open subset $U$ of $Y^{\alpha,\beta}$ 
that is $(\alpha,\beta)$-small at $x$. 
\end{lemma}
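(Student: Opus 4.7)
The idea is that although infinitely many annular regions $C^\alpha_{(i,j)}$ may contain $x$, their intersection is a single open set whose structure depends only on the location of $d(\alpha,x)$ relative to the sequence $\mathbf{r}^\alpha$. I would first identify this minimal annular region, handle the analogous region for $F(x)$ on the $\beta$-side, and then pull back by $F$ using continuity.

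\textbf{Step 1: Minimal annular regions.} Fix $x \in Y^{\alpha,\beta}$ and set $r = d(\alpha,x)$. Perform the following case analysis.
\begin{enumerate-(i)}
\item If $r = 0$, i.e., $x = \alpha$, then the pairs $(i,j)$ with $i < j$ and $x \in C^\alpha_{(i,j)}$ are exactly those with $i = 0$, and their intersection is $C^\alpha_{(0,1)} = X^\alpha_{<1}$.
\item If $0 < r < r^\alpha$ and $r \notin \{r^\alpha_t : t \geq 0\}$, let $i^*$ be the largest index with $r^\alpha_{i^*} < r$ and $j^*$ the smallest index with $r < r^\alpha_{j^*}$; both exist since $\mathbf{r}^\alpha$ is strictly increasing with supremum $r^\alpha$. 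Then $x \in C^\alpha_{(i,j)}$ iff $i \le i^*$ and $j \ge j^*$, and the intersection of all such $C^\alpha_{(i,j)}$ is $C^\alpha_{(i^*,j^*)}$.
\item If $r = r^\alpha_{i_0}$ for some $i_0 > 0$, then no open interval $(r^\alpha_i, r^\alpha_j)$ contains $r$, so no $C^\alpha_{(i,j)}$ contains $x$.
\end{enumerate-(i)}
In the first two cases call this minimal set $V$; in the third take $V = X^\alpha$. In all cases $V$ is an open subset of $X^\alpha$ containing $x$ that is contained in every $C^\alpha_{(i,j)}$ containing $x$. Perform the same analysis for $F(x) \in X^\beta$ to obtain an open subset $W$ of $X^\beta$ containing $F(x)$ that is contained in every $C^\beta_{(i,j)}$ containing $F(x)$.

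\textbf{Step 2: Pull back by $F$.} By the continuity of $F$, $F^{-1}[W]$ is open in $X$. Let
\[ U = V \cap F^{-1}[W] \cap Y^{\alpha,\beta}. \]
Since $V$ and $F^{-1}[W]$ are open in $X$, $U$ is open in $Y^{\alpha,\beta}$, and clearly $x \in U$. If $(i,j)$ is a pair with $x \in C^\alpha_{(i,j)}$, then $U \subseteq V \subseteq C^\alpha_{(i,j)}$ by the defining property of $V$; similarly, if $F(x) \in C^\beta_{(i,j)}$, then $F[U] \subseteq F[F^{-1}[W]] \subseteq W \subseteq C^\beta_{(i,j)}$. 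Thus $U$ is $(\alpha,\beta)$-small at $x$.

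The main (though minor) technical point is the case analysis in Step 1, which needs to check that the minimal intersection is still open and is characterized by the position of $d(\alpha,x)$ relative to the sequence $\mathbf{r}^\alpha$; the boundary-circle case requires noting that the $(\alpha,\beta)$-small condition becomes vacuous on the relevant side. Once this is clear, the lemma follows immediately from continuity of $F$.
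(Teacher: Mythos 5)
Your overall plan coincides with the paper's: find the smallest annulus $C^\alpha_{(i,j)}$ containing $x$, do the same for $F(x)$ on the $\beta$-side, and intersect with the pull-back. However, Step~1, case~(iii), contains a factual error that breaks the proof. If $d(\alpha,x)=r^\alpha_{i_0}$ with $i_0>0$, then the open real interval $(r^\alpha_{i_0-1},r^\alpha_{i_0+1})$ does contain $r^\alpha_{i_0}$, so $x\in C^\alpha_{(i_0-1,i_0+1)}$; more generally $x\in C^\alpha_{(i,j)}$ exactly when $i<i_0<j$. Your assertion that ``no open interval $(r^\alpha_i,r^\alpha_j)$ contains $r$'' is therefore false, and taking $V=X^\alpha$ violates condition~(a) of Definition~\ref{definition: small}, since $V\not\subseteq C^\alpha_{(i_0-1,i_0+1)}$.

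The fix is immediate: in case~(iii), take $V=C^\alpha_{(i_0-1,i_0+1)}$. In fact your $i^*$ and $j^*$ from case~(ii) remain well-defined here, with $i^*=i_0-1$ and $j^*=i_0+1$, so cases~(ii) and~(iii) merge into the single case $0<r<r^\alpha$. With that correction your argument is exactly the paper's, which assigns to every $x\in X^\alpha$ the smallest interval $A_x^\alpha=(i,j)$ with $x\in C^\alpha_{(i,j)}$ --- such an interval exists whether or not $d(\alpha,x)$ is one of the radii $r^\alpha_t$ --- and then sets $U=C^\alpha_{A_x^\alpha}\cap F^{-1}[C^\beta_{A_{F(x)}^\beta}]$.
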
 
\begin{proof} 
First, we note that $B^{\alpha}_A$ as given in Definition \ref{definition of C_A} is an open subset of $X$, whenever $A$ is an open interval in $\RR_{\geq0}$. 
By the definition of the sets $C_A^{\alpha}$, we further have that  
$X^{\alpha}$ is the union of all sets $C_{(i,j)}^\alpha$, where 
$0\leq i<j$. 
Hence we can associate to each $x\in X^{\alpha}$ the smallest interval $A_x^\alpha=(i,j)$ with 
$0\leq i<j$ and $x\in C_{(i,j)}^\alpha$. 
Then $U=C_{A_x^\alpha}^\alpha\cap F^{-1}[C_{A_{F(x)}^\beta}^\beta]$ is $(\alpha,\beta)$-small at $x$. Moreover $U$ is open, since $F$ is continuous. 
\end{proof}

If $\beta$ is as above, let $\Gamma^\beta=B^{\alpha}_{A^\beta}$, where $A^\beta=\{r_t^\alpha\mid t\geq 1\}$. 
The next lemma reduces the later proofs to the case that $x\in \Gamma^\alpha$ and $F(x)\in \Gamma^\beta$.

\begin{lemma} \label{same unfolding} 
Suppose that $\bf m$, $\bf n$, $\alpha$, $\beta$, $F$ are as above, 
$x\in Y^{\alpha,\beta}$ and $x\notin \Gamma^\alpha$ or $F(x)\notin \Gamma^\beta$. If $U$ is an open subset of $Y^{\alpha,\beta}$ that is $(\alpha,\beta)$-small at $x$ and $y\in U$, then 
\[x\in X_{{\bf m},{\bf n},\xi}^{\alpha,\beta,F} \Longleftrightarrow y\in X_{{\bf m},{\bf n},\xi}^{\alpha,\beta,F}\] 
for all unfoldings $\xi$. 
\end{lemma}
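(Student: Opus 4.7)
The plan is to reduce compatibility of a point $z$ with an unfolding $\xi$ to a small piece of combinatorial data, and then show this data is invariant on $U$. Since the sets $D^\alpha_{{\bf m},i}$ together with the sets $E^\alpha_{{\bf m},i}$ partition $X^{\alpha}$, every $z\in Y^{\alpha,\beta}$ lies in a unique block, giving an index $i_0(z)$ together with a $D$-vs-$E$ flag; similarly one obtains $j_0(z)$ and a flag for $F(z)\in X^{\beta}$. Inspecting Definition \ref{definition compatible}, compatibility of $z$ with $\xi$ is equivalent to the single $\sim_\xi$-relation between the corresponding vertex-pairs indexed by $i_0(z)$ and $j_0(z)$, with the $D$-vs-$E$ flag for $F(z)$ forced to agree with that of $z$ via the reduction identity $D^\alpha_{\bf m}\cap Y^{\alpha,\beta}=F^{-1}[D^\beta_{\bf n}]\cap Y^{\alpha,\beta}$. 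Hence it suffices to establish that $i_0$, $j_0$, and these flags agree for $x$ and $y$.

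First I would treat the case $x\notin\Gamma^\alpha$. Here $d(\alpha,x)$ avoids every sphere radius $r^\alpha_t$ with $t\geq 1$, so either $x=\alpha$ or $d(\alpha,x)\in(r^\alpha_t,r^\alpha_{t+1})$ for a unique $t\geq 0$; in either situation $x$ lies in the open annulus $C^\alpha_{(t,t+1)}$, using the $\inf(A)=0$ clause of Definition \ref{definition of C_A} to absorb $x=\alpha$. By the small condition at $A=(t,t+1)$, $U\subseteq C^\alpha_{(t,t+1)}$. Each such open annulus is contained in a single block of the $D^\alpha_{{\bf m},*}$/$E^\alpha_{{\bf m},*}$ partition, namely in $D^\alpha_{{\bf m},j}$ if $t=2j$ and in $E^\alpha_{{\bf m},j}$ if $t=2j+1$, because the block boundaries sit exactly on the spheres $\Gamma^\alpha$. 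Consequently $i_0(y)=i_0(x)$ with matching $D$-vs-$E$ flag for every $y\in U$.

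Passing to $j_0$, I would apply the reduction identity to obtain $F(x)\in D^\beta_{\bf n}\Leftrightarrow F(y)\in D^\beta_{\bf n}$, and then use the small condition on $F$. If $F(x)\notin\Gamma^\beta$, the argument mirrors the one above: $F[U]$ is confined to a single open $\beta$-annulus $C^\beta_{(2j_0,2j_0+1)}$ lying inside one block $D^\beta_{{\bf n},j_0}$, and we are done. The delicate sub-case is $F(x)\in\Gamma^\beta$, where $d(\beta,F(x))=r^\beta_s$ for some $s\geq 1$ and the smallest $C^\beta$-interval containing $F(x)$ is the larger $C^\beta_{(s-1,s+1)}$, so the small condition only confines $F[U]$ to the union of two adjacent open annuli together with the sphere of radius $r^\beta_s$ between them. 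The main obstacle is to rule out $F(y)$ landing in a neighboring $D$-block. My approach is to observe that any neighboring block $D^\beta_{{\bf n},j_0\pm 1}$ lives in an annulus separated from $(r^\beta_{s-1},r^\beta_{s+1})$ by at least one intervening $E$-annulus, and hence does not meet this range at all; combined with $F(y)\in D^\beta_{\bf n}$, this forces $F(y)\in D^\beta_{{\bf n},j_0}$, so $j_0(y)=j_0(x)$.

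Finally, the case $F(x)\notin\Gamma^\beta$ is entirely symmetric: the small condition on $F$ pins down $j_0(y)=j_0(x)$ together with the $\beta$-flag for $F(y)$ by the same open-annulus argument, the reduction identity transfers the flag back to $y$, and then the small condition on $U$ combined with the now-known $D$-vs-$E$ status of $y$ determines $i_0(y)=i_0(x)$, using the identical block-isolation trick from the previous paragraph in the sub-case $x\in\Gamma^\alpha$. In both cases the data $i_0$, $j_0$, and the flags are preserved on $U$, so $x$ and $y$ are compatible with exactly the same unfoldings, which is the desired conclusion.
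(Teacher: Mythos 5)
Your proof is correct and follows essentially the same strategy as the paper's: a case analysis on whether $x$ lies on a sphere in $\Gamma^\alpha$ and whether $F(x)$ lies on a sphere in $\Gamma^\beta$, using the smallness condition to confine $U$ and $F[U]$ to single $D$- or $E$-blocks, with the reduction identity transferring block membership between the $\alpha$- and $\beta$-sides in the sphere sub-cases. The only presentational difference is that you extract the combinatorial data $(i_0, j_0, \text{flag})$ controlling compatibility explicitly before the case analysis, whereas the paper leaves this implicit; the underlying block-confinement arguments are the same.
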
 
\begin{proof} 
Note that the assumptions imply that $x \in Y^{\alpha,\beta}$. 
We will distinguish the following cases, depending on the values of $x$ and $F(x)$. 
\begin{case*} 
$x\notin \Gamma^\alpha$ and $F(x)\notin \Gamma^\beta$. 
\end{case*} 
\begin{proof} 
We can assume that $x\in D_{\bf m}^\alpha$, since the case that $x\in E_{\bf m}^\alpha$ is symmetric. Since $x\in  D_{\bf m}^\alpha\setminus \Gamma^\alpha$, we have $x\in C_{(2j,2j+1)}^\alpha$ for some $j\geq 0$, and by the assumption that $U$ is $(\alpha,\beta)$-small at $x$, this implies that $U\subseteq C_{(2j,2j+1)}^\alpha$.

Since $x\in D^\alpha_{\bf m}\cap Y^{\alpha,\beta}=F^{-1}[D^\beta_{\bf n}]\cap Y^{\alpha,\beta}$, we have $F(x)\in D_{\bf n}^\beta$ and thus $F(x)\in D_{\bf n}^\beta\setminus \Gamma^\beta$ by the case assumption. Hence $F(x)\in C_{(2k,2k+1)}^\beta$ for some $k\geq 0$, and by the assumption that $U$ is $(\alpha,\beta)$-small at $x$, this implies that $F[U]\subseteq C_{(2k,2k+1)}^\beta$. 
The equivalence follows from the inclusions $U\subseteq C_{(2j,2j+1)}^\alpha$ and $F[U]\subseteq C_{(2k,2k+1)}^\beta$. 

We now indicate the changes to the previous proof in the remaining case that $x\in E_{\bf m}^\alpha$. Here we simply replace $D_{\bf m}^\alpha$, $D_{\bf n}^\beta$ with $E_{\bf m}^\alpha$, $E_{\bf n}^\beta$ and 
shift all indices upwards by $1$. 
\end{proof} 

\begin{case*} 
$x \notin \Gamma^\alpha$ and $F(x)\in \Gamma^\beta$. 
\end{case*} 
\begin{proof} 
We can assume that $x\in D_{\bf m}^\alpha$, since the case that $x\in E_{\bf m}^\alpha$ is symmetric. Since $x\in  D_{\bf m}^\alpha\setminus \Gamma^\alpha$, we have $x\in C_{(2j,2j+1)}^\alpha$ for some $j\geq 0$, and by the assumption that $U$ is $(\alpha,\beta)$-small at $x$, this implies that $U\subseteq C_{(2j,2j+1)}^\alpha$. 

By the assumption that $F(x)\in \Gamma^\beta$, we can assume that $d(\beta,F(x))=r_i^\beta$ and $i=2k$ for some $k\geq 1$, since the case that $i=2k+1$ for some $k\geq 0$ is symmetric. 
Since $U$ is $(\alpha,\beta)$-small at $x$ and $d(\beta,F(x))=r_{2k}^\beta$, we have $F[U]\subseteq C_{(2k-1,2k+1)}^\beta$. Therefore 
$$U\subseteq D^\alpha_{\bf m}\cap Y^{\alpha,\beta}=F^{-1}[D^\beta_{\bf n}]\cap Y^{\alpha,\beta}$$ 
and hence 
$$F[U]\subseteq C_{(2k-1,2k+1)}^\beta \cap D_{\bf n}^\beta=C_{[2k,2k+1)}^\beta$$ 
(note that in this case $k\in \mathrm{Even}_{\bf n}$, since $d(\beta,F(x))=r_{2k}^\beta$ and $F(x)\in D_{\bf n}^\beta$). 
The equivalence follows from the inclusions $U\subseteq C_{(2j,2j+1)}^\alpha$ and $F[U]\subseteq C_{[2k,2k+1)}^\beta$. 

We again indicate the necessary changes in the remaining cases. If $x\in E_{\bf m}^\alpha$, we replace $D_{\bf m}^\alpha$, $D_{\bf n}^\beta$ by $E_{\bf m}^\alpha$, $E_{\bf n}^\beta$ and $(2j,2j+1)$, $[2k,2k+1)$ by $(2j+1,2j+2)$, $(2k-1,2k)$ in the previous proof. 
If $i=2k+1$, we replace $(2k-1,2k+1)$, $[2k,2k+1)$ by $(2k,2k+2)$, $(2k,2k+1)$. 
Finally, if both $x\in E_{\bf m}^\alpha$ and $i=2k+1$, we replace $D_{\bf m}^\alpha$, $D_{\bf n}^\beta$ by $E_{\bf m}^\alpha$, $E_{\bf n}^\beta$ and shift all indices upwards by $1$. 
\end{proof} 

\begin{case*} 
$x\in \Gamma^\alpha$ and $F(x) \notin \Gamma^\beta$. 
\end{case*} 
\begin{proof} 
We can assume that $x\in D_{\bf m}^\alpha$, since the case $x\in E_{\bf m}^\alpha$ is symmetric. Since 
$D^\alpha_{\bf m}\cap Y^{\alpha,\beta}=F^{-1}[D^\beta_{\bf n}]\cap Y^{\alpha,\beta}$, 
this implies that $F(x)\in D_{\bf n}^\beta$, and hence $F(x)\in D_{\bf n}^\beta\setminus \Gamma^\beta$ by the assumption. 
Then $F(x)\in C_{(2k,2k+1)}^\beta$ for some $k\geq 0$, and since $U$ is $(\alpha,\beta)$-small at $x$, this implies that $F[U]\subseteq C_{(2k,2k+1)}^\beta$. 

By the assumption that $x\in \Gamma^\alpha$, we can assume that $d(\alpha,x)=r_i^\alpha$ and $i=2j$ for some $j\geq 1$, since the case that $i=2j+1$ for some $j\geq 0$ is symmetric. Since $U$ is $(\alpha,\beta)$-small at $x$ and $d(\alpha,x)=r_{2j}^\alpha$, we have $U\subseteq C_{(2j-1,2j+1)}^\alpha$. Since moreover 
$$F[U]\subseteq C_{(2k,2k+1)}^\beta\subseteq D_{\bf n}^\beta,$$
we have $U\subseteq F^{-1}[D^\beta_{\bf n}]\cap Y^{\alpha,\beta}=D_{\bf m}^\alpha \cap Y^{\alpha,\beta}$. 
Hence 
$$U\subseteq C_{(2j-1,2j+1)}^\alpha\cap D_{\bf m}^\alpha=C_{[2j,2j+1)}^\alpha$$
(note that in this case $j\in \mathrm{Even}_{\bf m}$, since $d(\alpha,x)=r_{2j}^\alpha$ and $x\in D_{\bf m}^\alpha$). 
The equivalence follows from the inclusions $U\subseteq C_{[2j,2j+1)}^\alpha$ and $F[U]\subseteq C_{(2k,2k+1)}^\beta$. 

We finally indicate the changes in the remaining cases. If $x\in E_{\bf m}^\alpha$, we replace $D_{\bf m}^\alpha$, $D_{\bf n}^\beta$ by $E_{\bf m}^\alpha$, $E_{\bf n}^\beta$ and $(2k,2k+1)$, $[2j,2j+1)$ by $(2k+1,2k+2)$, $(2j-1,2j)$. 
If $i=2j+1$, we replace $(2j-1,2j+1)$, $[2j,2j+1)$ by $(2j,2j+2)$, $(2j,2j+1)$. 
Finally, if both $x\in E_{\bf m}^\alpha$ and $i=2j+1$, we replace $D_{\bf m}^\alpha$, $D_{\bf n}^\beta$ by $E_{\bf m}^\alpha$, $E_{\bf n}^\beta$ and shift all indices upwards by $1$. 
\end{proof} 
Since we have covered all cases, the conclusion of Lemma \ref{same unfolding} follows. 
\end{proof}

We are now ready to prove the first part of Lemma \ref{Xi is open intro} above, which we restate now. 

\begin{lemma} \label{Xi is open} 
For all $\bf m$, $\bf n$, $\alpha$, $\beta$, $F$ as above, $s\geq 0$ and $t\geq1$, $X_{{\bf m}, {\bf n}, s, <t}^{\alpha,\beta,F}$ is an open subset of $X^{\alpha}_{<t}$. 
\end{lemma}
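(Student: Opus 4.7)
The plan is to fix a point $x\in X_{{\bf m},{\bf n},s,<t}^{\alpha,\beta,F}$ and construct an open neighborhood of $x$ that lies inside this set. By definition, $x\in X^\alpha_{<t}$ and $x$ is compatible with some unfolding $\xi=(f,g)\in\mathbb{U}_{{\bf m},{\bf n},s,<t}$; in particular, $x\in Y^{\alpha,\beta}$. Applying Lemma \ref{existence of small open set} provides an open set $U\subseteq Y^{\alpha,\beta}$ that is $(\alpha,\beta)$-small at $x$, and by intersecting with the open set $X^\alpha_{<t}$ we may assume $U\subseteq X^\alpha_{<t}$.

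The first case is that $x\notin\Gamma^\alpha$ or $F(x)\notin\Gamma^\beta$. Here Lemma \ref{same unfolding} applies directly, so every $y\in U$ is compatible with the same $\xi$, giving $U\subseteq X_{{\bf m},{\bf n},\xi}^{\alpha,\beta,F}\subseteq X_{{\bf m},{\bf n},s,<t}^{\alpha,\beta,F}$ since $\xi\in\mathbb{U}_{{\bf m},{\bf n},s,<t}$. The second and more delicate case is that $x\in\Gamma^\alpha$ and $F(x)\in\Gamma^\beta$, say $d(\alpha,x)=r^\alpha_k$ and $d(\beta,F(x))=r^\beta_l$ with $k,l\geq 1$ and $k<t$ (since $x\in X^\alpha_{<t}$). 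The $(\alpha,\beta)$-smallness of $U$ forces $U\subseteq C^\alpha_{(k-1,k+1)}$ and $F[U]\subseteq C^\beta_{(l-1,l+1)}$, so each $y\in U$ lies in one of at most nine cells according to whether $d(\alpha,y)$ belongs to $(r^\alpha_{k-1},r^\alpha_k)$, equals $r^\alpha_k$, or belongs to $(r^\alpha_k,r^\alpha_{k+1})$, and symmetrically for $F(y)$.

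The compatibility of $x$ with $\xi$, together with the parities of $k$ and $l$ and the reducibility identity $F^{-1}[D^\beta_{\bf n}]\cap Y^{\alpha,\beta}=D^\alpha_{\bf m}\cap Y^{\alpha,\beta}$, supplies one of the four $\sim_\xi$-relations listed as hypotheses in Lemma \ref{extend unfolding} and also yields the color match $c_{\bf m}(k)=c_{\bf n}(l)$. Applying that lemma three times produces extensions $\xi^*=(f^*,g^*)$ of $\xi$ witnessing each of the remaining three relations among $\{(k-1,k),(k,k+1)\}\times\{(l-1,l),(l,l+1)\}$. For every $y\in U$, the cell to which $(y,F(y))$ belongs dictates which of these four unfoldings we pair with $y$ to certify $y\in X_{{\bf m},{\bf n},\xi'}^{\alpha,\beta,F}$; the orientation of the compatibility (a D-D pairing in case (a) or an E-E pairing in case (b) of Definition \ref{definition compatible}) is forced by the reducibility identity. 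Since $\ran{f^*}=\ran{f}\cup\{k\}$ and $k<t$ while $t\notin\ran{f}$, each such $\xi^*$ still lies in $\mathbb{U}_{{\bf m},{\bf n},s,<t}$, and thus $U\subseteq X_{{\bf m},{\bf n},s,<t}^{\alpha,\beta,F}$.

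The main obstacle will be the bookkeeping in the boundary case: one must verify, for each of the nine cells and each combination of parities of $k$ and $l$ (and each of the cases $x\in D^\alpha_{\bf m}$ versus $x\in E^\alpha_{\bf m}$), that the relation inherited from the compatibility of $x$ and the relation required to witness compatibility of $y$ with the correct orientation are both correctly accounted for by Lemma \ref{extend unfolding}. Modulo this case analysis, the argument reduces cleanly to the two preparatory lemmas.
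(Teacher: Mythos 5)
Your proposal follows essentially the same route as the paper: pass to an $(\alpha,\beta)$-small open neighborhood $U$ of $x$ via Lemma~\ref{existence of small open set}, dispose of the non-sphere case $x\notin\Gamma^\alpha$ or $F(x)\notin\Gamma^\beta$ via Lemma~\ref{same unfolding}, and in the sphere case apply Lemma~\ref{extend unfolding} so that the extended unfolding(s) certify compatibility for every $y\in U$. The paper carries this out by constructing a \emph{single} extension $\xi^*$ of $\xi$ (so that $\sim_{\xi^*}$ already contains both the original relation from $\xi$ and the newly adjoined one) and splitting $y\in U$ into just two cells, with the reducibility identity $D^\alpha_{\bf m}\cap Y^{\alpha,\beta}=F^{-1}[D^\beta_{\bf n}]\cap Y^{\alpha,\beta}$ forcing the cell of $F(y)$ from the cell of $y$ exactly as you observe.

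One misstatement should be corrected: you claim that ``applying that lemma three times produces extensions \dots witnessing each of the remaining three relations.'' This is not what Lemma~\ref{extend unfolding} gives. Its four clauses are paired (clause~1 $\leftrightarrow$ clause~2, clause~3 $\leftrightarrow$ clause~4); starting from $(k,k+1)\sim_\xi(l,l+1)$ you can obtain $(k-1,k)\sim(l-1,l)$, and conversely, but you cannot obtain either of the ``cross'' relations $(k,k+1)\sim(l-1,l)$ or $(k-1,k)\sim(l,l+1)$, no matter how many times you iterate. Likewise, most of your ``nine cells'' for $(y,F(y))$ are empty: the reducibility identity, as you yourself note, forces a D--D or E--E pairing, and the $(\alpha,\beta)$-smallness of $U$ already confines $y$ and $F(y)$ to a single annulus each. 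So only the two diagonal relations ever arise, and one application of the lemma suffices. The argument you sketch is therefore salvageable without change of strategy, but as written it asserts the production of unfoldings that the cited lemma cannot produce; the clean version matches the paper's: one extension, two cells.
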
 
\begin{proof} 
Suppose that $x\in X_{{\bf m}, {\bf n}, s, <t}^{\alpha,\beta,F}\subseteq Y^{\alpha,\beta}$. 
By Lemma \ref{existence of small open set}, there is an open subset $U$ of $Y^{\alpha,\beta}$ that is $(\alpha,\beta)$-small at $x$ and it is sufficient to show that $U\subseteq X_{{\bf m}, {\bf n}, s, <t}^{\alpha,\beta,F}$. 

The claim follows from Lemma \ref{same unfolding} if $x\notin \Gamma^\alpha$ or $F(x)\notin \Gamma^\beta$. Therefore, we can assume that $x\in \Gamma^\alpha$ and $F(x)\in \Gamma^\beta$, and thus $d(\alpha,x)=r_k^\alpha$ and $d(\beta,F(x))=r_l^\beta$ for some $k,l\geq 1$. 
Since $x\in X_{{\bf m}, {\bf n}, s, <t}^{\alpha,\beta,F}\subseteq X^\alpha_{<t}$, this implies that $t>k$.

We can further assume that $k=2i$ and $l=2j$ for some $i,j\geq 1$, since the cases that $k=2i+1$ or $l=2j+1$ for some $j\geq0$ are symmetric. We can finally assume that $x\in D_{\bf m}^\alpha$, since the case that $x\in E_{\bf m}^\alpha$ is symmetric. 

Since $d(\alpha,x)=r_{2i}^\alpha$, the last assumption implies that $i\in\mathrm{Even}_{\bf m}$. Since $x\in D^\alpha_{\bf m}\cap Y^{\alpha,\beta}=F^{-1}[D^\beta_{\bf n}]\cap Y^{\alpha,\beta}$, 
we further have $F(x)\in D_{\bf n}^\beta$, and since $d(\beta,F(x))=r_{2j}^\beta$, this implies that $j\in\mathrm{Even}_{\bf n}$. It now follows from $i\in\mathrm{Even}_{\bf m}$ and $j\in\mathrm{Even}_{\bf n}$ that 
$$c_{\bf m}(k)=c_{\bf m}(2i)=c_{\bf n}(2j)=c_{\bf n}(l).$$ 
Since $x\in X_{{\bf m}, {\bf n}, s, <t}^{\alpha,\beta,F}$, there is some unfolding $\xi=(f,g)$ with $(k,k+1)\sim_{\xi}(l,l+1)$ and $0\sim_{\xi}s$. 
Now the first case of Lemma \ref{extend unfolding} yields an unfolding $\xi^*=(f^*,g^*)$ of $G_{\bf m}$, $G_{\bf n}$ 
that extends $\xi$ with $(k-1,k)\sim_{\xi^*}(l-1,l)$ and $\ran{f^*}=\ran{f}\cup\{k\}$. 
We have $0\sim_{\xi^*}s$, since $\xi^*$ extends $\xi$. 
Since moreover $t\notin \ran{f}$ and $t>k$, we have $t\notin \ran{f^*}$. 
It is thus sufficient to show the following. 

\begin{claim*} 
$U\subseteq X_{{\bf m}, {\bf n}, \xi^*}^{\alpha,\beta,F}$. 
\end{claim*} 
\begin{proof} 
Suppose that $y\in U$ is given. 
Since $U$ is $(\alpha,\beta)$-small at $x$ and moreover $d(\alpha,x)=r_{2i}^\alpha$ and $d(\beta,F(x))=r_{2j}^\beta$, we have $U\subseteq C_{(2i-1,2i+1)}^\alpha$ and $F[U]\subseteq C_{(2j-1,2j+1)}^\beta$. In particular, we have that $y\in Y^{\alpha,\beta}$. 

\begin{case*} 
$y\in C_{(2i-1,2i)}^\alpha$. 
\end{case*} 

Since $D^\alpha_{\bf m}$, $E^\alpha_{\bf m}$ are complements in $X^{\alpha*}$ and $D^\beta_{\bf n}$, $E^\beta_{\bf n}$ in $X^{\beta*}$, our assumption $D^\alpha_{\bf m}\cap Y^{\alpha,\beta}=F^{-1}[D^\beta_{\bf n}]\cap Y^{\alpha,\beta}$ immediately implies that $E^\alpha_{\bf m}\cap Y^{\alpha,\beta}=F^{-1}[E^\beta_{\bf n}]\cap Y^{\alpha,\beta}$. 
Since $C_{(2i-1,2i)}^\alpha\subseteq E_{\bf m}^\alpha$, we then have $y\in E^\alpha_{\bf m}\cap Y^{\alpha,\beta}=F^{-1}[E^\beta_{\bf n}]\cap Y^{\alpha,\beta}$ and hence $F(y)\in E_{\bf n}^\beta$. 
Since $j\in \mathrm{Even}_{\bf n}$, we thus have 
$$F(y)\in C_{(2j-1,2j+1)}^\beta\cap E_{\bf n}^\beta=C_{(2j-1,2j)}^\beta.$$
Since $(2i-1,2i)\sim_{\xi^*}(2j-1,2j)$ by the choice of $\xi^*$, 
it follows that $y\in X_{{\bf m}, {\bf n}, \xi^*}^{\alpha,\beta,F}$. 

\begin{case*} 
$y\in C_{[2i,2i+1)}^\alpha$. 
\end{case*} 

Since $C_{[2i,2i+1)}^\alpha\subseteq D_{\bf m}^\alpha$, then $y\in D^\alpha_{\bf m}\cap Y^{\alpha,\beta}=F^{-1}[D^\beta_{\bf n}]\cap Y^{\alpha,\beta}$ and hence $F(y)\in D^\beta_{\bf n}$. 
Since $j\in \mathrm{Even}_{\bf n}$, we have 
$$F(y)\in C_{(2j-1,2j+1)}^\beta\cap D_{\bf n}^\beta=C_{[2j,2j+1)}^\beta.$$
Since $(2i,2i+1)\sim_{\xi}(2j,2j+1)$ by the choice of $\xi$, 
it follows that $y\in X_{{\bf m}, {\bf n}, \xi}^{\alpha,\beta,F}\subseteq X_{{\bf m}, {\bf n}, \xi^*}^{\alpha,\beta,F}$. 
\end{proof} 

The previous claim completes the proof of Lemma \ref{Xi is open} in the case that is stated above. 
We now describe the necessary changes for the remaining cases. 

We first assume $x\in E_{\bf m}^\alpha$ instead of $x\in D_{\bf m}^\alpha$.  We then have $i\in \Odd_{\bf m}$, $j\in\Odd_{\bf n}$, $(k-1,k)\sim_\xi (l-1,l)$ and obtain some $\xi^*$ with $(k,k+1)\sim_{\xi^*}(l,l+1)$ by the second case of Lemma \ref{extend unfolding}. 
As in the two cases in the previous proof, it follows that $y\in C_{(2i,2i+1)}^\alpha\Longleftrightarrow F(y)\in C_{(2j,2j+1)}^\beta$ and therefore $y\in X_{{\bf m}, {\bf n}, \xi^*}^{\alpha,\beta,F}$ for all $y\in U$. 

Second, we assume that $k=2i+1$ and $l=2j+1$. If $x\in D_{\bf m}^\alpha$, then $i\in \Odd_{\bf m}$, $j\in\Odd_{\bf n}$, $(k-1,k)\sim_\xi (l-1,l)$ and there is some $\xi^*$ with $(k,k+1)\sim_{\xi^*}(l,l+1)$ by the second case of Lemma \ref{extend unfolding}. 
Similarly to the above proof, we obtain $y\in C_{(2i+1,2i+2)}^\alpha\Longleftrightarrow F(y)\in C_{(2j+1,2j+2)}^\beta$ and the conclusion follows. 
On the other hand, if $x\in E_{\bf m}^\alpha$, then $i\in \Even_{\bf m}$, $j\in\Even_{\bf n}$, $(k,k+1)\sim_\xi (l,l+1)$. We then obtain some $\xi^*$ with $(k-1,k)\sim_{\xi^*}(l-1,l)$ by the first case of Lemma \ref{extend unfolding} and $y\in C_{(2i,2i+1)}^\alpha\Longleftrightarrow F(y)\in C_{(2j,2j+1)}^\beta$ for all $y\in U$. 

Third, we assume that $k=2i$ and $l=2j+1$. If $x\in D_{\bf m}^\alpha$, then $i\in \Even_{\bf m}$, $j\in\Odd_{\bf n}$, $(k,k+1)\sim_\xi (l-1,l)$ and we obtain some $\xi^*$ with $(k-1,k)\sim_{\xi^*}(l,l+1)$ by the third case of Lemma \ref{extend unfolding}. 
As above, it follows that $y\in C_{(2i-1,2i)}^\alpha\Longleftrightarrow F(y)\in C_{(2j+1,2j+2)}^\beta$ for all $y\in U$. 
If $x\in E_{\bf m}^\alpha$, then $i\in \Odd_{\bf m}$, $j\in\Even_{\bf n}$, $(k-1,k)\sim_\xi (l,l+1)$. We then obtain some $\xi^*$ with $(k,k+1)\sim_{\xi^*}(l-1,l)$ by the last case of Lemma \ref{extend unfolding} and $y\in C_{(2i,2i+1)}^\alpha\Longleftrightarrow F(y)\in C_{(2j,2j+1)}^\beta$ for all $y\in U$. 

Finally, assume that $k=2i+1$ and $l=2j$. If $x\in D_{\bf m}^\alpha$, then $i\in \Odd_{\bf m}$, $j\in\Even_{\bf n}$, $(k-1,k)\sim_\xi (l,l+1)$ and we obtain some $\xi^*$ with $(k,k+1)\sim_{\xi^*}(l-1,l)$ by the last case of Lemma \ref{extend unfolding}. 
Then $y\in C_{(2i+1,2i+2)}^\alpha\Longleftrightarrow F(y)\in C_{(2j-1,2j)}^\beta$ for all $y\in U$. 
If $x\in E_{\bf m}^\alpha$, then $i\in \Even_{\bf m}$, $j\in\Odd_{\bf n}$, $(k,k+1)\sim_\xi (l-1,l)$, we obtain $\xi^*$ with $(k-1,k)\sim_{\xi^*}(l,l+1)$ by the third case of Lemma \ref{extend unfolding} and $y\in C_{(2i,2i+1)}^\alpha\Longleftrightarrow F(y)\in C_{(2j,2j+1)}^\beta$ for all $y\in U$. 
\end{proof} 

We are now ready to prove the second part of Lemma \ref{Xi is closed intro} above, which we restate now.

\begin{lemma} \label{Xi is closed} 
For all $\bf m$, $\bf n$, $\alpha$, $\beta$, $F$ as above, $s\geq 0$ and $t\geq1$, $X_{{\bf m}, {\bf n}, s, <t}^{\alpha,\beta,F}$ is a relatively closed subset of $X^{\alpha}_{<t}$. 
\end{lemma}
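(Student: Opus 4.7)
The plan is to show that $X^{\alpha}_{<t} \setminus X_{{\bf m},{\bf n},s,<t}^{\alpha,\beta,F}$ is open in $X^{\alpha}_{<t}$; equivalently, since we are in a metric space, to show that whenever $x \in X^{\alpha}_{<t}$ is the limit of a sequence $\langle x_n \mid n\in \NN\rangle$ in $X_{{\bf m},{\bf n},s,<t}^{\alpha,\beta,F}$, then $x$ itself lies in $X_{{\bf m},{\bf n},s,<t}^{\alpha,\beta,F}$. I would first apply Lemma \ref{existence of small open set} to produce an open neighborhood $U$ of $x$ in $Y^{\alpha,\beta}$ that is $(\alpha,\beta)$-small at $x$, so that for all sufficiently large $n$ we have $x_n \in U$.

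The easy case is when $x \notin \Gamma^\alpha$ or $F(x)\notin \Gamma^\beta$: Lemma \ref{same unfolding} then gives that $x$ is compatible with exactly the same unfoldings as every $y\in U$, so fixing any $x_n \in U$ and any witnessing $\xi_n \in \mathbb{U}_{{\bf m},{\bf n},s,<t}$ for $x_n$, we conclude that $\xi_n$ witnesses $x \in X_{{\bf m},{\bf n},s,<t}^{\alpha,\beta,F}$.

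In the remaining case $x\in \Gamma^\alpha$ and $F(x)\in\Gamma^\beta$, we have $d(\alpha,x) = r^\alpha_k$ and $d(\beta,F(x)) = r^\beta_l$ for some $k,l\geq 1$ with $k<t$. The strategy is dual to the openness proof: rather than extending the unfolding of a central point to cover a neighborhood, we would read off an unfolding from a nearby point $x_n\in U$ and use Lemma \ref{extend unfolding} to extend it by one vertex (namely $k$) to a $\xi^*$ that carries the $\sim$-relation required of $x$. In the representative subcase $k=2i$, $l=2j$, $x\in D^\alpha_{\bf m}$, we have $i\in \Even_{\bf m}$, $j\in \Even_{\bf n}$, and hence $c_{\bf m}(k) = c_{\bf n}(l) = 1$. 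Pick $x_n \in U$; since $U$ is $(\alpha,\beta)$-small at $x$, $x_n\in C^\alpha_{(2i-1,2i+1)}$ and $F(x_n)\in C^\beta_{(2j-1,2j+1)}$. If $x_n\in C^\alpha_{(2i-1,2i)}$, then $x_n\in E^\alpha_{{\bf m},i-1}$, smallness together with $F^{-1}[E^\beta_{\bf n}]\cap Y^{\alpha,\beta}=E^\alpha_{\bf m}\cap Y^{\alpha,\beta}$ forces $F(x_n) \in E^\beta_{{\bf n},j-1}$, and so the witnessing $\xi_n$ provides $(k-1,k)\sim_{\xi_n} (l-1,l)$; the second case of Lemma \ref{extend unfolding} then produces $\xi^* = (f^*,g^*)$ extending $\xi_n$ with $(k,k+1)\sim_{\xi^*}(l,l+1)$ and $\ran{f^*} = \ran{f_n}\cup\{k\}$. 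Because $k<t$ and $t\notin \ran{f_n}$, we have $t\notin \ran{f^*}$, and $0\sim_{\xi^*} s$ is inherited from $\xi_n$, so $\xi^* \in \mathbb{U}_{{\bf m},{\bf n},s,<t}$ and $\xi^*$ witnesses $x \in X_{{\bf m},{\bf n},s,<t}^{\alpha,\beta,F}$. If instead $x_n\in C^\alpha_{(2i,2i+1)}$, then $x_n\in D^\alpha_{{\bf m},i}$, $F(x_n)\in D^\beta_{{\bf n},j}$, and the relation $(k,k+1)\sim_{\xi_n}(l,l+1)$ already holds, so no extension is needed.

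The main bit of work — more bookkeeping than obstacle — is enumerating the parallel subcases according to the parities of $k$ and $l$ and according to whether $x\in D^\alpha_{\bf m}$ or $x\in E^\alpha_{\bf m}$, and checking that in each subcase one of the four cases of Lemma \ref{extend unfolding} supplies an extension of $\xi_n$ with the correct $\sim$-relation at the pair of edges adjacent to $k$ and $l$. This is precisely the mirror image of the closing case analysis in the proof of Lemma \ref{Xi is open}, and once the representative case above is verified it runs through uniformly.
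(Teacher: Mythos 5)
The main structure of your argument — choose a small neighborhood $U$ of the limit point $x$, pick some nearby $x_n\in U$, and either invoke Lemma \ref{same unfolding} or extend $x_n$'s witnessing unfolding via Lemma \ref{extend unfolding} — matches the paper's proof in the two cases you describe. The minor slip about $C^\alpha_{(2i,2i+1)}$ versus $C^\alpha_{[2i,2i+1)}$ in the second subcase is cosmetic; the paper's $D^\alpha_{\bf m}$ versus $E^\alpha_{\bf m}$ dichotomy is cleaner and avoids it.

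However, you have skipped a genuinely nontrivial step. To apply Lemma \ref{existence of small open set} (and, in the easy case, Lemma \ref{same unfolding}), you must first know that $x\in Y^{\alpha,\beta}$, i.e.\ that $F(x)\in X^\beta$. For each $x_u$ this is automatic from the definition of compatibility, but for the limit point $x$ it is not: the hypothesis is only $x\in X^\alpha_{<t}$, and a priori the sequence $F(x_u)$ could escape toward the boundary sphere of $X^\beta$, in which case $F(x)$ would not lie in the open ball $X^\beta$ at all and the "$(\alpha,\beta)$-small" machinery would never get started. The paper devotes a separate claim to ruling this out, and the argument is not bookkeeping: it uses the constraint $t\notin\ran{f}$ on the witnessing unfoldings to cap $\ran{f}$ inside $\{0,\dots,t-1\}$, chooses $v$ with $2m_{v+1}>t$, and then invokes the distance estimate \ref{distance statement} from Lemma \ref{sum formula} to show that $\ran{g}$ must also stay bounded — hence $F(x_u)\in X^\beta_{<k}$ for a fixed $k$ independent of $u$, and by continuity $F(x)\in X^\beta_{<k+1}$. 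This is the step that actually makes $X^{\alpha,\beta,F}_{{\bf m},{\bf n},s,<t}$ relatively closed in $X^\alpha_{<t}$ despite $Y^{\alpha,\beta}$ being open; your outline needs to supply it before the rest can proceed.
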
 
\begin{proof} 
Suppose that $x\in X^\alpha_{<t}$ and $x=\lim_{u\geq 0} x_u$ for some sequence ${\bf x}=\langle x_u\mid u\geq 0\rangle$ with $x_u\in X_{{\bf m}, {\bf n}, s, <t}^{\alpha,\beta,F}$ for all $u\geq 0$. 

\begin{claim*} 
$x\in Y^{\alpha,\beta}$. 
\end{claim*} 
\begin{proof} 
We will show that $F(x_u)\in X^\beta_{<k}$ for some $k\geq 1$ and all $u\geq 0$. Since $F$ is continuous, this implies that $F(x)\in X^\beta_{<k+1}$ and thus $x\in Y^{\alpha,\beta}$.

Towards a contradiction, suppose that this claim is false. 
We first choose some $v\geq 0$ with $2m_{v+1}>t$ and let $p_v =2(m_{v+1}-m_v)-1$. 
Moreover, let $k\geq 0$ with $2n_k> s$ and $n_{k+1}-n_k>p_v+1$. 
By the assumption, there is some $u\geq 0$ with $F(x_u)\notin X^\beta_{<2n_{k+1}}$. 
Since $x_u\in X_{{\bf m}, {\bf n}, s, <t}^{\alpha,\beta,F}$, there is some unfolding $\zeta=(f,g)$ of $G_{\bf m}$, $G_{\bf n}$ with $0\sim_\zeta s$, $t\notin \ran{f}$ and $x_u\in X_{{\bf m}, {\bf n},\zeta}^{\alpha,\beta,F}$. 
In particular, $\ran{f}\subseteq \{0,\dots,t-1\}$ and 
$x_u$, $\zeta$ are compatible with respect to ${\bf m}$, ${\bf n}$, $\alpha$, $\beta$, $F$. 
Since $F(x_u)\notin X^\beta_{<2n_{k+1}}$ and by the definition of compatibility, $\ran{g}$ contains a number that is at least $2n_{k+1}$. 
Since $s<2n_k+p_v+1<2n_{k+1}$ by the choice of $k$ and $\ran{g}$ is an interval in $\ZZ$ containing $s$, it follows that $2n_k+p_v\in \ran{g}$. Let $j\in \dom{\xi}$ with $g(j)=2n_k+p_v+1$.

Now let $[i,i^*]$ be a maximal subinterval of $\dom{\xi}$ that containins $j$ with $\ran{g{\upharpoonright}[i,i^*]}\subseteq[2n_k,2n_{k+1}-1]$. 
Since $s\in \ran{g}$ and $s<2n_k$, this must be a strict subinterval and it thus follows from the maximality that $g(i)\in \{2n_k,2n_{k+1}-1\}$ or $g(i^*)\in \{2n_k,2n_{k+1}-1\}$. 
Since $n_{k+1}-n_k>p_v$ and $g(j)=2n_k+p_v+1$ by the choice of $k$ and $j$, we thus have $|g(j)-g(i)|> p_v$ or $|g(j)-g(i^*)|> p_v$.

We further choose some $a\in [i,i^*]$ such that $f(a)$ takes the least value for such $a$. 
Since $2m_{v+1}>t$, there is a unique $l\leq v$ with $f(a)\in [2n_l, 2n_{l+1}-1]$. Let $w=v+1$. 
By \ref{distance statement} of Lemma \ref{sum formula} applied to the restrictions of $g$, $f$ to $[i,i^*]$ instead of $f$, $g$ and the parameters $k$, $l$, $v$, $w$ instead of $s$, $t$, $v$, $w$, we have that $|g(j)-g(j^*)|\leq p_v=2(m_{v+1}-m_v)-1$ for all $j^*\in [i,i^*]$, but this contradicts the inequalities above. 
\end{proof}

By the previous claim and Lemma \ref{existence of small open set}, there is an open subset $U$ of $Y^{\alpha,\beta}$ that is $(\alpha,\beta)$-small at $x$. 
Since $x=\lim_{u\geq 0} x_u\in U$ and $U$ is open, there is some $u\geq 0$ with $x_u\in U$. 
Since moreover $x_u\in X_{{\bf m}, {\bf n}, s, t}^{\alpha,\beta,F}$, there is some unfolding $\xi$ with $x_u\in X_{{\bf m}, {\bf n}, \xi}^{\alpha,\beta,F}$ and $0\sim_\xi u$. 

If $x\notin \Gamma^\alpha$ or $F(x)\notin \Gamma^\beta$, then $x\in X_{{\bf m}, {\bf n}, \xi}^{\alpha,\beta,F}\subseteq X_{{\bf m}, {\bf n}, s, <t}^{\alpha,\beta,F}$ by Lemma \ref{same unfolding}. 
We now assume that $x\in \Gamma^\alpha$ and $F(x)\in \Gamma^\beta$. Then $d(\alpha,x)=r_k^\alpha$ and $d(\beta,F(x))=r_l^\beta$ for some $k,l\geq 1$. 
Since $x\in X_{{\bf m}, {\bf n}, s, <t}^{\alpha,\beta,F}\subseteq X^\alpha_{<t}$, we have $t>k$.

We can further assume that $k=2i$ and $l=2j$ for some $i,j\geq 1$, since the cases that $k=2i+1$ or $l=2j+1$ for some $j\geq0$ are symmetric. 
We can finally assume that $x\in D_{\bf m}^\alpha$, since the case $x\in E_{\bf m}^\alpha$ is symmetric.

Since $d(\alpha,x)=
r_{2i}^\alpha$, 
the last assumption implies that $i\in\mathrm{Even}_{\bf m}$. 
Since moreover 
$F(x)\in Y^{\alpha,\beta}$ by the last claim, we have 
$x\in D^\alpha_{\bf m}\cap Y^{\alpha,\beta}=F^{-1}[D^\beta_{\bf n}]\cap Y^{\alpha,\beta}$ 
and hence $F(x)\in D_{\bf n}^\beta$. 
Since $d(\beta,F(x))=r_{2j}^\beta$, this implies that $j\in\mathrm{Even}_{\bf n}$ and hence 
$$c_{\bf m}(k)=c_{\bf m}(2i)=c_{\bf n}(2j)=c_{\bf n}(l).$$ 
Since $U$ is $(\alpha,\beta)$-small at $x$ and moreover $d(\alpha,x)=r_{2i}^\alpha$ and $d(\beta,F(x))=r_{2j}^\beta$, we have $U\subseteq C_{(2i-1,2i+1)}^\alpha$ and $F[U]\subseteq C_{(2j-1,2j+1)}^\beta$. 
It is sufficient to prove the following.

\begin{claim*} 
$x\in X_{{\bf m}, {\bf n}, s, <t}^{\alpha,\beta,F}$. 
\end{claim*} 
\begin{proof} 
We already argued that $x_u\in U\subseteq C_{(2i-1,2i+1)}^\alpha$. 

\begin{case*} 
$x_u\in D_{\bf m}^\alpha$. 
\end{case*} 
Since $i\in \mathrm{Even}_{\bf m}$, we then have $x_u\in C_{(2i-1,2i+1)}^\alpha\cap D_{\bf m}^\alpha=C_{[2i,2i+1)}^\alpha$. 
Therefore $x_u\in D^\alpha_{\bf m}\cap Y^{\alpha,\beta}=F^{-1}[D^\beta_{\bf n}]\cap Y^{\alpha,\beta}$ and thus $F(x_u)\in D_{\bf n}^\beta$. 
Since moreover $F(x_u)\in F[U]\subseteq C_{(2j-1,2j+1)}^\beta$ and 
$j\in \mathrm{Even}_{\bf n}$, we obtain 
$$F(x_u)\in C_{(2j-1,2j+1)}^\beta\cap D_{\bf n}^\beta=C_{[2j,2j+1)}^\beta.$$ 
Since  $x_u\in X_{{\bf m}, {\bf n}, \xi}^{\alpha,\beta,F}$ and we just argued that $x_u\in C_{[2i,2i+1)}^\alpha$ and $F(x_u)\in C_{[2j,2j+1)}^\beta$, 
we have $(2i,2i+1)\sim_\xi (2j,2j+1)$ by the definition of $\sim_\xi$. 
It follows that $x\in X_{{\bf m}, {\bf n}, \xi}^{\alpha,\beta,F}\subseteq X_{{\bf m}, {\bf n}, s, <t}^{\alpha,\beta,F}$. 

\begin{case*} 
$x_u\in E_{\bf m}^\alpha$. 
\end{case*} 
Note that the assumption $D^\alpha_{\bf m}\cap Y^{\alpha,\beta}=F^{-1}[D^\beta_{\bf n}]\cap Y^{\alpha,\beta}$ immediately implies that $E^\alpha_{\bf m}\cap Y^{\alpha,\beta}=F^{-1}[E^\beta_{\bf n}]\cap Y^{\alpha,\beta}$. 
Since $i\in \mathrm{Even}_{\bf m}$, we have  
$x_u\in C_{(2i-1,2i+1)}^\alpha\cap E_{\bf m}^\alpha=C_{(2i-1,2i)}^\alpha$ by the case assumption. 
Therefore $x_u\in E^\alpha_{\bf m}\cap Y^{\alpha,\beta}=F^{-1}[E^\beta_{\bf n}]\cap Y^{\alpha,\beta}$ and hence $F(x_u)\in E^\beta_{\bf n}$. 
Since moreover 
$F(x_u)\in F[U]\subseteq C_{(2j-1,2j+1)}^\beta$ and 
$j\in\mathrm{Even}_{\bf n}$, we have  
$$F(x_u)\in C_{(2j-1,2j+1)}^\beta\cap E_{\bf n}^\beta=C_{(2j-1,2j)}^\beta.$$ 
Since $x_u\in X^{{\bf m}, {\bf n},\alpha,\beta}_{\xi,F}$ and we just argued that $x_u\in C_{(2i-1,2i)}^\alpha$ and $F(x_u)\in C_{(2j-1,2j)}^\beta$, we have $(2i-1,2i)\sim_\xi (2j-1,2j)$ by the definition of $\sim_\xi$. 
We argued above that $c_{\bf m}(k)=c_{\bf m}(2i)=c_{\bf n}(2j)=c_{\bf m}(l)$. 
Therefore, the second case of Lemma \ref{extend unfolding} yields an unfolding $\xi^*$ 
that extends $\xi$ with $(2i,2i+1)\sim_{\xi^*}(2j,2j+1)$ and $\ran{\xi^*}=\ran{\xi}\cup\{k\}$. 
Since we argued in the beginning of the proof that $t>k$, it thus follows that $t\notin \ran{\xi^*}$ and $x\in X_{{\bf m}, {\bf n}, \xi^*}^{\alpha,\beta,F}\subseteq X_{{\bf m}, {\bf n}, s, <t}^{\alpha,\beta,F}$. 
\end{proof}

The previous claim completes the proof of Lemma \ref{Xi is closed} in the case above. We now discuss the necessary changes to the previous argument in the remaining cases. 

We first assume that $x\in E_{\bf m}^\alpha$. If $x_u\in D_{\bf m}^\alpha$, then $x_u\in C_{(2i,2i+1)}^\alpha$, $F(x_u)\in C_{(2j,2j+1)}^\beta$ and $x_u\in X^{{\bf m}, {\bf n},\alpha,\beta}_{\xi,F}$. 
Therefore $(2i,2i+1)\sim_\xi (2j,2j+1)$ by the definition of $\sim_\xi$. 
Since $i\in\mathrm{Odd}_{\bf m}$ and $j\in\mathrm{Odd}_{\bf n}$, we have $c_{\bf m}(2i)=c_{\bf n}(2j)$. 
By the first case of Lemma \ref{extend unfolding}, there is an unfolding $\xi^*=(f^*,g^*)$ 
that extends $\xi$ with $(2i-1,2i)\sim_{\xi^*}(2j-1,2j)$ and $\ran{f^*}=\ran{f}\cup\{k\}$. 
It thus follows that $x\in X_{{\bf m}, {\bf n}, \xi^*}^{\alpha,\beta,F}\subseteq X_{{\bf m}, {\bf n}, s, <t}^{\alpha,\beta,F}$. 
Otherwise, we have $x_u\in E_{\bf m}^\alpha$. It follows that $x_u\in C_{(2i-1,2i]}^\alpha$ and $F(x_u)\in C_{(2j-1,2j]}^\beta$. Since $x_u\in X_{{\bf m}, {\bf n}, \xi}^{\alpha,\beta,F}$, we have $(2i-1,2i)\sim_\xi (2j-1,2j)$ by the definition of $\sim_\xi$ and hence $x\in X_{{\bf m}, {\bf n}, \xi}^{\alpha,\beta,F}$.

Second, we assume that $k=2i+1$ and $l=2j+1$. Moreover, we first assume that $x\in D_{\bf m}^\alpha$. Then $i\in\Odd_{\bf m}$, $j\in \Odd_{\bf n}$ and hence $c_{\bf m}(2i+1)=c_{\bf n}(2j+1)$. 
If $x_u\in D_{\bf m}^\alpha$, then $x_u\in C_{(2i,2i+1]}^\alpha$, $F(x_u)\in C_{(2j,2j+1]}^\beta$ and hence $(2i,2i+1)\sim_\xi (2j,2j+1)$. 
It follows that $x\in X_{{\bf m}, {\bf n}, \xi}^{\alpha,\beta,F}$. 
If $x_u\in E_{\bf m}^\alpha$, then $x_u\in C_{(2i+1,2i+2)}^\alpha$, $F(x_u)\in C_{(2j+1,2j+2)}^\beta$ and hence $(2i+1,2i+2)\sim_\xi (2j+1,2j+2)$. 
By the first case of Lemma \ref{extend unfolding}, we obtain some $\xi^*$ with $(2i,2i+1)\sim_{\xi^*}(2j,2j+1)$ and $\ran{f^*}=\ran{f}\cup\{k\}$. Therefore $x\in X_{{\bf m}, {\bf n}, \xi^*}^{\alpha,\beta,F}\subseteq X_{{\bf m}, {\bf n}, s, <t}^{\alpha,\beta,F}$.

If $x\in E_{\bf m}^\alpha$, then $i\in\Even_{\bf m}$, $j\in \Even_{\bf n}$ and hence $c_{\bf m}(2i+1)=c_{\bf n}(2j+1)$. 
If $x_u\in D_{\bf m}^\alpha$, then $x_u\in C_{(2i,2i+1)}^\alpha$, $F(x_u)\in C_{(2j,2j+1)}^\beta$ and hence $(2i,2i+1)\sim_\xi (2j,2j+1)$. 
By the second case of Lemma \ref{extend unfolding}, we obtain some $\xi^*$ with $(2i+1,2i+2)\sim_{\xi^*}(2j+1,2j+2)$ and $\ran{f^*}=\ran{f}\cup\{k\}$, hence $x\in X_{{\bf m}, {\bf n}, \xi^*}^{\alpha,\beta,F}\subseteq X_{{\bf m}, {\bf n}, s, <t}^{\alpha,\beta,F}$. 
If $x_u\in E_{\bf m}^\alpha$, then $x_u\in C_{[2i+1,2i+2)}^\alpha$, $F(x_u)\in C_{[2j+1,2j+2)}^\beta$ and therefore $(2i+1,2i+2)\sim_\xi (2j+1,2j+2)$ 
and $x\in X_{{\bf m}, {\bf n}, \xi}^{\alpha,\beta,F}$.

Third, we assume that $k=2i$ and $l=2j+1$. Moreover, we first assume that $x\in D_{\bf m}^\alpha$. Then $i\in\Even_{\bf m}$, $j\in \Odd_{\bf n}$ and hence $c_{\bf m}(2i)=c_{\bf n}(2j+1)$. 
If $x_u\in D_{\bf m}^\alpha$, then $x_u\in C_{[2i,2i+1)}^\alpha$ and $F(x_u)\in C_{(2j,2j+1]}^\beta$. Therefore $(2i,2i+1)\sim_\xi (2j,2j+1)$ and $x\in X_{{\bf m}, {\bf n}, \xi}^{\alpha,\beta,F}$. 
If $x_u\in E_{\bf m}^\alpha$, then $x_u\in C_{(2i-1,2i)}^\alpha$, $F(x_u)\in C_{(2j+1,2j+2)}^\beta$ and hence $(2i-1,2i)\sim_\xi (2j+1,2j+2)$. 
By the last case of Lemma \ref{extend unfolding}, we obtain some $\xi^*$ with $(2i,2i+1)\sim_{\xi^*}(2j,2j+1)$ and $\ran{f^*}=\ran{f}\cup\{k\}$. Thus $x\in X_{{\bf m}, {\bf n}, \xi^*}^{\alpha,\beta,F}\subseteq X_{{\bf m}, {\bf n}, s, <t}^{\alpha,\beta,F}$.

If $x\in E_{\bf m}^\alpha$, then $i\in\Odd_{\bf m}$, $j\in \Even_{\bf n}$ and hence $c_{\bf m}(2i)=c_{\bf n}(2j+1)$. 
If $x_u\in D_{\bf m}^\alpha$, then $x_u\in C_{(2i,2i+1)}^\alpha$, $F(x_u)\in C_{(2j,2j+1)}^\beta$ and hence $(2i,2i+1)\sim_\xi (2j,2j+1)$. 
By the third case of Lemma \ref{extend unfolding}, we obtain some $\xi^*$ with $(2i-1,2i)\sim_{\xi^*}(2j+1,2j+2)$ and $\ran{f^*}=\ran{f}\cup\{k\}$. Hence $x\in X_{{\bf m}, {\bf n}, \xi^*}^{\alpha,\beta,F}\subseteq X_{{\bf m}, {\bf n}, s, <t}^{\alpha,\beta,F}$. 
If $x_u\in E_{\bf m}^\alpha$, then $x_u\in C_{(2i-1,2i]}^\alpha$ and $F(x_u)\in C_{[2j+1,2j+2)}^\beta$. Therefore $(2i-1,2i)\sim_\xi (2j+1,2j+2)$ 
and $x\in X_{{\bf m}, {\bf n}, \xi}^{\alpha,\beta,F}$.

Finally, we assume that $k=2i+1$ and $l=2j$. Moreover, we first assume that $x\in D_{\bf m}^\alpha$. Then $i\in\Odd_{\bf m}$, $j\in \Even_{\bf n}$ and hence $c_{\bf m}(2i+1)=c_{\bf n}(2j)$. 
If $x_u\in D_{\bf m}^\alpha$, then $x_u\in C_{(2i,2i+1]}^\alpha$ and $F(x_u)\in C_{[2j,2j+1)}^\beta$. Therefore $(2i,2i+1)\sim_\xi (2j,2j+1)$ and $x\in X_{{\bf m}, {\bf n}, \xi}^{\alpha,\beta,F}$. 
If $x_u\in E_{\bf m}^\alpha$, then $x_s\in C_{(2i+1,2i+2)}^\alpha$, $F(x_u)\in C_{(2j-1,2j)}^\beta$ and hence $(2i+1,2i+2)\sim_\xi (2j-1,2j)$. 
By the third case of Lemma \ref{extend unfolding}, we obtain some $\xi^*$ with $(2i,2i+1)\sim_{\xi^*}(2j,2j+1)$  and $\ran{f^*}=\ran{f}\cup\{k\}$. Thus  $x\in X_{{\bf m}, {\bf n}, \xi^*}^{\alpha,\beta,F}\subseteq X_{{\bf m}, {\bf n}, s, <t}^{\alpha,\beta,F}$.

If $x\in E_{\bf m}^\alpha$, then $i\in\Even_{\bf m}$, $j\in \Odd_{\bf n}$ and hence $c_{\bf m}(2i+1)=c_{\bf n}(2j)$. 
If $x_u\in D_{\bf m}^\alpha$, then $x_u\in C_{(2i,2i+1)}^\alpha$, $F(x_s)\in C_{(2j,2j+1)}^\beta$ and hence $(2i,2i+1)\sim_\xi (2j,2j+1)$. 
By the last case of Lemma \ref{extend unfolding}, we obtain some $\xi^*$ with $(2i+1,2i+2)\sim_{\xi^*}(2j,2j+1)$  and $\ran{f^*}=\ran{f}\cup\{k\}$. Hence $x\in X_{{\bf m}, {\bf n}, \xi^*}^{\alpha,\beta,F}\subseteq X_{{\bf m}, {\bf n}, s, <t}^{\alpha,\beta,F}$. 
If $x_u\in E_{\bf m}^\alpha$, then $x_u\in C_{[2i+1,2i+2)}^\alpha$ and $F(x_u)\in C_{(2j-1,2j]}^\beta$. Therefore $(2i+1,2i+2)\sim_\xi (2j-1,2j)$ 
and $x\in X_{{\bf m}, {\bf n}, \xi}^{\alpha,\beta,F}$. 
\end{proof}


\section{Incomparable non-definable sets}  \label{section: incomparable sets of arbitrary complexity}

The main result suggests the question whether it is possible to construct larger families of incomparable subsets of metric spaces of positive dimension. 
The next result shows that this is possible, if we make an additional assumption. 

\begin{theorem} \label{incomparable non-definable sets again}
Suppose that $(X,d)$ is a locally compact metric space of positive dimension. 
Then there is a (definable) injective map that takes sets of reals to subsets of $X$ in such a way that these subsets 
are pairwise incomparable. 
\end{theorem}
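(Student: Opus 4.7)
The plan is to extend the construction from Theorem~\ref{main theorem again} by indexing it over continuum-many positive points of $X$ rather than a single positive point $\alpha$, so that each subset $A\subseteq\RR$ determines a subset $B_A$ of $X$ through a local tail-inequivalent encoding of membership, and then applying the general form of Lemma~\ref{existence of reduction implies tail equivalence} (with $\alpha\neq\beta$) to rule out continuous reductions between $B_A$ and $B_{A'}$ for $A\neq A'$.

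The first step is to use local compactness together with positive dimension to produce a family $\{\alpha_r:r\in\RR\}$ of positive elements of $X$, with radii $r^{\alpha_r}$ shrinking so that each $X^{\alpha_r}$ contains a designated $X^{\alpha_r}$-positive witness $\gamma_r$, and with sufficient separation that the local behaviour around each $\alpha_r$ faithfully reflects only that factor. Concretely, fix a positive $\alpha^*$ with $\overline{X^{\alpha^*}}$ compact; inside this compact set I would iterate the definition of positive element to build a Cantor-like tree of positive points indexed by $2^{\NN}\cong\RR$, choosing the radii at each level of the tree small enough to guarantee the separation one needs. I would then fix, using the perfect family of pairwise $E_{\mathrm{tail}}$-inequivalent sequences with $\Delta$ strictly increasing obtained in the proof of Theorem~\ref{main theorem again}, two disjoint collections $\{{\bf m}_r:r\in\RR\}$ and $\{{\bf n}_r:r\in\RR\}$ such that every two of $\{{\bf m}_r,{\bf n}_r:r\in\RR\}$ are pairwise $E_{\mathrm{tail}}$-inequivalent (after applying $\Delta$).

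For each $A\subseteq\RR$, define
\[
B_A\ =\ \bigcup_{r\in A} D^{\alpha_r}_{{\bf m}_r}\ \cup\ \bigcup_{r\notin A} D^{\alpha_r}_{{\bf n}_r}.
\]
Injectivity of $A\mapsto B_A$ is immediate: reading off, for each $r\in\RR$, which of ${\bf m}_r$ or ${\bf n}_r$ governs the local structure of $B_A$ near $\alpha_r$ recovers $A$. For pairwise incomparability, suppose $A\neq A'$ and pick $r_0\in A\triangle A'$. If $F\colon X\to X$ were continuous with $B_A=F^{-1}[B_{A'}]$, let $r'_0$ be such that $F(\gamma_{r_0})\in X^{\alpha_{r'_0}}$ (no such $r'_0$ would mean $F(\gamma_{r_0})\notin B_{A'}$, contradicting that $\gamma_{r_0}\in B_A$). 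Denote by ${\bf h}_{r_0}\in\{{\bf m}_{r_0},{\bf n}_{r_0}\}$ the sequence encoding $r_0$-membership in $A$, and by ${\bf k}_{r'_0}\in\{{\bf m}_{r'_0},{\bf n}_{r'_0}\}$ the sequence for $r'_0$-membership in $A'$. Then the equation locally asserts $D^{\alpha_{r_0}}_{{\bf h}_{r_0}}\cap Y^{\alpha_{r_0},\alpha_{r'_0}}=F^{-1}[D^{\alpha_{r'_0}}_{{\bf k}_{r'_0}}]\cap Y^{\alpha_{r_0},\alpha_{r'_0}}$, so Lemma~\ref{existence of reduction implies tail equivalence} applies with $\alpha=\alpha_{r_0}$, $\beta=\alpha_{r'_0}$, ${\bf m}={\bf h}_{r_0}$, ${\bf n}={\bf k}_{r'_0}$, $\gamma=\gamma_{r_0}$, and combined with Lemma~\ref{unfoldings with large domain implies tail equivalence} forces $\Delta{\bf h}_{r_0}$ and $\Delta{\bf k}_{r'_0}$ to be $E_{\mathrm{tail}}$-equivalent. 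Pairwise $E_{\mathrm{tail}}$-inequivalence of the chosen family then collapses $r_0=r'_0$ and ${\bf h}_{r_0}={\bf k}_{r_0}$, which means $r_0\in A\Leftrightarrow r_0\in A'$, contradicting $r_0\in A\triangle A'$. The symmetric argument rules out $B_{A'}\leq B_A$.

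I expect the main obstacle to be constructing the family $\{\alpha_r:r\in\RR\}$ with enough separation that the argument around each $\alpha_{r_0}$ sees only the $D^{\alpha_{r_0}}_{{\bf h}_{r_0}}$-contribution, rather than interference from the pieces attached to nearby $\alpha_s$. In a separable space, no continuum-sized family of disjoint open balls exists, so one must either leverage non-separability (not excluded by local compactness) or restrict attention to a neighbourhood of $\gamma_{r_0}$ so small that overlapping $X^{\alpha_s}$ contribute trivially there, in order for the local equation feeding into Lemma~\ref{existence of reduction implies tail equivalence} to hold in the required form. Handling this overlap is the technical crux; the rest of the proof is essentially bookkeeping on top of the machinery already developed in Sections~\ref{section: incomparable Borel sets} and \ref{section: auxiliary results}.
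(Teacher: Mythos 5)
Your approach has a genuine gap that you yourself flagged but did not resolve, and it is fatal in the generality the theorem requires. You want a family $\{\alpha_r : r\in\RR\}$ of continuum-many positive points with radii chosen so that around each $\alpha_{r_0}$ one sees only the contribution of $D^{\alpha_{r_0}}_{{\bf h}_{r_0}}$ to $B_A$. But the theorem applies in particular to $X=\RR$, which is separable: a separable metric space cannot contain continuum-many pairwise disjoint open balls (each would meet a fixed countable dense set in a distinct point). So for most $r$ the balls $X^{\alpha_s}$ must accumulate inside $X^{\alpha_{r_0}}$, and then the local identity
$D^{\alpha_{r_0}}_{{\bf h}_{r_0}}\cap Y^{\alpha_{r_0},\alpha_{r'_0}}=F^{-1}[D^{\alpha_{r'_0}}_{{\bf k}_{r'_0}}]\cap Y^{\alpha_{r_0},\alpha_{r'_0}}$
that you feed into Lemma~\ref{existence of reduction implies tail equivalence} simply fails, because $B_A\cap X^{\alpha_{r_0}}$ is not $D^{\alpha_{r_0}}_{{\bf h}_{r_0}}$ but is polluted by the pieces attached to the infinitely many $\alpha_s$ landing inside $X^{\alpha_{r_0}}$. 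Shrinking the neighbourhood of $\gamma_{r_0}$ does not help: the interference is dense near $\gamma_{r_0}$ no matter how far you shrink.

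The paper's proof avoids this by using only \emph{countably many} separated positive points $\alpha_n$ (so the disjoint-balls issue disappears) together with one extra idea: using the perfect set property on the closure of the positive points inside a pre-compact ball $Y$, it extracts a perfect set $C$ with $C\subseteq\cl(\{\alpha_n\mid n\geq 0\})$ and $\cl(B_{r_n}(\alpha_n))\cap C=\emptyset$. It then tags each $\alpha_n$ with a single sequence ${\bf m}_n$ (pairwise $E_{\mathrm{tail}}$-inequivalent), and encodes $I\subseteq 2^{\NN}$ not by choosing tags but by directly adjoining $h[I]\subseteq C$ to the definition, setting $D_I = h[I]\cup\bigcup_n D_{{\bf m}_n}^{\alpha_n}$. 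The tail-equivalence machinery (which you correctly identify as the engine) then shows that any continuous reduction of $D_I$ to $D_J$ must send some point in $B_{r_1^{\alpha_n}}(\alpha_n)$ into $D^{\alpha_n}_{{\bf m}_n}\subseteq B_{r_n}(\alpha_n)$ for every $n$; since $C$ is in the closure of the $\alpha_n$ and $r_n\to 0$, continuity forces $F{\upharpoonright}C=\mathrm{id}$, hence $h[I]=h[J]$ and $I=J$. This ``tag once, transport the choice set into a perfect set near the tags'' trick is the key idea your proposal is missing; without it, you cannot encode continuum-much information with only countably many disjoint tagging regions.
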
 

\begin{proof} 
We will write 
$\cl(Y)$ for the closure 
of a subset $Y$ of $X$. 
Let $A$ denote the set of non-positive and $B$ the set of positive elements of $X$. 
Since $(X,d)$ has positive dimension, $B$ is nonempty. Let $\gamma\in B$. 
Since $(X,d)$ is moreover locally compact, there is an open ball $Y$ containing $\gamma$ that is pre-compact, i.e. its closure $\cl(Y)$ is compact. In particular, $Y$ is an open subset of a compact metric space and hence itself a Polish space.

It follows from the definition of $A$ that its dimension is $0$. 
If $B\cap Y$ is countable, then $Y$ has dimension $0$ by \cite[Theorem II.2]{MR0006493} 
and this contradicts the choice of $\gamma$. 
Therefore $B\cap Y$ is uncountable. 
Since  
$Y$ is pre-compact, it  
follows from the perfect set property for closed sets \cite[Theorem 6.2]{Kechris_Classical} that there is a perfect subset $C$ of $\cl(B)\cap Y$ 
that is nowhere dense in $\cl(B)\cap Y$. 

\begin{claim*}
There is a sequence ${\bm \alpha}=\langle \alpha_n\mid n\geq 0\rangle$ of distinct elements of $B\cap Y$ and a sequence ${\bf r}=\langle r_n\mid n\geq 0\rangle$ of positive real numbers converging to $0$ with the following properties for all distinct $i,j \in \NN$. 
\begin{enumerate-(a)} 
\item \label{condition a in last theorem} 
$\cl(B_{r_i}(\alpha_i))\subseteq Y\setminus C$. 
\item \label{condition b in last theorem} 
$B_{r_i}(\alpha_i)\cap B_{r_j}(\alpha_j)=\emptyset$. 
\item \label{condition c in last theorem} 
$C\subseteq \cl(\{\alpha_n\mid n\geq 0\})$. 
\end{enumerate-(a)} 
\end{claim*} 

Since the construction of such sequences is immediate, we omit the proof. 
Note that these conditions remain true if ${\bm \alpha}$ is fixed and each $r_n$ is decreased. 
Recall that $r^{\alpha }>0$ was chosen in the beginning of Section \ref{section: incomparable Borel sets} such that $\alpha$ is $B_{r^\alpha}(\alpha)$-positive for any positive $\alpha$ and $X^\alpha=B_{r^\alpha}(\alpha)$. 
Again, this remains true if $\alpha$ is fixed and $r^{\alpha }$ is decreased. 
Therefore, we can assume that $r_n=r^{\alpha_n}$ for all $n\geq 0$.



Moreover, we fix a sequence $\langle {\bf m}_n\mid n\ge 0 \rangle$ of strictly increasing 
sequences ${\bf m}_n$ in $\NN^\NN$ beginning with $0$ such that $\Delta{\bf m}_n$ is strictly increasing 
and moreover, the sequences $\Delta{\bf m}_n$ are pairwise not $\mathrm{E}_{\mathrm{tail}}$-equivalent for different $n\geq 0$. 
We now consider the subsets $D_{{\bf m}_n}^{\alpha_n}$ of $X^{\alpha_n}$ that were given in Definition \ref{definition: blocks indexed by sequences}. 
We further fix a bijection $h\colon 2^{\NN}\rightarrow C$ and for each subset $I$ of $2^\NN$, let 
$$D_I=h[I]\cup\bigcup_{n\geq 0}D_{{\bf m}_n}^{\alpha_n}.$$


For the following claims, we fix some $n\geq 0$. Let $A^*$ denote the set of non-$X^{\alpha_n}$-positive and $B^*$ the set of $X^{\alpha_n}$-positive elements of $X^{\alpha_n}$. 
Then $A^*$ is an open and $B^*$ a relatively closed subset of $X^{\alpha_n}$ and moreover $\alpha_n\in B^*$. 




\begin{claim*} 
Suppose that $0<s<r_n$ and $U$ is a subset of $B^*\cap B_s(\alpha_n)$ with $\alpha_n\in U$ that is open and closed in $B^*\cap B_s(\alpha_n)$. 
Then for any $r<s$, there is some $x\in U$ with $d(\alpha_n,x)=r$. 
\end{claim*} 
\begin{proof} 
The claim holds for $r=0$, since $\alpha_n\in U$. 
Assuming that the claim fails for some $r$ with $0<r<s$, let 
$\bar{B}_r(\alpha_n)=\{x\in X\mid d(\alpha_n,x)\leq r\}$ denote the closed ball of radius $r$ around $x_n$ in $X$ and 
$V=U\cap B_r(\alpha_n)=U\cap \bar{B}_r(\alpha_n)$. 
Then $V$ is a subset of $B^*$ with $\alpha_n\in V$ that is open and closed in $B^*$ and the values $d(\alpha_n,x)$ for $x\in V$ are bounded strictly below $s$. 
Thus it is sufficient to show that there is no such set. 

We choose some $\epsilon>0$ with $d(\alpha_n,x)<s-\epsilon$ for all $x\in V$. 
We further claim that there is some $\delta$ with $0<\delta\leq\epsilon$ and $d(x,y)>\delta$ 
for all $(x,y)\in V\times (B^*\setminus V)$. 
Assuming that this fails, it follows from the fact that $Y$ is pre-compact that there exist two sequences in $V$ and $B^*\setminus V$ that both converge to the same $y\in \cl(Y)$. Since $B^*$ is closed in $X^{\alpha_n}$ and $d(\alpha_n,y)<r_n$, we have $y\in B^*$. 
But this contradicts the fact that $V$ is open and closed in $B^*$. 


We fix some such $\delta$ and let $S_{\sim}=\{x\in B_{s}(\alpha_n)\mid d(x,V)\sim \delta\}$ for $\sim\in \{=,\leq,<\}$. 
Since $S_=$, $S_\leq$ are closed subsets of $B_s(\alpha_n)$ and $d(\alpha_n,x)\leq s-\epsilon$ for all $x\in S_\leq$ by the choice of $\epsilon$ and $\delta$, these sets are closed in $\cl(Y)$ and in $X$. 

By the definition of $A^*$, 
there is a family $\langle A_k\mid k\in K\rangle$ of sets 
that are both open and closed in $X$ with $A^*=\bigcup_{k\in K} A_k$. 
Since $S_=$ is a closed subset of the compact set $\cl(Y)$, it is itself compact. 
Since $S_=$ is disjoint from $B^*$ by the choice of $\delta$ and is thus a subset of $A^*$, there is a finite set $L\subseteq K$ with $S_=\subseteq \bigcup_{l\in L} A_l$. 
Let $W=S_{\leq}\setminus \bigcup_{l\in L} A_l=S_{<}\setminus \bigcup_{l\in L} A_l$. 
Since the sets $A_l$ are open and closed, $S_\leq$ is closed and $S_<$ is open in $X$, we have that $W$ is a subset of $B_{s}(\alpha_n)$ that contains $\alpha_n$ and is open and closed in $X$. 
However, this contradicts the fact that $\alpha_n\in B^*$. 
\end{proof}

It remains to show that $D_I$, $D_J$ are incomparable whenever  $I$, $J$ are distinct subsets of $2^\NN$. 
Towards a contradiction, assume that $F\colon X\rightarrow X$ is a continuous map with $D_I=F^{-1}[D_J]$. 
Let $B_*=B^*\cap B_{r_1^{\alpha_n}}(\alpha_n)$.

\begin{claim*} 
There is some $x\in B_*$ with $F(x)\notin C$. 
\end{claim*} 
\begin{proof} 
Towards a contradiction, we assume that $\ran{F{\upharpoonright}B_*}\subseteq C$. 
It follows that for any subset $V$ of $C$ with $F(\alpha_n)\in V$ that is open and closed in $C$, its preimage $U=(F{\upharpoonright}B_*)^{-1}[V]$ contains $\alpha_n$ and is 
open and closed in $B_*$. 
By the previous claim applied to $s=r_1^{\alpha_n}$, 
for any $r<r_1^{\alpha_n}$ there is some $x\in U$ with $d(\alpha_n,x)=r$. 

We now apply this statement to arbitrarily small neighborhoods $V$ of $F(\alpha_n)$ in $C$ that are open and closed in $C$ and to arbitrarily large $r<r_1^{\alpha}$ to obtain a sequence ${\bf x}=\langle x_i\mid i\geq 0\rangle$ of elements of $U$ as above. 
Since $Y$ is pre-compact,  
${\bf x}$ has a subsequence converging to some $x\in \cl(Y)$ with $d(\alpha_n,x)=r_1^{\alpha_n}$ and $F(x)=F(\alpha_n)$.

Since ${\bf m}_n$ begins with $0$, we have $0\in \Even_{{\bf m}_n}$ by Definition \ref{definition: even and odd blocks} and thus the first block of $D_{{\bf m}_n}^{\alpha_n}$ is $C_{[0,1)}^{\alpha_n}=B_{r_1^{\alpha_n}}(\alpha_n)$ by Definition \ref{definition: blocks indexed by sequences}. 
Since $d(\alpha_n,x)=r_1^{\alpha_n}$, this implies that $x\notin D_{{\bf m}_n}^{\alpha_n}$ and $x\notin D_I$ by the definition of $D_I$ before the second claim and the conditions \ref{condition a in last theorem} and \ref{condition b in last theorem} in the first claim. 
Since moreover $\alpha_n\in D_{{\bf m}_n}^{\alpha_n}\subseteq D_I$ and $F(x)=F(\alpha_n)$ by the choice of $x$, this contradicts the assumption that $F$ is a reduction of $D_I$ to $D_J$. 
\end{proof}

We now fix some $x\in B_*$ as in the last claim. 
Since $x\in B_*=B^*\cap B_{r_1^{\alpha_n}}\subseteq B_{r_1^{\alpha_n}} \subseteq D_{{\bf m}_n}^{\alpha_n}\subseteq D_I= F^{-1}[D_J]$, we have $F(x)\in D_J$. 
Since moreover $F(x)\notin C$ by the choice of $x$, there is some $k\in\NN$ with $F(x)\in D^{\alpha_k}_{{\bf m}_k}$ by the definition of $D_J$.


\begin{claim*} 
$\Delta {\bf m}_n$, $\Delta {\bf m}_k$ are $\mathrm{E}_{\mathrm{tail}}$-equivalent. 
\end{claim*} 
\begin{proof} 
Let $Y^{\alpha_n,\alpha_k}=X^{\alpha_n}\cap F^{-1}[X^{\alpha_k}]$ as given before Definition \ref{definition compatible}. 
We have $D_{{\bf m}_n}^{\alpha_n}\cap X^{\alpha_n}=D_I\cap X^{\alpha_n}$ and $D_{{\bf m}_k}^{\alpha_k}\cap X^{\alpha_k}=D_J\cap X^{\alpha_k}$ by the definition of $D_I$, $D_J$ and condition \ref{condition a in last theorem} in the first claim. 
Moreover, we made the assumption that $D_I=F^{-1}[D_J]$. 
Hence $D_{{\bf m}_n}^{\alpha_n}\cap Y^{\alpha_n,\alpha_k}= D_I\cap Y^{\alpha_n,\alpha_k}=F^{-1}[D_J] \cap Y^{\alpha_n,\alpha_k}=F^{-1}[D_J\cap X^{\alpha_k}] \cap Y^{\alpha_n,\alpha_k}=F^{-1}[D_{{\bf m}_n}^{\alpha_n}\cap X^{\alpha_k}] \cap Y^{\alpha_n,\alpha_k}=F^{-1}[D_{{\bf m}_n}^{\alpha_n}] \cap Y^{\alpha_n,\alpha_k}$. 
In particular, the conditions stated before Definition \ref{definition compatible} hold for ${\bf m}_n$, ${\bf m}_k$, $\alpha_n$, $\alpha_k$, $F$. 
We further have $x\in B_{r_1^{\alpha_n}}(\alpha_n)$, $F(x)\in D^{\alpha_k}_{{\bf m}_k}\subseteq X^{\alpha_k}$ and $x\in B^*$ by the choice of $x$, $k$ before this claim. These facts show that Lemma \ref{existence of reduction implies tail equivalence} can be applied to the parameters $\alpha=\alpha_n$, $\beta=\alpha_k$, ${\bf m}={\bf m}_n$, ${\bf n}={\bf m}_k$, $\gamma=x$ and $F$. 
The claim thus follows from Lemmas \ref{existence of reduction implies tail equivalence} and \ref{unfoldings with large domain implies tail equivalence}. 
\end{proof}


The last claim implies that $n=k$ by the definition of $\Delta {\bf m}_n$, $\Delta {\bf m}_k$. 
Thus the last two claims show that for every $n\geq 0$, there is some $x\in B_{r_1^{\alpha_n}}(\alpha_n)\subseteq B_{r_{n}}(\alpha_n)$ with $F(x)\in D^{\alpha_n}_{{\bf m}_n} \subseteq B_{r_{n}}(\alpha_n)$. 
Since $C\subseteq \cl(\{\alpha_n\mid n\geq 0\})$ by condition \ref{condition c in last theorem} in the first claim and ${\bf r}=\langle r_n\mid n\geq 0\rangle$ converges to $0$, it follows that $F{\upharpoonright} C=\mathrm{id}{\upharpoonright} C$. 
We further have $D_I\cap C=h[I]$ and $D_J\cap C=h[J]$ by the definition of $D_I$, $D_J$. 
Since we assumed that $D_I=F^{-1} [D_J]$, it follows that $h[I]=D_I\cap C=F^{-1} [D_J]\cap C= D_J\cap C=h[J]$. 
Since  $h\colon 2^{\NN}\rightarrow C$ is a bijection, this implies that $I=J$, but this contradicts our assumption that $I$, $J$ are distinct. 
\end{proof}


\section{Further remarks} \label{section: further remarks}

We state a few further observations about the main theorem.

\begin{remark} 
It follows from Theorem \ref{main theorem} that there are totally disconnected Polish spaces with uncountably many incomparable Borel subsets, for instance the complete Erd\"os space (see \cite{MR2488452}). 
\end{remark}


By Urysohn's metrization theorem, the conclusion of the main theorem holds for countably based regular Hausdorff spaces, but fails without this requirement 
by the next remark. 

\begin{remark} 
The conclusion of Theorem \ref{main theorem} fails for countable $T_0$ spaces by \cite[Remark 5.35]{MR3417077}. 
\end{remark}

Moreover, the conclusion of the main theorem 
is optimal in the sense that the next remark prevents further embedding results, unless additional properties of the space are assumed. 

\begin{remark} 
There is a compact connected subspace of $X$ of $\mathbb{R}^3$ such that any two nonempty subsets 
that are 
not equal to $X$ are incomparable 
\cite[Theorem 11]{MR0220249} (see the remark after Theorem 5.15 in \cite{MR3417077}). 
\end{remark} 

We finally remark that the construction in the proof of the main theorem can also be used to prove other embedding results.

\begin{remark} 
If there is a partition of a metric space $(X,d)$ into infinitely many subspaces of positive dimension that are both open and closed, then $(\mathcal{P}(\NN),\subseteq)$ embeds into the Wadge quasi-order on the collection of Borel subsets of $(X,d)$. This can be proved by applying the construction in the proof of the main theorem to the subspaces.  
\end{remark}

\section{Questions}

We conclude with some open questions. 
Since the sets $D_{\bf n}$ defined in the proof of the main theorem 
are intersections of open and closed sets, this suggests the following question. 

\begin{question} 
Does the conclusion of Theorem \ref{main theorem} 
hold for sets $A$ such that both $A$ and its complement is an intersection of an open and a closed set?  
\end{question} 

It is further open whether it is necessary for the proof of the main theorem to assume that the space is a metric space. 

\begin{question} 
Does the conclusion of Theorem \ref{main theorem} hold for all 
regular spaces of positive dimension? 
\end{question} 

Moreover, it is open whether local compactness can be omitted 
in the construction of incomparable non-definable sets in Section \ref{section: incomparable sets of arbitrary complexity}. 

\begin{question} 
Does the conclusion  of Theorem \ref{incomparable non-definable sets} hold for all Polish spaces of positive dimension? 
\end{question} 


In a different direction, it would be interesting to consider similar problems for functions on arbitrary metric spaces 
(see e.g. \cite{MR3145199, MR1887681}).

\bibliographystyle{alpha}
\bibliography{references}

\def\polhk#1{\setbox0=\hbox{#1}{\ooalign{\hidewidth
  \lower1.5ex\hbox{`}\hidewidth\crcr\unhbox0}}}
\begin{thebibliography}{MRSS15}

\bibitem[AM03]{MR1992531}
Alessandro Andretta and Donald~A. Martin.
\newblock Borel-{W}adge degrees.
\newblock {\em Fund. Math.}, 177(2):175--192, 2003.

\bibitem[And06]{MR2279651}
Alessandro Andretta.
\newblock More on {W}adge determinacy.
\newblock {\em Ann. Pure Appl. Logic}, 144(1-3):2--32, 2006.

\bibitem[And07]{Andretta_SLO}
Alessandro Andretta.
\newblock The {SLO} principle and the {W}adge hierarchy.
\newblock In {\em Foundations of the formal sciences {V}}, volume~11 of {\em
  Stud. Log. (Lond.)}, pages 1--38. Coll. Publ., London, 2007.

\bibitem[Car13]{MR3145199}
Rapha{\"e}l Carroy.
\newblock A quasi-order on continuous functions.
\newblock {\em J. Symbolic Logic}, 78(2):633--648, 2013.

\bibitem[Coo67]{MR0220249}
H.~Cook.
\newblock Continua which admit only the identity mapping onto non-degenerate
  subcontinua.
\newblock {\em Fund. Math.}, 60:241--249, 1967.

\bibitem[CP14]{MR3268712}
Rapha{\"e}l Carroy and Yann Pequignot.
\newblock From well to better, the space of ideals.
\newblock {\em Fund. Math.}, 227(3):247--270, 2014.

\bibitem[DvM09]{MR2488452}
Jan~J. Dijkstra and Jan van Mill.
\newblock Characterizing complete {E}rd{\H o}s space.
\newblock {\em Canad. J. Math.}, 61(1):124--140, 2009.

\bibitem[Ele02]{MR1887681}
M.~Elekes.
\newblock Linearly ordered families of {B}aire 1 functions.
\newblock {\em Real Anal. Exchange}, 27(1):49--63, 2001/02.

\bibitem[Eng89]{MR1039321}
Ryszard Engelking.
\newblock {\em General topology}, volume~6 of {\em Sigma Series in Pure
  Mathematics}.
\newblock Heldermann Verlag, Berlin, second edition, 1989.
\newblock Translated from the Polish by the author.

\bibitem[HW41]{MR0006493}
Witold Hurewicz and Henry Wallman.
\newblock {\em Dimension {T}heory}.
\newblock Princeton Mathematical Series, v. 4. Princeton University Press,
  Princeton, N. J., 1941.

\bibitem[Jec03]{Jech}
Thomas Jech.
\newblock {\em Set theory}.
\newblock Springer Monographs in Mathematics. Springer-Verlag, Berlin, 2003.
\newblock The third millennium edition, revised and expanded.

\bibitem[Kec95]{Kechris_Classical}
Alexander~S. Kechris.
\newblock {\em Classical descriptive set theory}, volume 156 of {\em Graduate
  Texts in Mathematics}.
\newblock Springer-Verlag, New York, 1995.

\bibitem[Lou83]{MR730585}
A.~Louveau.
\newblock Some results in the {W}adge hierarchy of {B}orel sets.
\newblock In {\em Cabal seminar 79--81}, volume 1019 of {\em Lecture Notes in
  Math.}, pages 28--55. Springer, Berlin, 1983.

\bibitem[MR09]{MR2499419}
Luca Motto~Ros.
\newblock Borel-amenable reducibilities for sets of reals.
\newblock {\em J. Symbolic Logic}, 74(1):27--49, 2009.

\bibitem[MR10a]{MR2605897}
Luca Motto~Ros.
\newblock Baire reductions and good {B}orel reducibilities.
\newblock {\em J. Symbolic Logic}, 75(1):323--345, 2010.

\bibitem[MR10b]{MR2601013}
Luca Motto~Ros.
\newblock Beyond {B}orel-amenability: scales and superamenable reducibilities.
\newblock {\em Ann. Pure Appl. Logic}, 161(7):829--836, 2010.

\bibitem[MR14]{MR3164747}
Luca Motto~Ros.
\newblock Bad {W}adge-like reducibilities on the {B}aire space.
\newblock {\em Fund. Math.}, 224(1):67--95, 2014.

\bibitem[MRS14]{MR3308051}
Luca Motto~Ros and Philipp Schlicht.
\newblock Lipschitz and uniformly continuous reducibilities on ultrametric
  {P}olish spaces.
\newblock In {\em Logic, computation, hierarchies}, volume~4 of {\em Ontos
  Math. Log.}, pages 213--258. De Gruyter, Berlin, 2014.

\bibitem[MRSS15]{MR3417077}
Luca Motto~Ros, Philipp Schlicht, and Victor Selivanov.
\newblock Wadge-like reducibilities on arbitrary quasi-{P}olish spaces.
\newblock {\em Math. Structures Comput. Sci.}, 25(8):1705--1754, 2015.

\bibitem[Peq15]{MR3372615}
Yann Pequignot.
\newblock A {W}adge hierarchy for second countable spaces.
\newblock {\em Arch. Math. Logic}, 54(5-6):659--683, 2015.

\bibitem[Wad12]{MR2906999}
William~W. Wadge.
\newblock Early investigations of the degrees of {B}orel sets.
\newblock In {\em Wadge degrees and projective ordinals. {T}he {C}abal
  {S}eminar. {V}olume {II}}, volume~37 of {\em Lect. Notes Log.}, pages
  166--195. Assoc. Symbol. Logic, La Jolla, CA, 2012.

\bibitem[Woo10]{Woodin_2010}
W.~Hugh Woodin.
\newblock {\em The axiom of determinacy, forcing axioms, and the nonstationary
  ideal}, volume~1 of {\em de Gruyter Series in Logic and its Applications}.
\newblock Walter de Gruyter GmbH \& Co. KG, Berlin, revised edition, 2010.

\end{thebibliography}

\end{document}